\documentclass{amsart}

\usepackage{graphicx}
\usepackage{amssymb}
\usepackage{amscd}
\usepackage{latexsym}
\usepackage{amsfonts}
\usepackage{ragged2e}
\usepackage{amsmath}
\usepackage{float}
\usepackage[latin1]{inputenc}
\usepackage[english]{babel}



\newtheorem{proposition}{Proposition}
\newtheorem{lemma}{Lemma}
\newtheorem{theorem}{Theorem}
\newtheorem{definition}{Definition}
\newtheorem{remark}{Remark}
\newtheorem{conjecture}{Conjecture}




\newcommand\zkn[1][k]{Z_{#1}^{(n)}}
\newcommand\zk[1][k]{Z_{#1}^{(2)}}
\DeclareMathOperator{\SU}{SU} \DeclareMathOperator\sll{sl}
\newcommand{\im}{\mathop{\fam0 Im}\nolimits}

\newcommand{\tr}{\mathop{\fam0 Tr}\nolimits}

\newcommand{\rank}{\mathop{\fam0 Rank}\nolimits}

\newcommand{\Hom}{\mathop{\fam0 Hom}\nolimits}

\newcommand{\End}{\mathop{\fam0 End}\nolimits}

\newcommand{\id}{\mathop{\fam0 Id}\nolimits}

\newcommand{\Aut}{\mathop{\fam0 Aut}\nolimits}

\newcommand{\Id}{\mathop{\fam0 Id}\nolimits}

\newcommand{\bC}{{\mathbb C}}
\newcommand{\bR}{{\mathbb R}}
\newcommand{\bT}{{\bar T}}
\newcommand{\C}{C}

\newcommand{\Z}{{\mathbb Z}}
\newcommand{\bZ}{\Z{}}

\newcommand{\D}{{\mathcal D}}
\newcommand{\ra}{\mathop{\fam0 \rightarrow}\nolimits}

\newcommand{\dbar}{ \bar \partial}

\newcommand{\cH}{ {\mathcal H}}

\newcommand{\T}{ {\mathcal T}}

\newcommand{\V}{ {\mathcal V}}
\renewcommand{\L}{{\mathcal L}}

\renewcommand{\P}{ {\mathbb P}}

\newcommand{\tD}{ {\tilde D}}
\newcommand{\tG}{ {\tilde G}}
\newcommand{\tW}{ {\tilde W}}
\newcommand{\s}{\sigma}

\newcommand{\Nabla}{{\mathbf {\hat \nabla}}}
\newcommand{\Nablat}{{\mathbf {\hat \nabla}}^{t}}
\newcommand{\Nablae}{{\mathbf {\hat \nabla}}^e}
\newcommand{\Nablaet}{{\mathbf {\hat \nabla}}^{e,t}}
\newcommand{\BTstar}{\star^{\text{\tiny BT}}}
\newcommand{\tBTstar}{{\tilde \star}^{\text{\tiny BT}}}


\newcommand{\Teim}{Teichm{\"u}ller }
\newcommand{\Ric}{\mathop{\fam0 Ric}\nolimits}

\begin{document}

\title[Hitchin's connection, Toeplitz Operators and curve operators]
{Hitchin's projectively flat connection, Toeplitz operators and the
  asymptotic expansion of TQFT curve operators}

\author{J{\o}rgen Ellegaard Andersen} \address{Center for the Topology
  and Quantization of Moduli
  Spaces\\ Department of Mathematics\\
  University of Aarhus\\
  DK-8000, Denmark}

\email{andersen@imf.au.dk}

\author{Niels Leth Gammelgaard} \address{Center for the Topology and
  Quantization of Moduli
  Spaces\\ Department of Mathematics\\
  University of Aarhus\\
  DK-8000, Denmark}

\email{nlg@imf.au.dk}

\maketitle

\begin{abstract}
  In this paper, we will provide a review of the geometric
  construction, proposed by Witten, of the $\SU(n)$ quantum
  representations of the mapping class groups which are part of the
  Reshetikhin-Turaev TQFT for the quantum group $U_q(sl(n,\bC))$. In
  particular, we recall the differential geometric construction of
  Hitchin's projectively flat connection in the bundle over
  Teichmüller space obtained by push-forward of the determinant line
  bundle over the moduli space of rank $n$, fixed determinant,
  semi-stable bundles fibering over Teichmüller space. We recall the
  relation between the Hitchin connection and Toeplitz operators which
  was first used by the first named author to prove the asymptotic
  faithfulness of the $\SU(n)$ quantum representations of the mapping
  class groups.  We further review the construction of the formal
  Hitchin connection, and we discuss its relation to the full
  asymptotic expansion of the curve operators of Topological Quantum
  Field Theory. We then go on to identifying the first terms in the
  formal parallel transport of the Hitchin connection explicitly. This
  allows us to identify the first terms in the resulting star product
  on functions on the moduli space. This is seen to agree with the
  first term in the star-product on holonomy functions on these moduli
  spaces defined by Andersen, Mattes and Reshetikhin.
\end{abstract}

\section{Introduction}

Witten constructed, via path integral techniques, a quantization of
Chern-Simons theory in $2+1$ dimensions, and he argued in \cite{W1}
that this produced a TQFT, indexed by a compact simple Lie group and
an integer level $k$. For the group $\SU(n)$ and level $k$, let us
denote this TQFT by $\zkn$. Witten argues in \cite{W1} that the theory
$\zk$ determines the Jones polynomial of a knot in $S^3$.
Combinatorially, this theory was first constructed by Reshetikhin and
Turaev, using representation theory of $U_q(\sll(n,\C))$ at
$q=e^{(2\pi i)/(k+n)}$, in \cite{RT1} and \cite{RT2}. Subsequently,
the TQFT's $\zkn$ were constructed using skein theory by Blanchet,
Habegger, Masbaum and Vogel in \cite{BHMV1}, \cite{BHMV2} and
\cite{B1}.

The two-dimensional part of the TQFT $\zkn$ is a modular functor with
a certain label set. For this TQFT, the label set $\Lambda_k^{(n)}$ is
a finite subset (depending on $k$) of the set of finite dimensional
irreducible representations of $\SU(n)$. We use the usual labeling of
irreducible representations by Young diagrams, so in particular
$\Box\in \Lambda_k^{(n)}$ is the defining representation of
$\SU(n)$. Let further $\lambda_0^{(d)} \in \Lambda_k^{(n)}$ be the
Young diagram consisting of $d$ columns of length $k$. The label set
is also equipped with an involution, which is simply induced by taking
the dual representation. The trivial representation is a special
element in the label set which is clearly preserved by the involution.

\begin{align*}
  \zkn: \quad \left\{\parbox{3cm}{\RaggedRight Category of (extended)
      closed oriented surfaces with $\Lambda^{(n)}_k$-labeled marked
      points with projective tangent vectors} \right\} \ \to \
  \left\{\parbox{3cm}{\RaggedRight Category of finite dimensional
      vector spaces over $\bC$} \right\}
\end{align*}

The three-dimensional part of $\zkn$ is an association of a vector,
$$\zkn(M,L,\lambda)\in \zkn(\partial M, \partial L, \partial \lambda),$$
to any compact, oriented, framed $3$--manifold $M$ together with an
oriented, framed link $(L,\partial L)\subseteq (M,\partial M)$ and a
$\Lambda_k^{(n)}$-labeling $\lambda : \pi_0(L) \ra \Lambda_k^{(n)}$.

\begin{figure}[H]
  \centering
  \includegraphics[scale=0.8]{slide-figs.2}
\end{figure}

This association has to satisfy the Atiyah-Segal-Witten TQFT axioms
(see e.g. \cite{At}, \cite{Segal} and \cite{W1}). For a more
comprehensive presentation of the axioms, see Turaev's book \cite{T}.

The geometric construction of these TQFTs was proposed by Witten in
\cite{W1} where he derived, via the Hamiltonian approach to quantum
Chern-Simons theory, that the geometric quantization of the moduli
spaces of flat connections should give the two-dimensional part of the
theory. Further, he proposed an alternative construction of the
two-dimensional part of the theory via WZW-conformal field
theory. This theory has been studied intensively. In particular, the
work of Tsuchiya, Ueno and Yamada in \cite{TUY} provided the major
geometric constructions and results needed. In \cite{BK}, their
results were used to show that the category of integrable highest
weight modules of level $k$ for the affine Lie algebra associated to
any simple Lie algebra is a modular tensor category. Further, in
\cite{BK}, this result is combined with the work of Kazhdan and
Lusztig \cite{KL} and the work of Finkelberg \cite{Fi} to argue that
this category is isomorphic to the modular tensor category associated
to the corresponding quantum group, from which Reshetikhin and Turaev
constructed their TQFT. Unfortunately, these results do not allow one
to conclude the validity of the geometric constructions of the
two-dimensional part of the TQFT proposed by Witten.  However, in
joint work with Ueno, \cite{AU1}, \cite{AU2}, \cite{AU3} and
\cite{AU4}, we have given a proof, based mainly on the results of
\cite{TUY}, that the TUY-construction of the WZW-conformal field
theory, after twist by a fractional power of an abelian theory,
satisfies all the axioms of a modular functor.  Furthermore, we have
proved that the full $2+1$-dimensional TQFT resulting from this is
isomorphic to the aforementioned one, constructed by BHMV via skein
theory. Combining this with the theorem of Laszlo \cite{La1}, which
identifies (projectively) the representations of the mapping class
groups obtained from the geometric quantization of the moduli space of
flat connections with the ones obtained from the TUY-constructions,
one gets a proof of the validity of the construction proposed by
Witten in \cite{W1}.

Part of this TQFT is the quantum $\SU(n)$ representations of the
mapping class groups. Namely, if $\Sigma$ is a closed oriented
surfaces of genus $g$, $\Gamma$ is the mapping class group of
$\Sigma$, and $p$ is a point on $\Sigma$, then the modular functor
induces a representation
\begin{equation}\label{rep}
  Z^{(n,d)}_{k} : \Gamma \to \P\Aut\bigl(\zkn (\Sigma, p, \lambda_0^{(d)})\bigr).
\end{equation}
For a general label of $p$, we would need to choose a projective
tangent vector $v_p\in T_p\Sigma/\bR_+$, and we would get a
representation of the mapping class group of $(\Sigma,p, v_p)$.  But
for the special labels $\lambda_0^{(d)}$, the dependence on $v_p$ is
trivial and in fact we get a representation of $\Gamma$.  Furthermore,
the curve operators are also part of any TQFT: For
$\gamma\subseteq\Sigma-\{p\}$ an oriented simple closed curve and any
$\lambda\in\Lambda_k^{(n)}$, we have the operators
\begin{equation}\label{co}
  Z^{(n,d)}_{k}  (\gamma,\lambda): \zkn (\Sigma, p, \lambda_0^{(d)}) \to
  \zkn (\Sigma, p, \lambda_0^{(d)}),
\end{equation}
defined as
\[Z^{(n,d)}_{k} (\gamma,\lambda) = Z^{(n,d)}_{k} (\Sigma \times I,
\gamma\times \{\frac12\} \coprod \{p\}\times I,
\{\lambda,\lambda^{(d)}_0\}). \]
\begin{figure}[H]
  \centering
  \includegraphics{slide-figsv2.1}
\end{figure}
The curve operators are natural under the action of the mapping class
group, meaning that following diagram,

\begin{equation*}
  \begin{CD}
    \zkn (\Sigma, p, \lambda_0^{(d)}) @>Z^{(n,d)}_{k}
    (\gamma,\lambda)>> \zkn (\Sigma, p, \lambda_0^{(d)}) \\ @V
    Z^{(n,d)}_{k}(\phi) VV @VV Z^{(n,d)}_{k}(\phi) V\\
    \zkn (\Sigma, p, \lambda_0^{(d)}) @>Z^{(n,d)}_{k}
    (\phi(\gamma),\lambda)>> \zkn (\Sigma, p, \lambda_0^{(d)}),
  \end{CD}
\end{equation*}
is commutative for all $\phi\in\Gamma$ and all labeled simple closed
curves $(\gamma,\lambda)\subset \Sigma-\{p\}$.

For the curve operators, we can derive an explicit formula using
factorization: Let $\Sigma'$ be the surface obtained from cutting
$\Sigma$ along $\gamma$ and identifying the two boundary components to
two points, say $\{p_+,p_-\}$. Here $p_+$ is the point corresponding
to the "left" side of $\gamma$. For any label $\mu\in
\Lambda_k^{(n)}$, we get a labeling of the ordered points $(p_+,p_-)$
by the ordered pair of labels $(\mu,\mu^\dagger)$.

Since $\zkn$ is also a modular functor, one can factor the space
$\zkn(\Sigma, p, \lambda_0^{(d)})$ as a direct sum, 'along' $\gamma$,
over $\Lambda_k^{(n)}$. That is, we get an isomorphism

\begin{equation}
  \zkn(\Sigma, p, \lambda_0^{(d)}) \cong \bigoplus_{\mu\in
    \Lambda_k^{(n)}} Z^{(k)}(\Sigma',
  p_+,p_-,p,\mu,\mu^\dagger,\lambda_0^{(d)} ).\label{Fact}
\end{equation}
Strictly speaking, we need a base point on $\gamma$ to induce tangent
directions at $p_\pm$. However, the corresponding subspaces of
$Z^{(k)}(\Sigma, p, \lambda_0^{(d)})$ do not depend on the choice of
base point.  The isomorphism (\ref{Fact}) induces an isomorphism
\begin{equation*}
  \End(Z^{(k)}(\Sigma, p, \lambda_0^{(d)})) \cong \bigoplus_{\mu\in
    \Lambda_k^{(n)}} \End(Z^{(k)}(\Sigma',
  p_+,p_-,p,\mu,\mu^\dagger,\lambda_0^{(d)})),
\end{equation*}
which also induces a direct sum decomposition of $\End(Z^{(k)}(\Sigma,
p, \lambda_0^{(d)}))$, independent of the base point.

The TQFT axioms imply that the curve operator
$Z^{(k)}(\gamma,\lambda)$ is diagonal with respect to this direct sum
decomposition along $\gamma$. One has the formula
\[Z^{(k)}(\gamma,\lambda) = \bigoplus_{\mu\in \Lambda_k^{(n)}}
S_{\lambda,\mu} (S_{0,\mu})^{-1}\Id_{Z^{(k)}(\Sigma',
  p_+,p_-,p,\mu,\mu^\dagger,\lambda_0^{(d)})}.\] Here $
S_{\lambda,\mu}$ is the $S$-matrix\footnote{The $S$-matrix is
  determined by the isomorphism that a modular functor induces from
  two different ways of glueing an annalus to obtain a torus. For its
  definition, see e.g. \cite{MS}, \cite{Segal}, \cite{Walker} or
  \cite{BK} and references in there. It is also discussed in
  \cite{AU3}.} of the theory $\zkn$. See e.g. \cite{B1} for a
derivation of this formula.

Let us now briefly recall the geometric construction of the
representations $Z^{(n,d)}_k$ of the mapping class group, as proposed
by Witten, using geometric quantization of moduli spaces.

We assume from now on that the genus of the closed oriented surface
$\Sigma$ is at least two. Let $M$ be the moduli space of flat $\SU(n)$
connections on $\Sigma - p$ with holonomy around $p$ equal to
$\exp(2\pi i d/n)\Id \in \SU(n)$. When $(n,d)$ are coprime, the moduli
space is smooth. In all cases, the smooth part of the moduli space has
a natural symplectic structure $\omega$.  There is a natural smooth
symplectic action of the mapping class group $\Gamma$ of $\Sigma$ on
$M$. Moreover, there is a unique prequantum line bundle $(\L,\nabla,
(\cdot,\cdot))$ over $(M,\omega)$. The Teichm\"{u}ller space $\T$ of
complex structures on $\Sigma$ naturally, and $\Gamma$-equivariantly,
parametrizes K\"{a}hler structures on $(M,\omega)$. For $\sigma\in \T
$, we denote by $M_\sigma$ the manifold $(M,\omega)$ with its
corresponding K\"{a}hler structure.  The complex structure on
$M_\sigma$ and the connection $\nabla$ in $\L$ induce the structure of
a holomorphic line bundle on $\L$. This holomorphic line bundle is
simply the determinant line bundle over the moduli space, and it is an
ample generator of the Picard group \cite{DN}.

By applying geometric quantization to the moduli space $M$, one gets,
for any positive integer $k$, a certain finite rank bundle over \Teim
space $\T$ which we will call the {\em Verlinde bundle} $\V_k$ at
level $k$. The fiber of this bundle over a point $\sigma\in \T$ is
$\V_{k,\sigma} = H^0(M_\sigma,\L^k)$. We observe that there is a
natural Hermitian structure $\langle\cdot,\cdot\rangle$ on
$H^0(M_\sigma,\L^k)$ by restricting the $L_2$-inner product on global
$L_2$ sections of $\L^k$ to $H^0(M_\sigma,\L^k)$.

The main result pertaining to this bundle is:
\begin{theorem}[Axelrod, Della Pietra and Witten;
  Hitchin]\label{projflat}
  The projectivization of the bundle $\V_k$ supports a natural flat
  $\Gamma$-invariant connection $\Nabla$.
\end{theorem}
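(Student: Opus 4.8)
The plan is to realize $\Nabla$ as a correction of the trivial connection on an ambient infinite-rank bundle. Since the fiber $\V_{k,\sigma}=H^0(M_\sigma,\L^k)$ is a subspace of the $\sigma$-independent space $C^\infty(M,\L^k)$, I would first form the trivial bundle $\tcH$ over $\T$ with this fixed fiber, equipped with its trivial connection $\nabla^{\mathrm{t}}$, and regard $\V_k\subset\tcH$ as the subbundle of holomorphic sections, i.e. $\V_{k,\s}=\ker\dbar_\s$. The trivial connection does not preserve this subbundle: differentiating a holomorphic section along a tangent vector $V$ to $\T$ picks up the variation $V[\dbar_\s]$ of the Dolbeault operator, which is nonzero precisely because the complex structure on $M$ moves. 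Hence I would look for a connection of the form $\Nabla_V=\nabla^{\mathrm{t}}_V+u(V)$, where $u(V)$ is a fiberwise differential operator chosen to cancel this variation on holomorphic sections.

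The choice of $u(V)$ is dictated by the geometry of the moving K\"ahler structures. Writing $V[J]$ for the derivative of the complex structure $J=J_\s$ along $V$, its $(0,1)$-to-$(1,0)$ part, raised by $\omega$, yields a symmetric holomorphic bivector field $G(V)\in C^\infty(M_\s,S^2(T^{1,0}M_\s))$. I would take $u(V)$ to be the second-order operator built from $G(V)$ --- schematically the $G(V)$-Laplacian $\Delta_{G(V)}$ acting on sections of $\L^k$ via the prequantum connection, together with a first-order correction involving the Ricci potential --- normalized by a factor $1/(k+n)$. The shift of the level $k$ by the dual Coxeter number $n$ of $\SU(n)$ is forced by the first Chern class of the moduli space: on $M$ one has $c_1(M)$ proportional to $[\omega]$ (the relevant Fano-type identity), and the family of K\"ahler structures is rigid, meaning $G(V)$ is holomorphic and the variation of $J$ is symmetric. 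These two facts --- the Chern class relation and rigidity --- are exactly the input needed for the ansatz to work.

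The main obstacle is proving that $\Nabla$ preserves the subbundle $\V_k$, i.e. that $\Nabla_V s$ is again holomorphic for every holomorphic section $s$. This reduces to the operator identity $\dbar_\s\bigl(\nabla^{\mathrm{t}}_V s+u(V)s\bigr)=0$ on $\ker\dbar_\s$, which after commuting $\dbar_\s$ past the second-order operator $u(V)$ becomes a pointwise differential-geometric identity on $M_\s$ relating $V[\dbar_\s]$, the commutator $[\dbar_\s,\Delta_{G(V)}]$, the curvature of $\L$, and the Ricci form. This is where the K\"ahler identities, the holomorphicity of $G(V)$, and the Chern class relation all enter; verifying it is the technical heart of the theorem, and it is precisely the step that fails for a general family of polarizations and succeeds here only because of rigidity.

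Finally, I would establish projective flatness by computing the curvature $F_\Nabla(V,W)=[\Nabla_V,\Nabla_W]-\Nabla_{[V,W]}$ as a fiberwise operator on $\V_k$. The goal is to show $F_\Nabla(V,W)$ is multiplication by a scalar (a $2$-form on $\T$ with values in $\bC$), so that the induced connection on the projective bundle $\P(\V_k)$ is genuinely flat. The operator computation produces a zeroth-order term, and one argues it is central by using that $M_\s$ is connected with $H^0(M_\s,\cO)=\bC$, so that any holomorphic endomorphism commuting with the structure is constant. The $\Gamma$-invariance then requires no extra work: the trivial connection, the tensor $G(V)$, and the operator $u(V)$ are all built canonically from the $\Gamma$-equivariant data $(M,\omega,\L,\nabla,J_\s)$ and the $\Gamma$-action on $\T$, so $\Nabla$ is automatically equivariant and descends to a natural flat connection on $\P(\V_k)$.
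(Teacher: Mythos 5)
Your construction is essentially the paper's: the paper (Section \ref{ghc}, Theorems \ref{MainGHCI} and \ref{HCE}) likewise writes $\Nabla_V = \Nablat_V - u(V)$ in the ambient trivial bundle, with $u(V)$ built from $\Delta_{G(V)}$, $\nabla_{G(V)dF}$ and the Ricci potential, normalized by $1/(k+n/2)$, and proves preservation of the holomorphic subbundle from exactly the two inputs you name --- rigidity of the family $I$ and the relation $c_1(M,\omega)=n[\frac{\omega}{2\pi}]$ (via Lemmas \ref{dbarl}--\ref{lemma4}). The one caveat is that your final paragraph only gestures at projective flatness: showing that $F_\Nabla(V,W)$ acts on holomorphic sections as a \emph{zeroth-order} operator is the hard second-order computation of Axelrod--Della Pietra--Witten and Hitchin, and the ``holomorphic function on compact connected $M_\sigma$ is constant'' step is the easy part; the paper itself does not carry this computation out either, but defers it to \cite{ADW} and \cite{H}, so your sketch matches the paper's level of detail on that point.
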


This is a result proved independently by Axelrod, Della Pietra and
Witten \cite{ADW} and by Hitchin \cite{H}. In section \ref{ghc}, we
review our differential geometric construction of the connection
$\Nabla$ in the general setting discussed in \cite{A5}. We obtain as a
corollary that the connection constructed by Axelrod, Della Pietra and
Witten projectively agrees with Hitchin's.

\begin{definition}\label{MD1}
  We denote by $Z^{(n,d)}_{k}$ the representation,
  \begin{align*}
    Z^{(n,d)}_{k} : \Gamma \to \P\Aut\bigl(\zkn (\Sigma, p,
    \lambda_0^{(d)})\bigr),
  \end{align*}
  obtained from the action of the mapping class group on the covariant
  constant sections of $\P(\V_k)$ over $\T$.
\end{definition}

The projectively flat connection $\Nabla$ induces a {\em flat}
connection $\Nablae$ in $\End(\V_k)$. Let $\End_0(\V_k)$ be the
subbundle consisting of traceless endomorphisms. The connection
$\Nablae$ also induces a connection in $\End_0(\V_k)$, which is
invariant under the action of $\Gamma$.

In \cite{A3}, we proved

\begin{theorem}[Andersen]\label{MainA3} Assume that $n$ and $d$ are coprime or
  that $(n,d)=(2,0)$ when $g=2$.  Then, we have that
  \begin{equation*}
    \bigcap_{k=1}^\infty \ker(Z^{(n,d)}_k) =
    \begin{cases}
      \{1, H\} & g=2 \mbox{, }n=2 \mbox{ and } d=0 \\ \{1\}&
      \mbox{otherwise},
    \end{cases}
  \end{equation*}
  where $H$ is the hyperelliptic involution.
\end{theorem}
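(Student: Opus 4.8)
The plan is to show that any $\phi\in\Gamma$ lying in $\bigcap_{k}\ker Z^{(n,d)}_k$ must act trivially on the symplectic manifold $(M,\omega)$, and then to invoke the (independently known) determination of the kernel of the action $\Gamma\to\Diff(M)$, which is $\{1,H\}$ in the exceptional case $g=2,n=2,d=0$ and trivial otherwise. The bridge from the representation-theoretic hypothesis to this geometric conclusion will be supplied by Toeplitz operators. For $f\in C^\infty(M)$ and each $\sigma\in\T$, let $T_f^{(k)}(\sigma)\in\End(H^0(M_\sigma,\L^k))$ be the operator obtained by composing multiplication by $f$ with the orthogonal projection onto holomorphic sections; as $\sigma$ varies these assemble into a smooth section $T_f^{(k)}$ of $\End(\V_k)$. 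I will use two facts about these operators: the norm asymptotics $\lim_{k\to\infty}\|T_f^{(k)}(\sigma)\|=\sup_M|f|$, and the naturality $\phi^*T_f^{(k)}=T_{f\circ\phi}^{(k)}$ under the lifted action of $\phi$ on $\V_k$. Throughout I work in the coprime case, where $M$ is smooth and the quantization apparatus applies directly; the case $(n,d)=(2,0)$, $g=2$ requires a separate treatment where precisely the element $H$ enters.

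The heart of the argument combines these facts with the flat connection $\Nablae$ on $\End(\V_k)$. Since $\phi$ lies in $\ker Z^{(n,d)}_k$ for every $k$, it acts as a scalar on the space of $\Nabla$-covariant constant sections of $\V_k$, so the induced conjugation action of $\phi$ fixes every $\Nablae$-flat section of $\End(\V_k)$. Fix a basepoint $\sigma_0$ and let $\widetilde{T}$ be the $\Nablae$-flat section with $\widetilde{T}(\sigma_0)=T_f^{(k)}(\sigma_0)$. By $\Gamma$-invariance of $\Nabla$ the conjugate $\phi\cdot\widetilde{T}$ is again flat, and by the vanishing just noted it equals $\widetilde{T}$. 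Evaluating this identity at $\sigma_0$ and applying naturality converts it into a comparison between $T_f^{(k)}$ and $T_{f\circ\phi}^{(k)}$, modulo the discrepancy between the Toeplitz section $T_f^{(k)}$ and its parallel transport $\widetilde{T}$. The essential input is that this discrepancy is $O(1/k)$ in operator norm, i.e.\ that Toeplitz operators are \emph{asymptotically flat} for the Hitchin connection; this is exactly the relation between $\Nabla$ and Toeplitz operators established by the first named author. Granting it, one obtains
\begin{equation*}
  \bigl\|T_{f\circ\phi}^{(k)}-T_f^{(k)}\bigr\| = O(1/k).
\end{equation*}

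Applying the norm asymptotics to $g=f\circ\phi-f$ then forces $\sup_M|f\circ\phi-f|=0$, so that $f\circ\phi=f$ for every $f\in C^\infty(M)$ and hence $\phi$ acts as the identity on $M$. It remains to identify the kernel of $\Gamma\to\Diff(M)$: under the coprimality hypothesis this kernel is trivial, while for $g=2$, $n=2$, $d=0$ the only nontrivial element is the hyperelliptic involution $H$, which acts trivially on the moduli space because the central element $-\Id\in\SU(2)$ absorbs its effect on holonomies; since such an element also acts projectively trivially on each $\V_k$, it indeed lies in every $\ker Z^{(n,d)}_k$. This yields the stated dichotomy.

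The main obstacle is the asymptotic-flatness estimate: proving that $\Nablae T_f^{(k)}$ — equivalently, the failure of $T_f^{(k)}$ to be $\Nablae$-parallel — is $O(1/k)$ uniformly, so that parallel transport of a Toeplitz operator remains a Toeplitz operator modulo lower-order terms. This rests on the explicit description of $\Nabla$ as a second-order operator differentiating along the fibres, together with the symbol calculus for Toeplitz operators, in order to control the relevant curvature and subleading contributions. The compactness of $M$ and the finite rank of $\V_k$ are what make the uniform $k^{-1}$ control, and hence the passage to the limit, possible.
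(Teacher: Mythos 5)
Your proposal is correct and follows essentially the same route as the paper, which itself only sketches the argument of \cite{A3}: use the $\Gamma$-invariant projectively flat Hitchin connection, the asymptotic flatness $\|\Nablae_V T^{(k)}_f\|=O(k^{-1})$ of Toeplitz operators (Theorem \ref{MainFGHCI2}), and the norm asymptotics $\lim_k\|T^{(k)}_f\|=\sup_M|f|$ to force $f\circ\phi=f$ for all $f$, then identify the kernel of $\Gamma\to\Diff(M)$ with $\{1,H\}$ in the $g=2$, $n=2$, $d=0$ case and $\{1\}$ otherwise. The key estimate you flag as the main obstacle is precisely the content of the formal Hitchin connection results reviewed in the paper, so no genuinely new ingredient is missing.
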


The main ingredient in the proof of this Theorem is the {\em Toeplitz
  operators} associated to smooth functions on $M$. For each $f\in
C^\infty(M)$ and each point $\sigma\in \T$, we have the Toeplitz
operator,
\[T^{(k)}_{f,\sigma} : H^0(M_{\sigma},\L_\s^k) \ra
H^0(M_{\sigma},\L_\s^k),\] which is given by
\[T^{(k)}_{f,\sigma} = \pi^{(k)}_\sigma(fs)\] for all $s\in
H^0(M_{\sigma},\L_\s^k)$. Here $\pi^{(k)}_\sigma$ is the orthogonal
projection onto $H^0(M_{\sigma},\L_\s^k)$ induced from the $L_2$-inner
product on $\C^\infty(M,\L^k)$. We get a smooth section of
$\End(\V^{(k)})$,
\[T_{f}^{(k)} \in C^\infty(\T,\End(\V^{(k)})), \] by letting
$T_{f}^{(k)}(\sigma) = T_{f,\sigma}^{(k)}$ (see \cite{A3}). See
section \ref{BZdq} for further discussion of the Toeplitz operators
and their connection to deformation quantization.

The sections $T_{f}^{(k)}$ of $\End(\V^{(k)})$ over $\T$ are not
covariant constant with respect to Hitchin's connection $\Nablae$.
However, they are asymptotically as $k$ goes to infinity. This will be
made precise when we discuss the formal Hitchin connection below.

As a further application of TQFT and the theory of Toeplitz operators
together with the theory of coherent states, we recall the first
authors solution to a problem in geometric group theory, which has
been around for quite some time (see e.g. Problem (7.2) in Chapter 7,
"A short list of open questions", of \cite{BHV}): In \cite{A7},
Andersen proved that

\begin{theorem}[Andersen]\label{Main}
  The mapping class group of a closed oriented surface, of genus at
  least two, does not have Kazhdan's property (T).
\end{theorem}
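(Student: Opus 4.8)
The plan is to show that the mapping class group $\Gamma$ fails property (T) by exhibiting a sequence of representations that has almost invariant vectors but no genuine invariant vectors, since property (T) is precisely the negation of this phenomenon. The natural candidates are the quantum representations $Z^{(n,d)}_k$ of Definition \ref{MD1}, and the key analytic input is that the curve operators, or more precisely suitable Toeplitz operators, are \emph{asymptotically} flat with respect to the Hitchin connection $\Nabla$ as $k\to\infty$, so that asymptotically they commute with the $\Gamma$-action. Concretely, I would first fix a single nonseparating simple closed curve $\gamma$ and consider, for large $k$, a unit vector $v_k$ built from a coherent state concentrated near a point of the moduli space $M$ where the holonomy function of $\gamma$ is peaked; the curve-operator formula in terms of $S$-matrices, together with the factorization \eqref{Fact}, gives explicit control on how these vectors sit inside $\V_{k,\sigma}$.

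The main structural idea is to use the theory of coherent states to transport the failure of an \emph{isometric} group action with invariant vectors into the quantum setting. First I would recall that if $\Gamma$ had property (T), then for any unitary representation and any finite generating set $\{\phi_1,\dots,\phi_m\}$ of $\Gamma$, almost-invariant vectors would force the existence of a genuinely invariant vector. So the strategy is to produce, for each $k$, a vector $v_k\in \V_{k,\sigma}$ (for a fixed base point $\sigma\in\T$) such that $\|Z^{(n,d)}_k(\phi_i)v_k - v_k\|\to 0$ as $k\to\infty$ for every generator $\phi_i$, while showing that no nonzero vector can be fixed by all of $\Gamma$ simultaneously. The almost-invariance comes from choosing $v_k$ to be (approximately) covariant constant: using that the Toeplitz operators $T^{(k)}_f$ are asymptotically covariant constant for $\Nablae$, one builds $v_k$ as an eigenvector-like coherent state associated to a function $f$ whose value is nearly preserved under the mapping classes in question, and the asymptotic expansion of the curve operators controls the error $\|Z^{(n,d)}_k(\phi_i)v_k-v_k\|$ by a quantity that is $O(1/k)$.

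The absence of an honest invariant vector is the complementary half: here I would argue that the action of $\Gamma$ on the bundle $\V_k$ (equivalently on $H^0(M_\sigma,\L^k)$) has no nonzero fixed projective section, using asymptotic faithfulness (Theorem \ref{MainA3}), which guarantees that for large $k$ the representations separate nontrivial mapping classes, so a globally invariant vector would have to be invariant under an infinite-image action on the finite-dimensional space $\zkn(\Sigma,p,\lambda_0^{(d)})$ and can be ruled out by the concentration behaviour of the coherent states under, say, a pseudo-Anosov element whose action moves the support of $f$ off itself. Combining the almost-invariance with this non-invariance across the full tower $\{\V_k\}_{k}$ yields a representation of $\Gamma$ with almost invariant vectors but no invariant vector, contradicting property (T).

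The hard part will be making the coherent-state estimates uniform and quantitative: one must control simultaneously the $k\to\infty$ asymptotics of the curve operators, the deviation of the Toeplitz sections from being $\Nablae$-parallel, and the overlap of coherent states transported by a fixed finite generating set, all with error bounds that survive in the limit. Precisely reconciling the \emph{projective} nature of $Z^{(n,d)}_k$ (it lands in $\P\Aut$) with the requirement of honest unitary almost-invariant vectors for property (T) — that is, lifting the projective representation to a genuine unitary one on an appropriate space, and verifying the phase ambiguities do not destroy the almost-invariance — is where I expect the real subtlety to lie, and it is here that the full asymptotic expansion of the curve operators, rather than just their leading term, is likely to be needed.
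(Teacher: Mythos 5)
First, a point of comparison: the paper does not actually prove Theorem \ref{Main}; it only recalls the statement from [A7]. So the comparison here is with the strategy of that reference, which the surrounding text and bibliography of the present paper clearly indicate (coherent states, Toeplitz asymptotics, and the Szeg\"o kernel at singular points, cf.\ the reference [AC]). Your overall framing --- exhibit, in the tower of quantum representations, almost-invariant vectors without invariant vectors --- is the right circle of ideas, but two of your concrete choices would make the argument fail.

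The first gap is the base point of your coherent states. You propose to concentrate $v_k$ near a point of $M$ where a holonomy function $h_{\gamma,\lambda}$ peaks. Such a point is not fixed by the mapping class group: a generator $\phi$ moves it to a different point of $M$, and coherent states centred at distinct points become asymptotically \emph{orthogonal} as $k\to\infty$, so $\|Z^{(n,d)}_k(\phi)v_k-v_k\|$ tends to $\sqrt2$ rather than to $0$. No choice of ``function nearly preserved by the mapping classes'' can repair this, since the only $\Gamma$-invariant functions on $M$ are constants. The actual construction bases the coherent states at a point of $M$ fixed by \emph{all} of $\Gamma$ --- a flat connection with holonomy in the center of $\SU(n)$ --- which is why the argument is run at $d=0$ and why one needs the asymptotic expansion of the Bergman/Szeg\"o kernel at a \emph{singular} point of the moduli space; this is the hard analytic input, not the uniformity over a generating set that you flag.

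The second gap is your treatment of projectivity. ``Lifting the projective representation to a genuine unitary one'' is not available here in any useful form (the relevant central extensions do not split, and the phase ambiguities are exactly what you cannot control). The resolution --- already signalled in this paper by the introduction of $\End_0(\V_k)$ immediately after Theorem \ref{projflat} --- is to pass to the conjugation action of $\Gamma$ on $\End_0(\V_k)$, which is an honest unitary representation insensitive to projective phases, and to take as almost-invariant vectors the traceless parts of the rank-one projectors onto the coherent states at the fixed point; the Toeplitz machinery then controls $\|Z^{(n,d)}_k(\phi)A_kZ^{(n,d)}_k(\phi)^{-1}-A_k\|$. The ``no invariant vectors'' half must likewise be argued in $\End_0(\V_k)$, i.e.\ one must rule out nontrivial asymptotic intertwiners rather than fixed vectors; asymptotic faithfulness (Theorem \ref{MainA3}) together with ergodicity of the $\Gamma$-action on $M$ enters there, roughly in the way you suggest but applied to endomorphisms, not to sections of $\V_k$.
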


Returning to the geometric construction of the Reshetikhin-Turaev
TQFT, let us recall the geometric construction of the curve operators.
First of all, the decomposition (\ref{Fact}) is geometrically obtained
as follows (see \cite{A6} for the details):

One considers a one parameter family of complex structures
$\sigma_t\in \T$, $t\in \bR_+$, such that the corresponding family in
the moduli space of curves converges in the Mumford-Deligne boundary
to a nodal curve, which topologically corresponds to shrinking
$\gamma$ to a point. By the results of \cite{A1.5}, the corresponding
sequence of complex structures on the moduli space $M$ converges to a
non-negative polarization on $M$ whose isotropic foliation is spanned
by the Hamiltonian vector fields associated to the holonomy functions
of $\gamma$. The main result of \cite{A6} is that the covariant
constant sections of $\V^{(k)}$ along the family $\sigma_t$ converges
to distributions supported on the Bohr-Sommerfeld leaves of the
limiting non-negative polarization as $t$ goes to infinity. The direct
sum of the geometric quantization of the level $k$ Bohr-Sommerfeld
levels of this non-negative polarization is precisely the left-hand
side of (\ref{Fact}). A sewing-construction, inspired by conformal
field theory (see \cite{TUY}), is then applied to show that the
resulting linear map from the right-hand side of (\ref{Fact}) to the
left-hand side is an isomorphism. This is described in detail in
\cite{A6}.

In \cite{A6}, we further prove the following important asymptotic
result. Let $h_{\gamma,\lambda}\in C^\infty(M)$ be the holonomy
function obtained by taking the trace in the representation $\lambda$
of the holonomy around $\gamma$.

\begin{theorem}[Andersen]\label{MainA6}
  For any one-dimensional oriented submanifold $\gamma$ and any
  labeling $\lambda$ of the components of $\gamma$, we have that $$
  \lim_{k \ra \infty} \|Z^{(n,d)}_k(\gamma, \lambda) -
  T^{(k)}_{h_{\gamma,\lambda}}\| = 0. $$
\end{theorem}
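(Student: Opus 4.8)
The plan is to analyze both operators through the factorization decomposition (\ref{Fact}) and its geometric realization from \cite{A6}, and to reduce the norm estimate to the degeneration limit. Recall that the curve operator is \emph{exactly} diagonal with respect to (\ref{Fact}), with the explicit eigenvalues $S_{\lambda,\mu}(S_{0,\mu})^{-1}$, whereas the Toeplitz operator $T^{(k)}_{h_{\gamma,\lambda}}$ is a priori a genuine section of $\End(\V_k)$ over $\T$. Since the curve operators are induced by the TQFT and are therefore independent of the choice of complex structure, the corresponding section of $\End(\V_k)$ is covariant constant for the flat connection $\Nablae$; the Toeplitz operators are covariant constant for $\Nablae$ only asymptotically as $k\ra\infty$, as indicated above. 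In either case the operator norm of the difference is asymptotically independent of $\sigma\in\T$, so it suffices to establish the estimate along the degenerating family $\sigma_t$ and to let $t\ra\infty$.

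The first half of the argument is the identification of the curve-operator eigenvalues. By the Verlinde-type evaluation of the $S$-matrix, the ratio $S_{\lambda,\mu}(S_{0,\mu})^{-1}$ equals the character of the representation $\lambda$ evaluated at the element of $\SU(n)$ determined by the label $\mu$ (of the form $\exp(2\pi i(\mu+\rho)/(k+n))$ in a suitable parametrization of the Cartan). By the results of \cite{A1.5} and \cite{A6}, this is precisely the conjugacy class of the holonomy around $\gamma$ that labels the level $k$ Bohr-Sommerfeld leaf of the limiting non-negative polarization indexed by $\mu$. As $h_{\gamma,\lambda}$ is by definition the trace in $\lambda$ of the holonomy around $\gamma$, the eigenvalue of $Z^{(n,d)}_k(\gamma,\lambda)$ on the $\mu$-summand is exactly the value of $h_{\gamma,\lambda}$ on the corresponding Bohr-Sommerfeld leaf.

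The second half treats the Toeplitz side. Along $\sigma_t$ the complex structures converge to the non-negative polarization whose isotropic leaves are the level sets of the holonomy functions of $\gamma$, and, by \cite{A6}, the covariant constant sections concentrate as $t\ra\infty$ as distributions supported on the level $k$ Bohr-Sommerfeld leaves. On such concentrating sections the operator $T^{(k)}_{h_{\gamma,\lambda}}=\pi^{(k)}_\sigma\circ(h_{\gamma,\lambda}\,\cdot\,)$ acts, to leading order, by multiplication by the value of $h_{\gamma,\lambda}$ on the supporting leaf: the function $h_{\gamma,\lambda}$ is constant along the isotropic foliation, since its Hamiltonian vector field spans that foliation, and the orthogonal projection $\pi^{(k)}_\sigma$ preserves the leafwise decomposition up to terms vanishing as $t,k\ra\infty$. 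Hence, with respect to (\ref{Fact}), $T^{(k)}_{h_{\gamma,\lambda}}$ becomes asymptotically diagonal with the same eigenvalues as the curve operator.

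Combining the two computations, both operators are asymptotically diagonal in (\ref{Fact}) with matching diagonal entries, so $\|Z^{(n,d)}_k(\gamma,\lambda)-T^{(k)}_{h_{\gamma,\lambda}}\|\ra 0$ along $\sigma_t$ as $k\ra\infty$, and by the covariant constancy discussed above this yields the stated limit for every $\sigma$. The main obstacle is the control of the error terms on the Toeplitz side uniformly over the label set $\Lambda_k^{(n)}$, whose cardinality grows with $k$: one must show both that the off-diagonal blocks of $T^{(k)}_{h_{\gamma,\lambda}}$ relative to (\ref{Fact}) decay in operator norm and that replacing $h_{\gamma,\lambda}$ by its constant leaf-value is legitimate at the level of operator norms, uniformly in $\mu$. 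This requires the sharp concentration (coherent-state) estimates for the degenerating covariant constant sections of \cite{A6}, together with the standard norm and product estimates for Toeplitz operators, in order to interchange the two limits $t\ra\infty$ and $k\ra\infty$ in a controlled way.
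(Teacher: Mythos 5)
Your proposal follows essentially the same route as the paper's own sketch: diagonalize the curve operator via the factorization (\ref{Fact}) with eigenvalues $S_{\lambda,\mu}(S_{0,\mu})^{-1}$ given by Kac's formula (\ref{KacS}), identify these asymptotically with the values of $h_{\gamma,\lambda}$ on the level $k$ Bohr--Sommerfeld leaves of the limiting non-negative polarization, and use the concentration of covariant constant sections along the degenerating family $\sigma_t$ from \cite{A6} to show the Toeplitz operator is asymptotically diagonal with the same entries. The paper likewise defers the uniform error estimates you flag (off-diagonal decay and uniformity over the growing label set) to \cite{A6}, so your outline matches it in both strategy and level of rigor.
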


Let us here give the main idea behind the proof of Theorem
\ref{MainA6} and refer to \cite{A6} for the details. One considers the
explicit expression for the $S$-matrix, as given in formula (13.8.9)
in Kac's book \cite{Kac}
\begin{equation}
  S_{\lambda,\mu}/S_{0,\mu} = \lambda(e^{-2 \pi i \frac{\check{\mu}
      + \check{\rho}}{k+n}}),\label{KacS}
\end{equation}
where $\rho$ is half the sum of the positive roots and $\check{\nu}$
($\nu$ any element of $\Lambda$) is the unique element of the Cartan
subalgebra of the Lie algebra of $\SU(n)$ which is dual to $\nu$ with
respect to the Cartan-Killing form $(\cdot,\cdot)$.

From the expression (\ref{KacS}), one sees that under the isomorphism
$\check{\mu}\mapsto \mu$, the expression $S_{\lambda,\mu}/S_{0,\mu}$
makes sense for any $\check{\mu}$ in the Cartan subalgebra of the Lie
algebra of $\SU(n)$. Furthermore, one finds that the values of this
sequence of functions (depending on $k$) is asymptotic to the values
of the holonomy function $h_{\gamma,\lambda}$ at the level $k$
Bohr-Sommerfeld sets of the limiting non-negative polarizations
discussed above (see \cite{A1.5}). From this, one can deduce Theorem
\ref{MainA6}. See again \cite{A6} for details.

Let us now consider the general setting treated in \cite{A5}. Thus, we
consider, as opposed to only considering the moduli spaces, a general
prequantizable symplectic manifold $(M, \omega)$ with a prequantum
line bundle $(L,(\cdot,\cdot), \nabla)$. We assume that $\T$ is a
complex manifold which holomorphically and rigidly (see Definition
\ref{Ridig}) parameterizes K\"{a}hler structures on
$(M,\omega)$. Then, the following theorem, proved in \cite{A5},
establishes the existence of the Hitchin connection under a mild
cohomological condition.

\begin{theorem}[Andersen]\label{MainGHCI}
  Suppose that $I$ is a rigid family of K\"{a}hler structures on the
  compact, prequantizable symplectic manifold $(M,\omega)$ which
  satisfies that there exists an $n\in \bZ$ such that the first Chern
  class of $(M,\omega)$ is $n [\frac{\omega}{2\pi}]\in H^2(M,\bZ)$ and
  \mbox{$H^1(M,\bR) = 0$}. Then, the Hitchin connection $\Nabla$ in
  the trivial bundle $\cH^{(k)} = \mathcal{T} \times C^\infty(M,
  \mathcal{L}^k)$ preserves the subbundle $H^{(k)}$ with fibers
  $H^0(M_\sigma, \mathcal{L}^k)$. It is given by
  \[\Nabla_V = \Nablat_V + \frac1{4k+2n} \left\{\Delta_{G(V)} +
    2\nabla_{G(V)dF} + 4k V'[F]\right\},\] where $\Nablat$ is the
  trivial connection in $\cH^{(k)}$, and $V$ is any smooth vector
  field on $\T$.
\end{theorem}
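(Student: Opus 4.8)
The plan is to characterize the subbundle $H^{(k)}$ by a differential equation and then construct $\Nabla$ as a correction of the trivial connection $\Nablat$ compatible with that equation. A section $s$ of $\cH^{(k)}$ lies in $H^{(k)}$ precisely when $\nabla''_\sigma s = 0$ for every $\sigma$, where $\nabla''_\sigma$ denotes the $(0,1)$-part of $\nabla$ determined by the complex structure $J_\sigma$ on $M_\sigma$. Seeking $\Nabla_V = \Nablat_V + u(V)$, I would first differentiate the holomorphicity condition along a vector field $V$ on $\T$: for a section $s$ of $H^{(k)}$ one gets $\nabla''_\sigma(\Nablat_V s) = -(V[\nabla''_\sigma])s$, and since $\nabla s$ is of type $(1,0)$ for holomorphic $s$, the right-hand side is an explicit first-order expression in $s$ built from the variation $V[J]$. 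Writing $V[J] = \tilde G(V) + \overline{\tilde G(V)}$ and using $\omega$ to convert $\tilde G(V)$ into a symmetric bivector field $G(V) \in C^\infty(M, S^2 T^{1,0}M)$ (which, because the family is holomorphic, depends only on the $(1,0)$-part $V'$ of $V$), the task reduces to producing an operator $u(V)$ with $\nabla''_\sigma(u(V)s) = -\nabla''_\sigma(\Nablat_V s)$ for all holomorphic $s$. This follows the strategy of Hitchin, now in the general setting of \cite{A5}.

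The heart of the argument is the computation of $\nabla''_\sigma(\Delta_{G(V)}s)$ for holomorphic $s$, where $\Delta_{G(V)}$ is the second-order operator with symbol $G(V)$. I would commute the $(0,1)$-derivative through $\Delta_{G(V)}$. Two features make this tractable: first, the rigidity hypothesis (Definition \ref{Ridig}) asserts $\nabla''_\sigma(G(V)) = 0$, so the symbol passes through $\nabla''_\sigma$ with no derivative term; second, the integrability of $J_\sigma$ ensures that the remaining contributions come only from the commutators $[\nabla''_\sigma, \nabla]$, which by the Ricci identities are curvature terms. Here the two curvatures enter with different weights: the curvature of $\L^k$ equals $-ik\omega$ and produces a term proportional to $k$ times $-\nabla''_\sigma(\Nablat_V s)$, while the Riemannian curvature of $M_\sigma$ produces a term involving the Ricci form $\rho_\sigma$. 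Invoking the hypothesis $c_1(M,\omega) = n[\frac{\omega}{2\pi}]$ together with the $\partial\bar\partial$-lemma, I would write $\rho_\sigma = n\omega + 2i\partial_\sigma\bar\partial_\sigma F_\sigma$ for the Ricci potential $F = F_\sigma$; this splits the Ricci contribution into an $n\omega$-piece, reproducing $-\nabla''_\sigma(\Nablat_V s)$ with weight $n$, and a residual piece governed by $\partial F$.

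The two lower-order terms in the stated formula are exactly what absorbs these residuals. The first-order term $2\nabla_{G(V)dF}$ is arranged to cancel the $\partial F$-contribution left over from the Ricci potential, and the zeroth-order term $4kV'[F]$ compensates for the variation of $F_\sigma$ along $\T$, so that the full expression transforms correctly; here the connectedness of $M$ and the hypothesis $H^1(M,\bR) = 0$ guarantee that $F$ is well defined up to a constant and that the relevant closed one-forms are exact, so that these cancellations are unobstructed. Collecting the $k$- and $n$-weighted copies shows that $\nabla''_\sigma(\{\Delta_{G(V)} + 2\nabla_{G(V)dF} + 4kV'[F]\}s) = -(4k+2n)\,\nabla''_\sigma(\Nablat_V s)$; dividing by $\frac{1}{4k+2n}$ then gives $\nabla''_\sigma(u(V)s) = -\nabla''_\sigma(\Nablat_V s)$, whence $\nabla''_\sigma(\Nabla_V s) = 0$ and $\Nabla_V s$ is again holomorphic, so $\Nabla$ preserves $H^{(k)}$.

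I expect the main obstacle to be the second-order commutation in the middle step: carrying the $(0,1)$-derivative through $\Delta_{G(V)}$ and bookkeeping every curvature term so that precisely the line-bundle and Ricci contributions survive, with all other terms either vanishing by rigidity and integrability of $J_\sigma$ or being matched by the lower-order corrections. Finally, the $\Gamma$-invariance is immediate once the formula is established, since each ingredient $\Delta_{G(V)}$, $G(V)$, $F$, and the two curvatures is constructed naturally from the $\Gamma$-equivariant data $(\omega, J_\sigma, \nabla)$.
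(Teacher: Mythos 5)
Your proposal follows essentially the same route as the paper: reduce preservation of $H^{(k)}$ to the condition $\nabla^{0,1}_\sigma(u(V)s) = -\tfrac{i}{2}V[I]\nabla^{1,0}s$, compute $\nabla^{0,1}_\sigma$ of $\Delta_{G(V)}s$ via rigidity and the curvature identities (the paper's Lemma \ref{dbarl}), split the Ricci contribution using $\Ric = n\omega + 2i\partial\bar\partial F$, and use $H^1(M,\bR)=0$ to kill the residual closed $(0,1)$-form so that the terms $2\nabla_{G(V)dF}$ and $4kV'[F]$ absorb what remains (the paper's Lemmas \ref{Vriccipot} and \ref{lemma4}). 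The plan, including the identification of the curvature bookkeeping as the main technical burden, matches the paper's proof.
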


In section \ref{fgHc}, we study the formal Hitchin connection which
was introduced in \cite{A5}. Let $\D(M)$ be the space of smooth
differential operators on $M$ acting on smooth functions on $M$. Let
$\C_h$ be the trivial $C^\infty_h(M)$-bundle over $\T$.

\begin{definition}\label{fc2}
  A formal connection $D$ is a connection in $\C_h$ over $\T$ of the
  form
  \[D_V f = V[f] + \tD(V)(f),\] where $\tD$ is a smooth one-form on
  $\T$ with values in $\D_h(M) = \D(M)[[h]]$, $f$ is any smooth
  section of $\C_h$, $V$ is any smooth vector field on $\T$ and $V[f]$
  is the derivative of $f$ in the direction of $V$.
\end{definition}

Thus, a formal connection is given by a formal series of differential
operators
\[\tD(V) = \sum_{l=0}^\infty \tD^{(l)}(V) h^l.\]

From Hitchin's connection in $H^{(k)}$, we get an induced connection
$\Nablae$ in the endomorphism bundle $\End(H^{(k)})$. As previously
mentioned, the Teoplitz operators are not covariant constant sections
with respect to $\Nablae$, but asymptotically in $k$ they are. This
follows from the properties of the formal Hitchin connection, which is
the formal connection $D$ defined through the following theorem
(proved in \cite{A5}).

\begin{theorem}(Andersen)\label{MainFGHCI2} There is a unique formal
  connection $D$ which satisfies that
  \begin{equation}
    \Nablae_V T^{(k)}_f \sim T^{(k)}_{(D_V f)(1/(k+n/2))}\label{Tdf2}
  \end{equation}
  for all smooth section $f$ of $\C_h$ and all smooth vector fields on
  $\T$. Moreover,
  \[\tD = 0 \mod h.\]
  Here $\sim$ means the following: For all $L\in \Z_+$ we have that
  \[\left\| \Nablae_V T^{(k)}_{f} - \left( T^{(k)}_{V[f]} +
      \sum_{l=1}^L T_{\tD^{(l)}_V f}^{(k)} \frac1{(k+n/2)^{l}}\right)
  \right\| = O(k^{-(L+1)}),\] uniformly over compact subsets of $\T$,
  for all smooth maps $f:\T \ra C^\infty(M)$.
\end{theorem}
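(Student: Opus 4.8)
The plan is to establish the existence and uniqueness of the formal connection $D$ by directly analyzing the asymptotic expansion of $\Nablae_V T^{(k)}_f$ and matching it, order by order in $1/(k+n/2)$, to a Toeplitz operator $T^{(k)}_{(D_V f)(1/(k+n/2))}$. The essential input is Theorem \ref{MainGHCI}, which gives the explicit formula for $\Nabla_V$ as a second-order differential operator acting on sections of $\L^k$, together with the standard theory of Toeplitz operators and their asymptotic expansion. First I would compute $\Nablae_V T^{(k)}_f$ explicitly. Since $\Nablae$ is the connection induced on $\End(H^{(k)})$ by $\Nabla$, one has
\[
\Nablae_V T^{(k)}_f = [\Nabla_V, T^{(k)}_f] + T^{(k)}_{V[f]},
\]
where the commutator is taken after composing with the orthogonal projection $\pi^{(k)}_\sigma$. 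Using the formula $\Nabla_V = \Nablat_V + \frac{1}{4k+2n}\{\Delta_{G(V)} + 2\nabla_{G(V)dF} + 4kV'[F]\}$, the leading term $\Nablat_V$ contributes precisely $T^{(k)}_{V[f]}$, so that the commutator of the remaining second-order operator with $T^{(k)}_f$ is what must be expanded asymptotically.

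The key technical step is the product expansion for Toeplitz operators. I would invoke the theorem (due to Bordemann--Meinrenken--Schlichenmaier, and adapted to this family setting in \cite{A5}) that the composition $T^{(k)}_f T^{(k)}_g$ admits an asymptotic expansion $\sum_{l=0}^\infty T^{(k)}_{c_l(f,g)}(k+n/2)^{-l}$ with $c_0(f,g) = fg$, where the coefficients $c_l$ are bidifferential operators. Combined with the expansion of how $\Delta_{G(V)}$, $\nabla_{G(V)dF}$, and multiplication by $V'[F]$ interact with a Toeplitz operator after projection, this produces an asymptotic series
\[
\Nablae_V T^{(k)}_f \sim T^{(k)}_{V[f]} + \sum_{l=1}^\infty T^{(k)}_{E^{(l)}_V(f)}\frac{1}{(k+n/2)^l}
\]
for bidifferential (hence differential in $f$) operators $E^{(l)}_V$. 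Defining $\tD^{(l)}(V) = E^{(l)}_V$ and $\tD(V) = \sum_{l\geq 1}\tD^{(l)}(V)h^l$ gives a formal one-form with values in $\D(M)[[h]]$ satisfying $\tD = 0 \bmod h$, and setting $D_V f = V[f] + \tD(V)(f)$ produces the desired formal connection. For uniqueness, I would use the fact that the asymptotic expansion of a sequence of Toeplitz operators determines the expanding symbols uniquely: if $\sum_l T^{(k)}_{a_l}(k+n/2)^{-l}$ and $\sum_l T^{(k)}_{b_l}(k+n/2)^{-l}$ have the same asymptotics in operator norm, then $a_l = b_l$ for all $l$, because the map $f \mapsto T^{(k)}_f$ is asymptotically injective (the leading-order norm estimate $\|T^{(k)}_f\| = \|f\|_\infty + O(1/k)$ forces $a_0 = b_0$, and one proceeds inductively). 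This pins down each $\tD^{(l)}(V)$ and hence $D$.

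The main obstacle will be making the commutator expansion rigorous and uniform over compact subsets of $\T$, since the operators $\Delta_{G(V)}$ and $\nabla_{G(V)dF}$ are differential operators on sections of $\L^k$ rather than functions, so one cannot immediately apply the scalar Toeplitz calculus. The careful point is to commute these operators past the projection $\pi^{(k)}_\sigma$ and re-express the result in terms of Toeplitz operators associated to smooth functions on $M$; this requires controlling the off-diagonal behavior of the Bergman kernel and the derivatives of the curvature data $F$, $G(V)$ in the parameter $\sigma$. The uniformity over compact subsets, and the truncation estimate $\|\cdot\| = O(k^{-(L+1)})$ for each finite $L$, then follows from the uniform asymptotics of the Toeplitz product expansion in the family, which is exactly what is established in \cite{A5}; I would cite that analysis rather than reprove it, and concentrate the argument on extracting the differential operators $\tD^{(l)}(V)$ and verifying their uniqueness.
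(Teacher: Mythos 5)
Your overall architecture --- split $\Nablae_V T^{(k)}_f$ into a trivial-connection part plus a commutator with the one-form $u(V)$ defining the Hitchin connection, expand via the Toeplitz calculus, and deduce uniqueness from the asymptotics of Toeplitz operator norms --- matches the proof recorded in the paper (formula \eqref{formalcon} and equation \eqref{eq:2}, with details in \cite{A5}), and your uniqueness argument is essentially the correct standard one. However, there is a genuine gap in the existence half. You assert that the trivial connection ``contributes precisely $T^{(k)}_{V[f]}$''. It does not: since $T^{(k)}_{f,\sigma} = \pi^{(k)}_\sigma M^{(k)}_f$ and the orthogonal projection $\pi^{(k)}_\sigma$ varies with $\sigma$, the trivial-connection derivative is $V[\pi^{(k)}]M^{(k)}_f + T^{(k)}_{V[f]}$, and the term $V[\pi^{(k)}]M^{(k)}_f$ is not negligible. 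Using $\pi^{(k)} V[\pi^{(k)}]\pi^{(k)} = 0$ together with the fact that $\Nabla_V = \Nablat_V - u(V)$ preserves $H^{(k)}$, one finds that $V[\pi^{(k)}]$ equals $(1-\pi^{(k)})u(V)\pi^{(k)} + \pi^{(k)}u(V)^*(1-\pi^{(k)})$, and it is exactly this contribution that produces the summand $\pi^{(k)} o(V)^* f\, \pi^{(k)}$ in the paper's characterization \eqref{eq:2} of $E(V)$ --- that is, half of the first-order coefficient of $D$, the half involving $\Delta_{\bar{G}(V)}$ and the anti-holomorphic derivatives. Dropping it yields the wrong formal connection and breaks the symmetry of $\tD^{(1)}$ that is needed later (for instance in the formula for $P^{(1)}$, which involves the full Laplace--Beltrami operator $\Delta = \frac12\Delta_{\tilde G}$ rather than only its holomorphic part).

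A second, related issue is your proposed mechanism for handling the interaction of the second-order operator $\Delta_{G(V)} + 2\nabla_{G(V)dF}$ with $T^{(k)}_f$: a general Bergman-kernel off-diagonal analysis is both heavier than needed and does not by itself explain why the result is \emph{exactly} a Toeplitz operator of a differential operator applied to $f$. The mechanism actually used (section \ref{BZdq} and the computation leading to Theorem \ref{thm:1}) is that $G(V)$ takes values in $S^2(T_\sigma)$, so $o(V)$ differentiates only in holomorphic directions and $o(V)^*$ only in anti-holomorphic ones; the exact identities $\pi^{(k)}\nabla_X = -T^{(k)}_{\delta(X)}$ from \eqref{1to0order}, its iterate \eqref{2to0order}, and $\pi^{(k)}\Delta_B = 0$ from Lemma \ref{lem:1} then show that $\pi^{(k)} o(V)^* f\,\pi^{(k)} + \pi^{(k)} f o(V)\pi^{(k)}$ is on the nose the Toeplitz operator of a function $E(V)(f)$, with no asymptotic expansion required at this step. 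The only place the Schlichenmaier product expansion genuinely enters is in converting the zeroth-order part $V'[F]$ of $u(V)$ into the terms $-V[F]f + V[F]\tBTstar f$ of \eqref{formalcon}. You should restructure the existence proof around these two exact computations, rather than around a general symbol calculus for the commutator.
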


Now fix an $f\in C^\infty(M)$, which does not depend on $\sigma \in
\mathcal{T}$, and notice how the fact that $\tilde D = 0 \mbox{ mod }
h$ implies that
\[\left\| \Nablae_V T^{(k)}_{f} \right\| = O(k^{-1}).\]
This expresses the fact that the Toeplitz operators are asymptotically
flat with respect to the Hitchin connection.

We define a mapping class group equivariant formal trivialization of
$D$ as follows.

\begin{definition}\label{formaltrivi2}
  A formal trivialization of a formal connection $D$ is a smooth map
  $P : \T \ra \D_h(M)$ which modulo $h$ is the identity, for all
  $\s\in \T$, and which satisfies
  \[D_V(P(f)) = 0,\] for all vector fields $V$ on $\T$ and all $f\in
  C^\infty_h(M)$. Such a formal trivialization is mapping class group
  equivariant if $P(\phi(\sigma)) = \phi^* P(\sigma)$ for all $\sigma
  \in \T$ and $\phi\in \Gamma$.
\end{definition}

Since the only mapping class group invariant functions on the moduli
space are the constant ones (see \cite{Go1}), we see that in the case
where $M$ is the moduli space, such a $P$, if it exists, must be
unique up to multiplication by a formal constant.

Clearly if $D$ is not flat, such a formal trivialization cannot exist
even locally on $\T$. However, if $D$ is flat and its zero-order term
is just given by the trivial connection in $C_h$, then a local formal
trivialization exists, as proved in \cite{A5}.

Furthermore, it is proved in \cite{A5} that flatness of the formal
Hitchin connection is implied by projective flatness of the Hitchin
connection. As was proved by Hitchin in \cite{H}, and stated above in
Theorem \ref{projflat}, this is the case when $M$ is the moduli
space. Furthermore, the existence of a formal trivialization implies
the existence of unique (up to formal scale) mapping class group
equivariant formal trivialization, provided that $H^1_\Gamma(\T,D(M))
= 0$. The first steps towards proving that this cohomology group
vanishes have been taken in \cite{AV1,AV2, AV3, Vi}.  In this paper,
we prove that

\begin{theorem}\label{Expliformula}
  The mapping class group equivariant formal trivialization of the
  formal Hitchin connection exists to first order, and we have the
  following explicit formula for the first order term of $P$
$$P_\sigma^{(1)}(f) = \frac{1}{4} \Delta_\sigma(f) + i \nabla_{X''_F}(f),$$
where $X''_F$ denotes the (0,1)-part of the Hamiltonian vector field
for the Ricci potential.
\end{theorem}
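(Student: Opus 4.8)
The plan is to determine the trivialization order by order in $h$ and to match its first-order term against an explicit geometric operator. Write the sought trivialization as $P = \id + \sum_{l\geq 1} P^{(l)} h^l$, viewing each $P_\sigma \in \D_h(M)$ as an operator-valued function on $\T$. Since $P$ is the identity modulo $h$ and $\tilde D = 0 \bmod h$, the defining equation $D_V(P(f)) = 0$ of Definition \ref{formaltrivi2}, applied to an $f\in C^\infty(M)$ independent of $\sigma$, becomes the operator identity $V[P] + \tilde D(V)\circ P = 0$ on $\T$, where $V[P]$ is the derivative of the family $\sigma\mapsto P_\sigma$ (here $V[P(f)] = (V[P])(f)$ because $V[f]=0$, and a differential operator annihilating all of $C^\infty(M)$ vanishes). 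Expanding in $h$ and using $\tilde D^{(0)} = 0$, the order $h^0$ equation reads $V[\id]=0$ and is vacuous, while the order $h^1$ equation reads
\[ V[P^{(1)}] = -\tilde D^{(1)}(V). \]
Thus the theorem reduces to two tasks: computing the first-order term $\tilde D^{(1)}(V)$ of the formal Hitchin connection, and exhibiting a $\sigma$-dependent operator $P^{(1)}_\sigma$ whose variation along every $V$ equals $-\tilde D^{(1)}(V)$.

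For the first task, I would start from the explicit Hitchin connection of Theorem \ref{MainGHCI}, writing $\Nabla_V = \Nablat_V + u(V)$ with $u(V) = \frac{1}{4k+2n}\{\Delta_{G(V)} + 2\nabla_{G(V)dF} + 4kV'[F]\}$, and substitute $h = 1/(k+n/2)$, so that $\frac{1}{4k+2n} = h/4$ and $4k = 4/h - 2n$. The induced endomorphism connection gives $\Nablae_V T^{(k)}_f = \Nablat_V T^{(k)}_f + [u(V), T^{(k)}_f]$, and the defining relation (\ref{Tdf2}) identifies its Toeplitz symbol with $(D_V f)(1/(k+n/2))$. I would expand this asymptotically in Toeplitz operators using the Bordemann--Meinrenken--Schlichenmaier / Karabegov--Schlichenmaier calculus, in particular the product expansion $T^{(k)}_f T^{(k)}_g \sim T^{(k)}_{f \BTstar g}$ and the known asymptotics of the variation of the projection $\pi^{(k)}_\sigma$. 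The order $h^0$ contributions --- coming from the multiplication term $V'[F]$ and from $\Nablat_V T^{(k)}_f$ through the $\sigma$-dependence of the holomorphic structure --- must cancel, consistent with $\tilde D^{(0)} = 0$; the surviving order $h^1$ coefficient is $\tilde D^{(1)}(V)$, expressed through $G(V)$, $dF$ and the Kähler Laplacian.

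For the second task, I would verify the ansatz $P^{(1)}_\sigma = \frac14 \Delta_\sigma + i\nabla_{X''_{F_\sigma}}$ by differentiating it along an arbitrary $V$ and checking $V[P^{(1)}] = -\tilde D^{(1)}(V)$. This needs the first-variation formulas of Kähler geometry along the family $M_\sigma$: the variation $V[\Delta_\sigma]$ of the Laplacian, which should reproduce the $\Delta_{G(V)}$-type part of $\tilde D^{(1)}(V)$, and the variation $V[X''_{F_\sigma}]$ of the $(0,1)$-part of the Hamiltonian vector field of the Ricci potential, which must account for the $\nabla_{G(V)dF}$-type and $V'[F]$-type contributions. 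Matching these geometric variations with the operator $\tilde D^{(1)}(V)$ extracted above is the analytic heart of the argument and the step I expect to be the main obstacle, since it requires keeping precise track of coefficients and of the interplay between the varying metric, the varying Ricci potential, and the covariant derivative.

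Finally, global existence and equivariance are handled as follows. Flatness of the formal Hitchin connection (which holds because the Hitchin connection is projectively flat, Theorem \ref{projflat}) forces the $\D(M)$-valued one-form $V\mapsto \tilde D^{(1)}(V)$ to be closed on $\T$; since $\T$ is contractible, it is exact, so a global first-order primitive $P^{(1)}$ exists, and the explicit formula furnishes one concretely. Because $\Delta_\sigma$, the Ricci potential $F_\sigma$, and hence $X''_{F_\sigma}$ are all canonically associated to the Kähler structure $M_\sigma$, and the mapping class group acts on $\T$ compatibly with the family of Kähler structures, naturality yields $\phi^* P^{(1)}_\sigma = P^{(1)}_{\phi(\sigma)}$ for all $\phi \in \Gamma$. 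Hence the first-order term is mapping class group equivariant, which completes the proof to first order.
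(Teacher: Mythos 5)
Your plan is correct and follows essentially the same route as the paper: extract the order-$h$ equation $V[P^{(1)}] = -\tD^{(1)}(V)$, compute $\tD^{(1)}(V)$ from the explicit Hitchin connection via the Toeplitz calculus (the paper does this through the adjoint computation of $o(V)$ in Lemmas \ref{lem:2} and \ref{lem:1}, yielding Theorem \ref{thm:1}), and then recognize the right-hand side as an exact variation using $V[\Delta] = -\Delta_{\tG(V)}$ together with the variation of the Ricci-potential term. The one concrete ingredient you would need to close the matching step you flag as the main obstacle is Karabegov's formula $c^{(1)}(f_1,f_2) = -g(\partial f_1, \dbar f_2) = i\nabla_{X''_{f_1}}(f_2)$ for separation-of-variables star products, which lets the paper combine $\frac12\nabla_{\tG(V)dF}(f)$ and $c^{(1)}(V[F],f)$ into the total derivative $V[c^{(1)}(F,f)] = V[i\nabla_{X''_F}(f)]$.
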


For the proof of the theorem, see section \ref{fgHc}. We will make the
following conjecture.

\begin{conjecture}\label{MainConj}
  The mapping class group equivariant formal trivialization of the
  formal Hitchin connection exists, and for any one-dimensional
  oriented submanifold $\gamma$ and any labeling $\lambda$ of the
  components of $\gamma$, we have the full asymptotic expansion
  \[Z^{(n,d)}_k(\gamma, \lambda) \sim
  T^{(k)}_{P(h_{\gamma,\lambda})},\] which means that for all $L$ and
  all $\sigma\in\T$, we have that
$$
\|Z^{(n,d)}_k(\gamma, \lambda) - \sum_{l=0}^L T^{(k)}_{P_\sigma^{(l)}
  (h_{\gamma,\lambda})} \frac{1}{(k+n/2)^l}\| = O(k^{L+1}). $$
\end{conjecture}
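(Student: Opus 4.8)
The plan is to prove the two assertions of the conjecture separately. The structural engine is the observation that the curve operator is covariant constant for the \emph{honest} flat connection $\Nablae$ on $\End(\V_k)$: by Definition \ref{MD1} the TQFT space is the space of $\Nabla$-covariant-constant sections of $\P(\V_k)$, so evaluation at $\sigma\in\T$ identifies it with $\V_{k,\sigma}$, the identifications at different points differing exactly by $\Nabla$-parallel transport; since $Z^{(n,d)}_k(\gamma,\lambda)$ is a fixed, topologically defined endomorphism of the abstract space, its fibrewise realizations are conjugate under parallel transport, the projective scalar cancelling in $\End$, whence $\Nablae_V Z^{(n,d)}_k(\gamma,\lambda)=0$ for every vector field $V$ on $\T$. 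For the first assertion, existence of $P$ to all orders, I would invoke \cite{A5}: projective flatness of Hitchin's connection (Theorem \ref{projflat}) makes the formal Hitchin connection $D$ flat, a flat formal connection with trivial zeroth-order term admits local formal trivializations, and assembling these into a single $\Gamma$-equivariant $P$ is an obstruction problem with obstruction in $H^1_\Gamma(\T,\D(M))$. Theorem \ref{Expliformula} solves it to first order; the all-orders statement is conditional on the vanishing $H^1_\Gamma(\T,\D(M))=0$, toward which \cite{AV1,AV2,AV3,Vi} are partial results.

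Next I would show that the $P$-corrected Toeplitz family is \emph{asymptotically} parallel to all orders. Writing $B_L = \sum_{l=0}^L T^{(k)}_{P^{(l)}_\sigma(h_{\gamma,\lambda})}(k+n/2)^{-l}$ and applying Theorem \ref{MainFGHCI2} termwise, the coefficient of $(k+n/2)^{-N}$ in the expansion of $\Nablae_V B_L$ is the Toeplitz operator with symbol $V[P^{(N)}_\sigma(h_{\gamma,\lambda})] + \sum_{j=1}^N \tD^{(j)}(V)\big(P^{(N-j)}_\sigma(h_{\gamma,\lambda})\big)$, which is precisely the order-$N$ part of $D_V(P(h_{\gamma,\lambda}))$ and hence vanishes for $N\le L$ by the trivialization identity $D_V P = 0$; thus $\Nablae_V B_L = O(k^{-(L+1)})$. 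Combined with $\Nablae_V Z^{(n,d)}_k(\gamma,\lambda)=0$, the remainder $W_L = Z^{(n,d)}_k(\gamma,\lambda) - B_L$ satisfies $\Nablae_V W_L = O(k^{-(L+1)})$. Since $\Nablae$-parallel transport is unitary (the quantum representations being unitary), it preserves operator norm, so integrating along a path in the contractible space $\T$ gives $\big|\,\|W_{L,\sigma}\| - \|W_{L,\sigma_0}\|\,\big| = O(k^{-(L+1)})$ uniformly on compacta. This reduces the whole conjecture to establishing the expansion at a single, conveniently chosen base point $\sigma_0$.

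For the base-point expansion I would refine the degeneration analysis of \cite{A6}. Along the family $\sigma_t$ whose limit is the non-negative polarization adapted to $\gamma$, the decomposition (\ref{Fact}) diagonalizes the curve operator with eigenvalues $S_{\lambda,\mu}/S_{0,\mu}$; by (\ref{KacS}) these are the values, at the level-$k$ Bohr--Sommerfeld points, of the $\check\mu$-analytic function $\lambda(e^{-2\pi i(\check\mu+\check\rho)/(k+n)})$, whose complete Taylor expansion in $1/(k+n)$ about $\check\mu\mapsto\mu$ produces candidate higher symbols $c^{(l)}_{\sigma_0}$ with $c^{(0)}_{\sigma_0}=h_{\gamma,\lambda}$. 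The content beyond Theorem \ref{MainA6} is to show that these eigenvalue asymptotics, fed through the sewing isomorphism of \cite{A6}, assemble into a genuine Toeplitz operator with complete symbol, i.e. $Z^{(n,d)}_k(\gamma,\lambda)\sim\sum_{l} T^{(k)}_{c^{(l)}_{\sigma_0}}(k+n/2)^{-l}$; the leading case is exactly \cite{A6}.

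It then remains to identify the symbols with $P(h_{\gamma,\lambda})$. Feeding $\Nablae_V Z^{(n,d)}_k(\gamma,\lambda)=0$ into Theorem \ref{MainFGHCI2} and matching powers of $(k+n/2)^{-1}$, the asymptotic injectivity of the Toeplitz symbol map ($\|T^{(k)}_a\|\to\sup_M|a|$) forces each coefficient to vanish, so the formal function $c_\sigma = \sum_l c^{(l)}_\sigma h^l$ obeys $D_V c = 0$. As $Z^{(n,d)}_k(\gamma,\lambda)$ is natural under $\Gamma$ (the commutative diagram for curve operators) and $P$ is $\Gamma$-equivariant, $c$ and $P(h_{\gamma,\lambda})$ are $\Gamma$-equivariant formal-parallel sections with the same leading term; since the only $\Gamma$-invariant functions on $M$ are constant \cite{Go1}, they differ at most by an additive formal scalar, absorbed into the formal-constant freedom of $P$ (equivalently pinned by comparing the subleading symbol with $P^{(1)}_\sigma$ of Theorem \ref{Expliformula}). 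This yields $c^{(l)}_\sigma = P^{(l)}_\sigma(h_{\gamma,\lambda})$ for all $l$ and the stated estimate. The principal obstacle is the all-orders base-point expansion of the curve operator: the present argument for Theorem \ref{MainA6} controls only the leading symbol, and pushing the Bohr--Sommerfeld and sewing estimates of \cite{A6} uniformly to every order in $1/k$ is the genuinely new analytic work; the cohomological vanishing $H^1_\Gamma(\T,\D(M))=0$ needed for the existence of $P$ is a separate, independently studied difficulty.
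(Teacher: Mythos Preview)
The statement you are addressing is labeled \emph{Conjecture} in the paper, and the paper gives no proof of it. The only remark the authors make is the sentence immediately following the conjecture: ``It is very likely that the techniques used in \cite{A6} to prove Theorem~\ref{MainA6} can be used to prove this conjecture.'' So there is no paper-side argument to compare your proposal against; what you have written is a strategy for attacking an open problem, and you yourself flag its two genuine gaps at the end (the all-orders refinement of the Bohr--Sommerfeld/sewing analysis of \cite{A6}, and the vanishing of $H^1_\Gamma(\T,\D(M))$ needed for the global equivariant $P$). That is an honest assessment and matches the paper's own view of the situation.

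One further point you should be careful about: your reduction to a base point relies on the assertion that $\Nablae$-parallel transport is unitary, justified by ``the quantum representations being unitary''. The unitarity of the Reshetikhin--Turaev representations comes from the skein/TQFT side, not from the $L_2$ Hermitian structure on $H^{(k)}$ used to define the Toeplitz operator norm here; the Hitchin connection is \emph{not} in general unitary for that $L_2$ structure. What you actually need is that parallel transport in $\End(H^{(k)})$ along a fixed compact path has operator norm bounded uniformly in $k$, so that integrating $\Nablae_V W_L = O(k^{-(L+1)})$ does not lose powers of $k$. This is plausible (and related to known asymptotic-unitarity results for the Hitchin connection), but it is an additional analytic input, not a formality.
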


It is very likely that the techniques used in \cite{A6} to prove
Theorem \ref{MainA6} can be used to prove this conjecture.

When we combine this conjecture with the asymptotics of the product of
two Toeplitz operators (see Theorem \ref{tKS1}), we get the full
asymptotic expansion of the product of two curve operators:

\[Z^{(n,d)}_k(\gamma_1, \lambda_1)Z^{(n,d)}_k(\gamma_2, \lambda_2)
\sim T^{(k)}_{P(h_{\gamma_1,\lambda_1}) \tilde \star^{BT}_\sigma
  P(h_{\gamma_2,\lambda_2})},\] where $\tilde \star^{BT}_\sigma$ is
very closely related to the Berezin-Toeplitz star product for the
Kähler manifold $(M_\sigma,\omega)$, as first defined in
\cite{BMS}. See section \ref{BZdq} for further details regarding this.

Suppose that we have a mapping class group equivariant formal
trivialization $P$ of the formal Hitchin connection $D$. We can then
define a new smooth family of star products parametrized by $\T$ as
follows:
\[f\star_\s g = P_\s^{-1}(P_\s(f) \tBTstar_\s P_\s(g))\] for all
$f,g\in C^\infty(M)$ and all $\s\in \T$. Using the fact that $P$ is a
trivialization, it is not hard to prove that $\star_\s$ is independent
of $\s$, and we simply denote it $\star$.  The following theorem is
proved in section \ref{fgHc}.

\begin{theorem}\label{star}
  The star product $\star$ has the form
  \[f\star g = fg - \frac{i}{2} \{f,g\} + O(h^2).\]
\end{theorem}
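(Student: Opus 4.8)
The plan is to read off the order-$h$ term of $\star$ directly from its definition and then to decompose that term into symmetric and antisymmetric parts. Since $\star$ is independent of $\sigma$ (as noted before the statement), I may work at a fixed K\"ahler structure $\sigma$. Writing $P_\sigma = \Id + h P_\sigma^{(1)} + O(h^2)$ as in Definition \ref{formaltrivi2} and Theorem \ref{Expliformula}, and correspondingly $P_\sigma^{-1} = \Id - h P_\sigma^{(1)} + O(h^2)$, I would first record the leading expansion of the relevant Berezin--Toeplitz product from Theorem \ref{tKS1}, namely $a \tBTstar_\sigma b = ab + h\, c_1(a,b) + O(h^2)$ for an explicit first-order bidifferential operator $c_1$ on $M_\sigma$. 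Substituting $a = P_\sigma(f)$, $b = P_\sigma(g)$, multiplying out, and applying $P_\sigma^{-1}$ to the result, all the order-$h$ contributions collect into
\[ f \star g = fg + h\bigl( c_1(f,g) + f P_\sigma^{(1)}(g) + P_\sigma^{(1)}(f) g - P_\sigma^{(1)}(fg) \bigr) + O(h^2). \]
The parenthesised expression is $c_1 + \delta P_\sigma^{(1)}$, where $\delta$ is the Hochschild coboundary of the commutative algebra $C^\infty(M)$.

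The key structural observation is that $\delta P_\sigma^{(1)}$ is automatically symmetric in $f$ and $g$, since $(\delta A)(f,g) = f A(g) - A(fg) + A(f)g$ is unchanged under interchanging $f$ and $g$ by commutativity of pointwise multiplication. Hence the \emph{antisymmetric} part of the order-$h$ term of $\star$ coincides with that of $c_1$. By the semiclassical behaviour of the commutator of Toeplitz operators --- equivalently, the defining property of the Berezin--Toeplitz product recorded in Theorem \ref{tKS1} --- this antisymmetric part is exactly $-\tfrac{i}{2}\{f,g\}$; note also that the distinction between $\tBTstar_\sigma$ and the genuine product $\BTstar_\sigma$ is irrelevant here, as $1/(k+n/2)$ and $1/k$ agree to first order, so the two differ only at order $h^2$. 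It therefore remains only to show that the \emph{symmetric} part of $c_1 + \delta P_\sigma^{(1)}$ vanishes.

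For the symmetric part I would use the explicit formula $P_\sigma^{(1)}(f) = \tfrac14 \Delta_\sigma(f) + i\nabla_{X''_F}(f)$ from Theorem \ref{Expliformula}. Since $\nabla_{X''_F}$ acts as a derivation it is a Hochschild cocycle and contributes nothing to $\delta P_\sigma^{(1)}$, so $\delta P_\sigma^{(1)} = \tfrac14\,\delta\Delta_\sigma$; the Leibniz defect of the K\"ahler Laplacian, $\Delta_\sigma(fg) - f\Delta_\sigma(g) - g\Delta_\sigma(f) = 2\langle df, dg\rangle_\sigma$, then gives $\delta P_\sigma^{(1)}(f,g) = -\tfrac12\langle df, dg\rangle_\sigma$. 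On the other hand, the symmetric part of $c_1$ extracted from Theorem \ref{tKS1} is the same metric pairing with the opposite sign, so the two cancel and the order-$h$ term of $\star$ is purely antisymmetric and equal to $-\tfrac{i}{2}\{f,g\}$, which is the assertion. I expect the main obstacle to be the bookkeeping of conventions in this last step: one must line up the normalisation of $\Delta_\sigma$, the sign in the Poisson bracket, and the precise symmetric part of $c_1$ so that the coefficient $\tfrac14$ in $P_\sigma^{(1)}$ produces an exact cancellation. Everything else is formal manipulation of power series in $h$ together with the elementary fact that Hochschild coboundaries are symmetric.
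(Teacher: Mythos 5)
Your proposal follows essentially the same route as the paper's proof: expand $P(f)\tBTstar_\sigma P(g)$ to first order in $h$, apply $P^{-1}$, and check that the symmetric contributions cancel, leaving only the antisymmetric part of $c^{(1)}$, which equals $-\tfrac{i}{2}\{f,g\}$ by the deformation-quantization axiom. Your reorganization via the Hochschild coboundary $\delta P^{(1)}$ is a clean way of isolating why the antisymmetric part comes only from $c^{(1)}$, and your remark that $\tBTstar_\sigma$ and $\BTstar_\sigma$ agree to first order is correct (Remark \ref{rem:1} gives $\tilde c^{(1)} = c^{(1)}$).

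Two points prevent the argument from closing as written. First, the symmetric part of $c^{(1)}$ is \emph{not} determined by Theorem \ref{tKS1}, which only pins down $c^{(0)}$ and the antisymmetrization of $c^{(1)}$; the paper obtains it from the separation-of-variables property of the Berezin--Toeplitz star product (equation \eqref{eq:8}, via \cite{KS} and \cite{Kar}), namely $c^{(1)}(f_1,f_2) = -g(\partial f_1,\bar\partial f_2)$, whose symmetric part is $-\tfrac12 g(df_1,df_2)$. Second, with the signs you have written the cancellation fails: that symmetric part $-\tfrac12 g(df,dg)$ and your $\delta P^{(1)}(f,g) = -\tfrac12 g(df,dg)$ (computed from $P = \Id + hP^{(1)}$ with $P^{(1)} = \tfrac14\Delta + i\nabla_{X''_F}$) carry the \emph{same} sign and add to $-g(df,dg)$ rather than to zero. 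The resolution is that the formal trivialization actually expands as $P = \Id - h(\tfrac14\Delta + i\nabla_{X''_F}) + O(h^2)$, as in Proposition \ref{prop:1}; the displayed formula in Theorem \ref{Expliformula} uses the opposite sign convention for the coefficient $P^{(1)}$, which you could not have disambiguated blind. With that sign, the coboundary term becomes $+\tfrac12 g(df,dg)$ and the cancellation goes through exactly as you intend. You correctly flagged this sign bookkeeping as the main risk; it is indeed the one place where the proof as stated does not yet close.
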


We observe that this formula for the first-order term of $\star$
agrees with the first-order term of the star product constructed by
Andersen, Mattes and Reshetikhin in \cite{AMR2}, when we apply the
formula in Theorem \ref{star} to two holonomy functions
$h_{\gamma_1,\lambda_1}$ and $h_{\gamma_2,\lambda_2}$:
\[h_{\gamma_1,\lambda_1}\star h_{\gamma_2,\lambda_2} =
h_{\gamma_1\gamma_2,\lambda_1\cup\lambda_2} - \frac{i}{2}
h_{\{\gamma_1,\gamma_2\},\lambda_1\cup\lambda_2} + O(h^2).\] We recall
that $\{\gamma_1,\gamma_2\}$ is the Goldman bracket (see \cite{Go2})
of the two simple closed curves $\gamma_1$ and $\gamma_2$.

A similar result was obtained for the abelian case, i.e. in the case
where $M$ is the moduli space of flat $U(1)$-connections, by the first
author in \cite{A2}, where the agreement between the star product
defined in differential geometric terms and the star product of
Andersen, Mattes and Reshetikhin was proved to all orders.

We would finally also like to recall that the first named author has
shown that the Nielsen-Thurston classification of mapping classes is
determined by the Reshetikhin-Turaev TQFTs. We refer to \cite{A4.2}
for the full details of this.

A warm thanks is due to the editor of this volume for her persistent
encouragements towards the completion of this contribution.

\section{The Hitchin connection}\label{ghc}

In this section, we review our construction of the Hitchin connection
using the global differential geometric setting of \cite{A5}. This
approach is close in spirit to Axelrod, Della Pietra and Witten's in
\cite{ADW}, however we do not use any infinite dimensional gauge
theory. In fact, the setting is more general than the gauge theory
setting in which Hitchin in \cite{H} constructed his original
connection. But when applied to the gauge theory situation, we get the
corollary that Hitchin's connection agrees with Axelrod, Della Pietra
and Witten's.

Hence, we start in the general setting and let $(M,\omega)$ be any
compact symplectic manifold.

\begin{definition}\label{prequantumb}
  A prequantum line bundle $(\L, (\cdot,\cdot), \nabla)$ over the
  symplectic manifold $(M,\omega)$ consist of a complex line bundle
  $\L$ with a Hermitian structure $(\cdot,\cdot)$ and a compatible
  connection $\nabla$ whose curvature is
  \begin{align*}
    F_\nabla(X,Y) = [\nabla_X, \nabla_Y] - \nabla_{[X,Y]} = -i \omega
    (X,Y).
  \end{align*}
  We say that the symplectic manifold $(M,\omega)$ is prequantizable
  if there exist a prequantum line bundle over it.
\end{definition}

Recall that the condition for the existence of a prequantum line
bundle is that $[\frac{\omega}{2\pi}]\in \im(H^2(M,\bZ) \ra
H^2(M,\bR))$. Furthermore, the inequivalent choices of prequantum line
bundles (if they exist) are parametriced by $H^1(M,U(1))$ (see
e.g. \cite{Woodhouse}).

We shall assume that $(M,\omega)$ is prequantizable and fix a
prequantum line bundle $(\L, (\cdot,\cdot), \nabla)$.

Assume that $\T$ is a smooth manifold which smoothly parametrizes
K\"{a}hler structures on $(M,\omega)$. This means that we have a
smooth\footnote{Here a smooth map from $\T$ to $C^\infty(M,W)$, for
  any smooth vector bundle $W$ over $M$, means a smooth section of
  $\pi_M^*(W)$ over $\T\times M$, where $\pi_M$ is the projection onto
  $M$. Likewise, a smooth $p$-form on $\T$ with values in
  $C^\infty(M,W)$ is, by definition, a smooth section of
  $\pi_{\T}^*\Lambda^p(\T)\otimes \pi_M^*(W)$ over $\T\times M$. We
  will also encounter the situation where we have a bundle $\tW$ over
  $\T\times M$ and then we will talk about a smooth $p$-form on $\T$
  with values in $C^\infty(M,\tW_\s)$ and mean a smooth section of
  $\pi_{\T}^*\Lambda^p(\T)\otimes \tW$ over $\T\times M$.}  map $I :
\T \ra C^\infty(M,\End(TM))$ such that $(M,\omega, I_\s)$ is a
K\"{a}hler manifold for each $\s\in \T$.

We will use the notation $M_\sigma$ for the complex manifold $(M,
I_\s)$. For each $\s\in \T$, we use $I_\s$ to split the complexified
tangent bundle $TM_\bC$ into the holomorphic and the anti-holomorphic
parts. These we denote by
$$T_{\s} = E(I_\s,i) = \im(\Id - iI_\s)$$
and
$$\bT_{\s}= E(I_\s,-i) = \im(\Id + iI_\s)$$
respectively.

The real K\"{a}hler-metric $g_\s$ on $(M_\s,\omega)$, extended complex
linearly to $TM_\bC$, is by definition
\begin{align}
  \label{eq:3}
  g_\s(X,Y) = \omega(X,I_\s Y),
\end{align}
where $X,Y \in C^\infty(M,TM_\bC)$.

The divergence of a vector field $X$ is the unique function
$\delta(X)$ determined by
\begin{align}
  \label{eq:1}
  \mathcal{L}_X \omega^m = \delta(X) \omega^m.
\end{align}
It can be calculated by the formula $\delta(X) = \Lambda d (i_X
\omega)$, where $\Lambda$ denotes contraction with the K\"ahler form.
Eventhough the divergence only depends on the volume, which is
independent of the of the particular K\"ahler structure, it can be
expressed in terms of the Levi-Civita connection on $M_\sigma$ by
$\delta(X) = \tr \nabla_\sigma X$.

Inspired by this expression, we define the divergence of a symmetric
bivector field $B \in C^\infty(M, S^2(TM_{\bC}))$ by
\begin{align*}
  \delta_\sigma(B) = \tr \nabla_\sigma B.
\end{align*}
Notice that the divergence on bivector fields does depend on the point
$\sigma \in \mathcal{T}$.

Suppose $V$ is a vector field on $\T$. Then, we can differentiate $I$
along $V$ and we denote this derivative by $V[I] : \T \ra
C^\infty(M,\End(TM_\bC))$. Differentiating the equation $I^2 = -\Id$,
we see that $V[I]$ anti-commutes with $I$. Hence, we get that
\[V[I]_\s \in C^\infty(M, (T_\s^*\otimes \bT_\s)\oplus
(\bT_\s^*\otimes T_\s))\] for each $\s\in \T$. Let
\begin{align*}
  V[I]_\s = V[I]'_\s + V[I]''_\s
\end{align*}
be the corresponding decomposition such that $V[I]'_\s\in C^\infty(M,
\bT_\s^*\otimes T_\s)$ and $V[I]''_\s\in C^\infty(M, T_\s^*\otimes
\bT_\s)$.

Now we will further assume that $\T$ is a complex manifold and that
$I$ is a holomorphic map from $\T$ to the space of all complex
structures on $M$.  Concretely, this means that
\[V'[I]_\s = V[I]'_\s\] and
\[V''[I]_\s = V[I]''_\s\] for all $\s\in \T$, where $V'$ means the
$(1,0)$-part of $V$ and $V''$ means the $(0,1)$-part of $V$ over $\T$.

Let us define $\tG(V) \in C^\infty(M , TM_\bC\otimes TM_\bC)$ by
\[V[I] = \tG(V) \omega,\] and define $G(V) \in C^\infty(M, T_\s
\otimes T_\s)$ such that
\[\tG(V) = G(V) + {\overline G(V)} \]
for all real vector fields $V$ on $\T$. We see that $\tG$ and $G$ are
one-forms on $\T$ with values in $C^\infty(M , TM_\bC\otimes TM_\bC)$
and $C^\infty(M, T_\s \otimes T_\s)$, respectively.  We observe that
\[V'[I] = G(V)\omega,\] and $G(V) = G(V')$.

Using the relation \eqref{eq:3}, one checks that
\begin{align*}
  \tilde G(V) = - V[g^{-1}],
\end{align*}
where $g^{-1} \in C^\infty(M, S^2(TM))$ is the symmetric bivector
field obtained by raising both indices on the metric tensor.  Clearly,
this implies that $\tG$ takes values in $C^\infty(M , S^2(TM_\bC))$
and therefore that $G$ takes values in $C^\infty(M, S^2(T_\s))$.

On $\L^k$, we have the smooth family of $\bar\partial$-operators
$\nabla^{0,1}$ defined at $\s\in \T$ by
\[\nabla^{0,1}_\s = \frac12 (1+i I_\s)\nabla.\]
For every $\sigma\in \T$, we consider the finite-dimensional subspace
of $C^\infty(M,\L^k)$ given by
\[H_\sigma^{(k)} = H^0(M_\s, \L^k) = \{s\in C^\infty(M, \L^k)|
\nabla^{0,1}_\s s =0 \}.\] Let $\Nablat$ denote the trivial connection
in the trivial bundle $\mathcal{H}^{(k)} = \T\times C^\infty(M,\L^k)$,
and let $\D(M,\L^k)$ denote the vector space of differential operators
on $C^\infty(M,\L^k)$. For any smooth one-form $u$ on $\T$ with values
in $\D(M,\L^k)$, we have a connection $\Nabla$ in $\cH^{(k)}$ given by
$$\Nabla_V = \Nablat_V - u(V)$$
for any vector field $V$ on $\T$.

\begin{lemma}
  The connection $\Nabla$ in $\cH^{(k)}$ preserves the subspaces
  $H^{(k)}_\sigma \subset C^\infty(M,\L^k)$, for all $\sigma \in \T$,
  if and only if
  \begin{equation}
    \frac{i}2  V[I] \nabla^{1,0} s + \nabla^{0,1}u(V)s = 0\label{eqcond}
  \end{equation}
  for all vector fields $V$ on $\T$ and all smooth sections $s$ of
  $H^{(k)}$.
\end{lemma}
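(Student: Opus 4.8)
The plan is to obtain the condition \eqref{eqcond} by differentiating, along the $\T$-directions, the very equation that cuts out the subspaces $H^{(k)}_\sigma$, namely $\nabla^{0,1}_\s s = 0$, and then comparing with the definition $\Nabla_V = \Nablat_V - u(V)$. First I would unwind what ``preserves the subspaces'' means: a smooth section $s$ of the subbundle $H^{(k)}$ is a smooth map $s:\T \to C^\infty(M,\L^k)$ with $\nabla^{0,1}_\s s = 0$ at every $\s$, and the preservation property is precisely the statement that $\nabla^{0,1}_\s(\Nabla_V s) = 0$ for every such $s$ and every vector field $V$ on $\T$.

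The key computation is to differentiate $\nabla^{0,1}_\s s = 0$ in the direction $V$. Writing $\nabla^{0,1}_\s = \frac12(1 + iI_\s)\nabla$ with $\nabla$ independent of $\s$, and using that $\Nablat_V s = V[s]$ is the directional derivative in the trivial connection (which commutes with the fixed $\nabla$), the product rule gives
\[
0 = V[\nabla^{0,1}_\s s] = \frac{i}{2}V[I]\nabla s + \nabla^{0,1}_\s(\Nablat_V s).
\]
Since $s(\s) \in H^{(k)}_\sigma$ means $\nabla^{0,1}_\s s = 0$, we have $\nabla s = \nabla^{1,0}_\s s$, so the first term may be rewritten, yielding the identity
\[
\nabla^{0,1}_\s(\Nablat_V s) = -\frac{i}{2}V[I]\nabla^{1,0}_\s s. \qquad (\star)
\]

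With $(\star)$ in hand the equivalence is immediate: applying $\nabla^{0,1}_\s$ to $\Nabla_V s = \Nablat_V s - u(V)s$ and substituting $(\star)$ gives
\[
\nabla^{0,1}_\s(\Nabla_V s) = -\frac{i}{2}V[I]\nabla^{1,0}_\s s - \nabla^{0,1}_\s(u(V)s),
\]
so that $\Nabla_V s$ lies in $H^{(k)}_\sigma$ for all $s$ and $V$ if and only if the right-hand side vanishes, which is exactly \eqref{eqcond}.

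The one point requiring care --- and the main obstacle, such as it is --- concerns the quantifiers in the two implications. For the ``$\Leftarrow$'' direction the computation shows directly that \eqref{eqcond} forces $\nabla^{0,1}_\s(\Nabla_V s) = 0$ on every section, hence $\Nabla$ preserves $H^{(k)}$. For ``$\Rightarrow$'' I would observe that both terms in \eqref{eqcond} depend only on the value $s(\s) \in H^{(k)}_\sigma$, through differential operators on $M$ applied to it, and not on the derivatives of $s$ in the $\T$-directions; thus \eqref{eqcond} is a genuinely pointwise-in-$\s$ condition on elements of $H^{(k)}_\sigma$. Since every $s_0 \in H^{(k)}_\sigma$ is realized as $s(\s)$ for some smooth section $s$ of the subbundle $H^{(k)}$ (using that it is a smooth subbundle), the preservation property --- the vanishing of $\nabla^{0,1}_\s(\Nabla_V s)$ for all sections --- is equivalent to \eqref{eqcond} holding for all smooth sections $s$ of $H^{(k)}$ and all $V$, which completes the proof.
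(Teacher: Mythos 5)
Your proof is correct and is exactly the intended argument: the paper itself defers the proof to \cite{A5}, where it is obtained precisely by differentiating $\nabla^{0,1}_\s s=0$ along $V$, using that $\nabla$ is $\s$-independent so that $V[\nabla^{0,1}_\s s]=\frac{i}{2}V[I]\nabla s+\nabla^{0,1}_\s(\Nablat_V s)$, and replacing $\nabla s$ by $\nabla^{1,0}_\s s$ on holomorphic sections. Your attention to the quantifier issue in the ``only if'' direction (realizing any $s_0\in H^{(k)}_\s$ as the value of a smooth section of the subbundle) is a point the paper handles only implicitly, so nothing is missing.
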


This result is not surprising. See \cite{A5} for a proof this
lemma. Observe that if this condition holds, we can conclude that the
collection of subspaces $H^{(k)}_\sigma \subset C^\infty(M,\L^k)$, for
all $\sigma \in \T$, form a subbundle $H^{(k)}$ of $\cH^{(k)}$.

We observe that $u(V'') = 0$ solves \eqref{eqcond} along the
anti-holomorphic directions on $\T$ since
\[V''[I] \nabla^{1,0} s = 0.\] In other words, the $(0,1)$-part of the
trivial connection $\Nablat$ induces a $\bar\partial$-operator on
$H^{(k)}$ and hence makes it a holomorphic vector bundle over $\T$.

This is of course not in general the situation in the $(1,0)$
direction. Let us now consider a particular $u$ and prove that it
solves \eqref{eqcond} under certain conditions.

On the K\"{a}hler manifold $(M_\s,\omega)$, we have the K\"{a}hler
metric and we have the Levi-Civita connection $\nabla$ in $T_\s$. We
also have the Ricci potential $F_\s\in C^\infty_0(M,\bR)$. Here
\[C^\infty_0(M,\bR) = \left\{ f\in C^\infty(M,\bR) \mid \int_M f
  \omega^m = 0\right\}, \] and the Ricci potential is the element of
$F_\s\in C^\infty_0(M,\bR)$ which satisfies
\[\Ric_\s = \Ric_\s^H + 2 i \partial_\s\dbar_\s F_\s,\]
where $\Ric_\s\in \Omega^{1,1}(M_\s)$ is the Ricci form and
$\Ric_\s^H$ is its harmonic part. We see that we get in this way a
smooth function $F : \T \ra C^\infty_0(M,\bR)$.

For any symmetric bivector field $B\in C^\infty(M, S^2(TM))$ we get a
linear bundle map
\begin{align*}
  B \colon TM^* \ra TM
\end{align*}
given by contraction. In particular, for a smooth function $f$ on $M$,
we get a vector field $B d f \in C^\infty(M,TM)$.

We define the operator
\begin{eqnarray*}
  \Delta_B &: &C^\infty(M,\L^k) \stackrel{\nabla}{\ra} C^\infty(M,TM^*\otimes\L^k)
  \stackrel{B\otimes\Id}{\ra}
  C^\infty(M,TM \otimes \L^k) \\
  && \qquad \stackrel{\nabla_\s\otimes \Id +
    \Id\otimes \nabla}{\ra}
  C^\infty(M,TM^* \otimes TM \otimes\L^k)
  \stackrel{\tr}{\ra}C^\infty(M,\L^k).
\end{eqnarray*}
Let's give a more concise formula for this operator.  Define the
operator
\begin{align*}
  \nabla^2_{X,Y} = \nabla_X \nabla_Y - \nabla_{\nabla_X Y},
\end{align*}
which is tensorial and symmetric in the vector fields $X$ and
$Y$. Thus, it can be evaluated on a symmetric bivector field and we
have
\begin{align*}
  \Delta_B = \nabla^2_B + \nabla_{\delta(B)}.
\end{align*}

Putting these constructions together, we consider, for some $n\in \bZ$
such that $2k+n \neq 0$, the following operator
\begin{equation}
  u(V) = \frac1{k+n/2}o(V) - V'[F],\label{equ}
\end{equation}
where
\begin{equation}
  o(V) = - \frac{1}{4} (\Delta_{G(V)} + 2\nabla_{G(V)dF} - 2n V'[F]).\label{eqo}
\end{equation}

The connection associated to this $u$ is denoted $\Nabla$, and we call
it the {\em Hitchin connection} in $\cH^{(k)}$.

\begin{definition}\label{Ridig}
  We say that the complex family $I$ of K\"{a}hler structures on
  $(M,\omega)$ is {\em Rigid} if
  \[\dbar_\sigma (G(V)_\sigma) = 0 \]
  for all vector fields $V$ on $\T$ and all points $\sigma\in \T$.
\end{definition}

We will assume our holomorphic family $I$ is rigid.

\begin{theorem}[Andersen]\label{HCE}
  Suppose that $I$ is a rigid family of K\"{a}hler structures on the
  compact, prequantizable symplectic manifold $(M,\omega)$ which
  satisfies that there exists an $n\in \bZ$ such that the first Chern
  class of $(M,\omega)$ is $n [\frac{\omega}{2\pi}]\in H^2(M,\bZ)$ and
  $H^1(M,\bR) = 0$. Then $u$ given by \eqref{equ} and \eqref{eqo}
  satisfies \eqref{eqcond}, for all $k$ such that $2k+n \neq 0$.
\end{theorem}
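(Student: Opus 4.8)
The plan is to verify the pointwise identity \eqref{eqcond} directly, reducing it to an identity between differential operators acting on a holomorphic section $s$. First I would use, as already observed in the text, that $u(V'')=0$ solves \eqref{eqcond} in the anti-holomorphic directions because $V''[I]\nabla^{1,0}s=0$; hence it suffices to treat $V=V'$ of type $(1,0)$, so that $V[I]=V'[I]=G(V)\omega$. Substituting \eqref{equ} and using that $s$ is holomorphic (so $\nabla^{0,1}s=0$ and $\nabla^{0,1}(V'[F]\,s)=(\dbar V'[F])\,s$), the condition \eqref{eqcond}, after multiplying through by $k+n/2$, becomes
\[
\tfrac{i}{2}\,(k+\tfrac n2)\,G(V)\omega\,\nabla^{1,0}s \;+\; \nabla^{0,1}\bigl(o(V)s\bigr)\;-\;(k+\tfrac n2)\,(\dbar V'[F])\,s \;=\;0 .
\]
Thus everything comes down to computing $\nabla^{0,1}\bigl(o(V)s\bigr)$, and in particular $\nabla^{0,1}\bigl(\Delta_{G(V)}s\bigr)$, and matching a term linear in $k$ against a $k$-independent remainder.

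For the main computation I would work in a local holomorphic frame and write $\Delta_{G(V)}s=\nabla^2_{G(V)}s+\nabla_{\delta(G(V))}s$, where $\nabla^2_{G(V)}$ contracts $G(V)\in S^2(T_\s)$ against two holomorphic covariant derivatives. Applying $\nabla^{0,1}$ and commuting it inward past these derivatives produces three kinds of terms: (i) terms in which $\nabla^{0,1}$ ultimately differentiates $s$, which vanish since $\nabla^{0,1}s=0$; (ii) terms in which $\nabla^{0,1}$ differentiates the coefficient $G(V)$, which vanish by the rigidity hypothesis $\dbar_\s(G(V)_\s)=0$; and (iii) curvature terms coming from the commutators $[\nabla^{0,1},\nabla^{1,0}]$. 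The curvature terms split according to the two factors on which $\nabla$ acts: the prequantum curvature on $\L^k$, namely $F_{\nabla}=-i\omega$ scaled by $k$, and the Levi--Civita curvature on the $T_\s$-factor, namely the Riemann/Ricci tensor. The first yields the leading term, proportional to $k\,G(V)\omega\,\nabla^{1,0}s$, which is exactly what is needed to cancel the $k$-part of the first term in the displayed equation; the second yields $k$-independent contributions involving $\Ric_\s$.

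It then remains to dispose of the $k$-independent Ricci contributions. Here I would invoke the hypothesis $c_1(M,\omega)=n[\omega/2\pi]$: since $\Ric^H_\s$ is the harmonic representative of $[\Ric_\s]=2\pi c_1=n[\omega]$ and $n\omega$ is harmonic, uniqueness gives $\Ric^H_\s=n\omega$, and the defining relation $\Ric_\s=\Ric^H_\s+2i\partial_\s\dbar_\s F_\s$ lets me replace every occurrence of the full Ricci form by $n\omega+2i\partial_\s\dbar_\s F_\s$. The $n\omega$ part combines with the prequantum contribution to shift $k$ to $k+n/2$ and produces the $-2nV'[F]$ term recorded in \eqref{eqo}; the $\partial\dbar F$ part, together with the divergence term $\nabla_{\delta(G(V))}$ and the $(\dbar V'[F])$ term, is exactly cancelled by the $2\nabla_{G(V)dF}$ summand built into $o(V)$. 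Matching the coefficients of $\nabla^{1,0}s$ and of $s$ separately then yields \eqref{eqcond}.

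The main obstacle is the bookkeeping in step (iii): one must carefully separate the prequantum curvature (which scales with $k$) from the Riemann curvature, keep track of the order in which covariant derivatives are commuted, and repeatedly use the K\"ahler identities together with the symmetry $G(V)\in S^2(T_\s)$. The rigidity hypothesis is essential precisely because it annihilates the otherwise uncontrollable terms of type (ii) in which $\dbar$ lands on $G(V)$; and the Chern-class hypothesis is what turns the Ricci curvature into a multiple of $\omega$ plus a Ricci-potential correction, so that the carefully chosen lower-order terms $2\nabla_{G(V)dF}$ and $-2nV'[F]$ can absorb everything that is not the desired leading term. Verifying that these cancellations are exact — rather than merely of the right form — is the crux of the argument.
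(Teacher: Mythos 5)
Your overall strategy---reducing to the $(1,0)$-direction, computing $\nabla^{0,1}(\Delta_{G(V)}s + 2\nabla_{G(V)dF}s)$ by commuting $\nabla^{0,1}$ inward, using rigidity to kill the terms in which $\dbar$ lands on $G(V)$, and separating the prequantum curvature (which scales with $k$) from the Ricci contributions---is exactly the content of Lemma \ref{dbarl}, and that part of the sketch is sound. The gap is in your final step. Once the curvature bookkeeping is done, the surviving zeroth-order terms cancel if and only if
\[
4i\,\dbar_\s(V'[F]_\s) \;=\; 2\,(G(V)\,dF)_\s\,\omega \;+\; \delta_\s(G(V))_\s\,\omega,
\]
which is Lemma \ref{Vriccipot}, and this is \emph{not} a pointwise identity that the $2\nabla_{G(V)dF}$ summand can absorb by local algebra. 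The Ricci potential $F_\s$ is defined only through the global harmonic decomposition $\Ric_\s = \Ric_\s^H + 2i\partial_\s\dbar_\s F_\s$, so its variation $V'[F]$ is not accessible by commuting covariant derivatives. The paper's route is: differentiate the defining relation to obtain $4i\partial\dbar V'[F] = 2(V'[\Ric])^{1,1} + 2\partial V'[I]dF$; compute the variation of the Ricci form (Lemma \ref{lemma4}: $2(V'[\Ric])^{1,1} = \partial(\delta(G(V))\omega)$); conclude that the $(0,1)$-form $2(G(V)dF)\omega + \delta(G(V))\omega - 4i\dbar V'[F]$ is $\partial$-closed; use Lemma \ref{dbarl} to see it is also $\dbar$-closed, hence closed; and finally invoke $H^1(M,\bR)=0$ to conclude it is exact and therefore zero, being of type $(0,1)$.

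The tell-tale sign is that your proposal never uses the hypothesis $H^1(M,\bR)=0$, which appears in the statement of the theorem precisely for this step. Your assertion that ``the $\partial\dbar F$ part, together with the divergence term $\nabla_{\delta(G(V))}$ and the $\dbar V'[F]$ term, is exactly cancelled by the $2\nabla_{G(V)dF}$ summand'' is the conclusion of Lemma \ref{Vriccipot} stated without the global argument that proves it; on a manifold with $H^1(M,\bR)\neq 0$ the closed $(0,1)$-form above need not vanish, and the cancellation can fail. To complete the proof you need to add the variation formula for the Ricci form and the cohomological argument that forces the resulting closed one-form to vanish.
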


Hence, the Hitchin connection $\Nabla$ preserves the subbundle
$H^{(k)}$ under the stated conditions. Theorem \ref{HCE} is
established in \cite{A5} through the following three lemmas.

\begin{lemma} \label{dbarl} Assume that the first Chern class of
  $(M,\omega)$ is $n [\frac{\omega}{2\pi}]\in H^2(M,\bZ)$. For any
  $\s\in \T$ and for any $G\in H^0(M_\s, S^2(T_\s))$, we have the
  following formula
  \begin{align*}
    \nabla^{0,1}_\s (\Delta_G(s) + 2 \nabla_{G d F_\s}(s)) = - i (2 k
    + n) \omega G \nabla (s) + 2 ik (G dF_\sigma) \omega + ik
    \delta_\sigma(G)\omega) s,
  \end{align*}
  for all $s\in H^0(M_\s, \L^k)$.
\end{lemma}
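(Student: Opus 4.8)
The plan is to establish the identity by a direct computation in a local holomorphic frame for $(M_\s, I_\s)$, exploiting the two holomorphicity hypotheses at our disposal: $\nabla^{0,1}_\s s = 0$, because $s \in H^0(M_\s, \L^k)$, and $\dbar_\s G = 0$, because $G \in H^0(M_\s, S^2(T_\s))$. Writing $\nabla^{0,1}_\s = \frac12(1 + iI_\s)\nabla$ as the anti-holomorphic part of $\nabla$, so that it amounts to applying covariant derivatives $\nabla_{\bar k}$ in anti-holomorphic directions, and expanding $\Delta_G = \nabla^2_G + \nabla_{\delta_\s(G)}$, I would reduce the left-hand side to a sum of terms in which a single $\nabla_{\bar k}$ acts on expressions built out of $G$, $dF_\s$ and holomorphic covariant derivatives of $s$. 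Two simplifications are then immediate: every term in which $\nabla_{\bar k}$ lands directly on $G$ drops out since $\dbar_\s G = 0$, and every term in which $\nabla_{\bar k}$ has been commuted all the way onto $s$ drops out since $\nabla^{0,1}_\s s = 0$. Thus the entire right-hand side must be produced by commutators.

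The computation rests on two commutation identities. On $\L^k$, the curvature condition of Definition \ref{prequantumb} gives $[\nabla_{\bar k}, \nabla_j]s = -ik\,\omega(\partial_{\bar k}, \partial_j)s$; this is the sole source of the factors of $k$. On $T_\s$, the corresponding commutator is the Riemann curvature of the K\"{a}hler metric $g_\s$. Applying $\nabla_{\bar k}$ to the second-order part $\nabla^2_G s$ and pushing it to the right through the two holomorphic derivatives, I would use the line-bundle identity twice; since $\omega$ is parallel, the two resulting contributions collapse, by the symmetry of $G$, into a single term proportional to $\omega\,G\nabla(s)$ with coefficient $2k$, which is the $2k$ inside $2k+n$. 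Commuting $\nabla_{\bar k}$ through the tangent index of $\nabla s$, and separately differentiating the non-holomorphic divergence through $\nabla_{\bar k}\delta_\s(G)$ (using that $[\nabla_{\bar k}, \nabla_a]$ acts on $G$ by the Riemann tensor), produces Riemann-curvature terms. A crucial feature of the combination $\Delta_G = \nabla^2_G + \nabla_{\delta_\s(G)}$ is that the non-Ricci parts of these terms cancel, leaving precisely a single contraction of the Ricci tensor $\Ric_\s$ with $G\nabla(s)$.

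The decisive step is to match this Ricci contribution with the Ricci potential. I would invoke the defining equation $\Ric_\s = \Ric_\s^H + 2i\partial_\s\dbar_\s F_\s$ together with the hypothesis $c_1(M) = n[\frac{\omega}{2\pi}]$: since $\omega$ is harmonic and $[\Ric_\s] = n[\omega]$, the harmonic part is forced to be $\Ric_\s^H = n\omega$. The harmonic part then supplies exactly the missing term proportional to $n\,\omega\,G\nabla(s)$, completing the coefficient to $2k+n$. The non-harmonic part $2i\partial_\s\dbar_\s F_\s$ contributes a term in which the $(1,1)$-Hessian of $F_\s$ is contracted with $G\nabla(s)$, and this is cancelled by the Hessian term $\partial_{\bar k}\partial_j F_\s$ that appears when $\nabla_{\bar k}$ is differentiated onto the factor $dF_\s$ in $2\nabla_{G dF_\s}s$. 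This cancellation is the reason for including the extra operator $2\nabla_{GdF}$ in the definition of $o(V)$, and carrying it out exactly, with the correct factor of $2$ and the correct sign so that the two Hessian terms annihilate, is where I expect the main difficulty to lie.

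All that then remains are the two purely scalar terms. Applying the line-bundle identity to the surviving holomorphic derivative in $2\nabla_{G dF_\s}s$ yields the contribution $2ik\,(G dF_\s)\omega\, s$, and applying it to $\nabla_{\delta_\s(G)}s$ yields $ik\,\delta_\s(G)\omega\, s$. Collecting the term proportional to $(2k+n)\,\omega\,G\nabla(s)$ together with these two scalar terms gives the asserted formula. Beyond this final assembly, the genuine labor is curvature bookkeeping: tracking which commutator produces $\omega$ and which produces the Riemann tensor, reducing Riemann to Ricci through the symmetries of the K\"{a}hler curvature tensor, and fixing every numerical coefficient and sign against the conventions fixed by Definition \ref{prequantumb} and the formula $\Delta_B = \nabla^2_B + \nabla_{\delta(B)}$.
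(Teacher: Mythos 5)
The paper states Lemma \ref{dbarl} without proof, deferring to \cite{A5}; your outline follows the same direct local computation carried out there, and it correctly identifies every structural feature of that argument: the two uses of the prequantum curvature $-ik\omega$ producing the $-2ik\,\omega G\nabla(s)$, $2ik(GdF_\s)\omega\,s$ and $ik\,\delta_\s(G)\omega\,s$ terms, the cancellation of the full Riemann contributions between $\nabla^2_G$ and $\nabla_{\delta(G)}$ leaving only a Ricci contraction, and the splitting $\Ric_\s = n\omega + 2i\partial_\s\dbar_\s F_\s$ whose harmonic part completes the coefficient to $2k+n$ while the $\partial_\s\dbar_\s F_\s$ part cancels against the Hessian produced by $2\nabla_{GdF_\s}$. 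The only thing you have not done is the explicit index and sign bookkeeping you yourself flag, but since your plan correctly predicts the outcome of each of these steps (including the two subtle cancellations that motivate the precise form of $o(V)$), I regard the proposal as a correct proof in the same spirit as the reference.
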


\begin{lemma}\label{Vriccipot}
  We have the following relation
  \begin{align*}
    4i \bar\partial_\s (V'[F]_\s) = 2 (G(V) dF)_\sigma \omega +
    \delta_\sigma (G(V))_\sigma \omega,
  \end{align*}
  provided that $H^1(M,\bR) = 0$.
\end{lemma}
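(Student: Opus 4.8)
The plan is to differentiate the defining relation of the Ricci potential, $\Ric_\s = \Ric_\s^H + 2i\partial_\s\dbar_\s F_\s$, in the holomorphic direction $V'$ on $\T$, and to extract from the result an identity of $(0,1)$-forms on $M_\s$. Because the type decomposition determined by $I_\s$ itself varies with $\s$, the derivative $V'[\partial_\s\dbar_\s F_\s]$ splits into the desired term $\partial_\s\dbar_\s(V'[F])$ together with correction terms coming from the $\s$-dependence of the operators $\partial_\s,\dbar_\s$ and of the forms $\Ric_\s$ and $\Ric_\s^H$. My strategy is to first prove the $(1,1)$-form identity obtained by applying $\partial_\s$ to the asserted equality, and then to upgrade it to the stated $(0,1)$-form identity by a Hodge-theoretic argument using $H^1(M,\bR)=0$.

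For the computation itself I would exploit two structural facts. First, the Liouville volume $\omega^m/m!$ is independent of $\s$; this kills the variation of the volume and makes $V'[\Ric_\s]$ tractable, since with fixed volume the variation of the Ricci form under the change of complex structure $V'[I]=G(V)\omega$ is expressible through $G(V)$ and its covariant derivatives, the trace terms collapsing to the single contribution $\delta_\s(G(V))\omega$. Second, writing $\dbar_\s$ on functions as the $(0,1)$-projection $\tfrac12(1+iI_\s^{t})d$, the variation of the operator is $V'[\dbar_\s]F_\s=\tfrac{i}{2}(V'[I])^{t}dF$; contracting with $\omega$ and using that $G(V)\in S^2(T_\s)$ is symmetric, this is a multiple of $(G(V)dF)\omega$. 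Taking the $(1,1)$-part of the differentiated Ricci relation and collecting these contributions, I expect to reach
\[4i\,\partial_\s\dbar_\s(V'[F]) = \partial_\s\bigl(2(G(V)dF)_\s\,\omega + \delta_\s(G(V))_\s\,\omega\bigr).\]

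To pass from this $(1,1)$-identity to the statement, set $\eta := 4i\,\dbar_\s(V'[F]) - 2(G(V)dF)_\s\,\omega - \delta_\s(G(V))_\s\,\omega$, a $(0,1)$-form. The identity above says precisely that $\partial_\s\eta = 0$. The first summand of $\eta$ is $\dbar_\s$-exact, hence $\dbar_\s$-closed, and a direct computation using rigidity, $\dbar_\s(G(V))=0$, together with $\dbar_\s\omega=0$, shows that the remaining two summands together are also $\dbar_\s$-closed; thus $\dbar_\s\eta=0$ as well, so $\eta$ is a $d$-closed $(0,1)$-form. Now $H^1(M,\bR)=0$ forces $H^1(M,\bC)=0$, hence $H^{0,1}(M_\s)=0$, so $\eta$ is $d$-exact; being of pure type $(0,1)$ its primitive is a holomorphic function, constant on the compact $M$, whence $\eta=0$. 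This is the asserted equality.

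The step I expect to be the main obstacle is the explicit variational computation of the Ricci form with the symplectic form held fixed: one must control the $\s$-dependent type decomposition, isolate the correct $(1,1)$-component, and check that the fixed-volume condition eliminates every trace term except $\delta_\s(G(V))\omega$ with the stated coefficient. The variation $V'[\Ric_\s^H]$ of the harmonic representative requires separate care; since $\Ric_\s^H$ always lies in the fixed class $2\pi c_1(M)$ its derivative is exact, and I expect its $(1,1)$-part, being an exact harmonic form, to vanish, while its $(2,0)$ and $(0,2)$ components do not affect the $(1,1)$-identity and are in any event annihilated by the final $H^{0,1}=0$ step.
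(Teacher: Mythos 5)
Your outline reproduces the paper's own derivation of Lemma \ref{Vriccipot}: differentiate $\Ric = \Ric^H + 2i\partial\bar\partial F$ along $V'$, use the variational formula $2(V'[\Ric])^{1,1} = \partial(\delta(G(V))\omega)$ (this is exactly the paper's Lemma \ref{lemma4}, i.e.\ precisely the ``main obstacle'' you isolate) to get the $(1,1)$-identity, observe that the resulting $(0,1)$-form $\eta$ is $\partial_\s$-closed, show it is also $\bar\partial_\s$-closed, and then kill it via $H^1(M,\bR)=0$ and the type argument (small slip: the primitive $g$ with $dg=\eta$ satisfies $\partial_\s g=0$, so it is anti-holomorphic rather than holomorphic; the conclusion that it is constant on compact $M$ is unchanged). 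Two sub-steps differ from the paper and are soft as you state them. First, for $\bar\partial_\s$-closedness the paper does not perform a direct tensor computation: it deduces it from Lemma \ref{dbarl}, essentially by applying $\nabla^{0,1}_\s$ once more and using that the $(0,2)$-part of the curvature of $\L^k$ vanishes. Your proposed direct computation can be carried out, but rigidity and $\bar\partial_\s\omega=0$ alone do not suffice: commuting $\bar\partial_\s$ past $\delta_\s=\tr\nabla_\s$ produces Ricci-curvature terms, and these cancel against $\bar\partial_\s\bigl(2(G(V)dF)\omega\bigr)$ only after substituting the defining relation $\Ric = n\omega + 2i\partial\bar\partial F$; so the Ricci-potential equation is an essential further input to that step. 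Second, your handling of $V'[\Ric^H]$ is both unnecessary and not quite correct as reasoned: the derivative of a $\s$-dependent family of harmonic representatives need not itself be harmonic, so ``exact harmonic form, hence zero'' is not a valid inference here. The paper sidesteps this entirely, since under the standing hypothesis $c_1(M,\omega)=n[\frac{\omega}{2\pi}]$ one has $\Ric^H_\s = n\omega$ for every $\s$, whence $V'[\Ric^H]=0$ identically.
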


\begin{lemma}\label{lemma4}
  For any smooth vector field $V$ on $\T$, we have that
  \begin{equation}
    2(V'[\Ric])^{1,1} =  \partial (\delta(G(V)) \omega).
  \end{equation}
\end{lemma}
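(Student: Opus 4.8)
The plan is to reduce the variation of the Ricci form to the variation of the Ricci potential and then feed in Lemma~\ref{Vriccipot}. Working under the standing assumptions of the section (rigidity, and $c_1(M)=n[\tfrac{\omega}{2\pi}]$), the harmonic part $\Ric^H_\sigma$ is independent of $\sigma$: since $[\Ric_\sigma]=2\pi c_1(M)=n[\omega]$ and the K\"ahler form is always $g_\sigma$-harmonic, $n\omega$ is the unique $g_\sigma$-harmonic representative of this class, so $\Ric^H_\sigma=n\omega$ for all $\sigma$. Differentiating $\Ric_\sigma=\Ric^H_\sigma+2i\partial_\sigma\dbar_\sigma F_\sigma$ along the $(1,0)$-direction $V'$ then gives $V'[\Ric]=2i\,V'[\partial_\sigma\dbar_\sigma F_\sigma]$.

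First I would expand by the product rule, separating the three sources of $\sigma$-dependence, namely the operators $\partial_\sigma,\dbar_\sigma$ and the potential $F_\sigma$:
\[ V'[\partial_\sigma\dbar_\sigma F_\sigma] = (V'[\partial_\sigma])(\dbar F) + \partial((V'[\dbar_\sigma])F) + \partial\dbar(V'[F]). \]
The variations of the Dolbeault operators are Kodaira--Spencer terms governed by the Beltrami differential $\mu=V'[I]_\sigma=G(V)\omega\in C^\infty(M,\bT_\sigma^*\otimes T_\sigma)$. Passing to a moving (co)frame for $I_\sigma$, one checks that in the $V'$-direction only the $(1,0)$-coframe tilts, acquiring a $(0,1)$-component proportional to $\mu$, while the $(0,1)$-coframe is stationary to first order.

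The key observations are then: (i) the first term $(V'[\partial_\sigma])(\dbar F)$ has vanishing $(1,1)$-part, because the $V'$-variation can only raise anti-holomorphic degree, so it carries the $(0,1)$-form $\dbar F$ — whose exterior derivative $\partial\dbar F$ has no $(2,0)$-component — entirely into bidegree $(0,2)$; (ii) the second term satisfies $(V'[\dbar_\sigma])F\propto (G(V)dF)\omega$, so after applying $\partial$ it contributes a multiple of $\partial((G(V)dF)\omega)$; (iii) the third term is already of type $(1,1)$, and Lemma~\ref{Vriccipot} rewrites it as $2i\,\partial(\tfrac{1}{4i}(2(G(V)dF)\omega+\delta(G(V))\omega)) = \partial((G(V)dF)\omega)+\tfrac12\partial(\delta(G(V))\omega)$. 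The plan is then to verify that the constant in (ii) is exactly such that the two $\partial((G(V)dF)\omega)$-terms cancel, leaving $(V'[\Ric])^{1,1}=\tfrac12\partial(\delta(G(V))\omega)$, which is the assertion.

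The hard part is the Kodaira--Spencer bookkeeping in step (ii): one must compute $(V'[\dbar_\sigma])F$ with the correct sign and constant from the explicit first-order deformation $\mu=G(V)\omega$, and confirm that it matches the $(G(V)dF)\omega$-term produced by Lemma~\ref{Vriccipot} so that the cancellation is exact; this forces one to reconcile the conventions for $\omega$, for $G(V)$, and for the metric dual. As a self-contained alternative that avoids the cohomological hypothesis, one can start instead from the local formula $\Ric_\sigma=-i\partial_\sigma\dbar_\sigma\log\det g_\sigma$, observe that $V'[\log\det g_\sigma]=0$ (the Hermitian volume is stationary in the $V'$-direction, again because only the $(1,0)$-coframe tilts), and reduce to $(V'[\Ric])^{1,1}=-i\,\partial((V'[\dbar_\sigma])\log\det g_\sigma)$; there the obstacle instead becomes the identification, using rigidity $\dbar_\sigma G(V)=0$ together with the K\"ahler identity $\partial_i\log\det g=\Gamma^k_{ki}$ and the curvature symmetries, of the resulting expression with the covariant divergence $\delta(G(V))=\tr\nabla G(V)$.
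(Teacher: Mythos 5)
Your main route is circular relative to the logical structure of the paper. In the text, Lemma \ref{Vriccipot} is not an independent input: it is \emph{derived from} Lemma \ref{lemma4} (together with Lemma \ref{dbarl} and the hypothesis $H^1(M,\bR)=0$) by exactly the computation you run in reverse --- one writes $V'[\Ric] = -dV'[I]dF + 2id\bar\partial V'[F]$, takes $(1,1)$-parts, substitutes Lemma \ref{lemma4}, and concludes that $2(G(V)dF)\omega + \delta(G(V))\omega - 4i\bar\partial V'[F]$ is a closed form of type $(0,1)$, hence exact, hence zero. Feeding Lemma \ref{Vriccipot} back into that same identity to extract Lemma \ref{lemma4} therefore proves nothing; it just undoes the paper's derivation. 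A structural warning sign you half-noticed yourself: Lemma \ref{lemma4} is stated for any smooth vector field $V$ with no cohomological hypothesis, while Lemma \ref{Vriccipot} explicitly requires $H^1(M,\bR)=0$, so the implication must go from \ref{lemma4} to \ref{Vriccipot} and not the other way.

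Your ``self-contained alternative'' is the right kind of argument --- the paper in fact offers no proof of Lemma \ref{lemma4} at all, deferring it to \cite{A5}, where it is established by just such a direct variational computation of the Ricci form. But as written it is only a sketch: the assertion $V'[\log\det g_\sigma]=0$ needs justification (the holomorphic coordinates move with $\sigma$, so this does not follow immediately from the volume form $\omega^m$ being fixed), and the entire content of the lemma --- identifying the surviving Kodaira--Spencer term with the covariant divergence $\delta(G(V))=\tr\nabla G(V)$ using rigidity and the K\"ahler and curvature identities --- is precisely the step you flag as ``the obstacle'' and leave undone. So neither branch of the proposal constitutes a proof: the first is circular within this paper, and the second is an honest plan with the decisive computation missing.
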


Let us here recall how Lemma \ref{Vriccipot} is derived from Lemma
\ref{lemma4}. By the definition of the Ricci potential
\[\Ric = \Ric^H + 2 i d \bar \partial F,\]
where $\Ric^H = n\omega$ by the assumption $c_1(M, \omega) =
n[\frac{\omega}{2\pi}]$. Hence
\[V'[\Ric] = - d V'[I] d F + 2 i d \bar \partial V'[F],\] and
therefore
\[ 4i \partial \bar \partial V'[F] = 2(V'[\Ric])^{1,1} + 2\partial
V'[I] d F.\] From the above, we conclude that
$$ (2 (G(V) d F) \omega + \delta(G(V)) \omega -
4i \bar \partial V'[F])_\sigma \in \Omega^{0,1}_\s(M)$$ is a
$\partial_\sigma$-closed one-form on $M$. From Lemma \ref{dbarl}, it
follows that it is also $\bar \partial_\sigma$-closed, whence it must
be a closed one-form. Since we assume that $H^1(M,\bR) = 0$, we see
that it must be exact. But then it in fact vanishes since it is of
type $(0,1)$ on $M_\s$.

From the above we conclude that
$$
u(V) = \frac1{k+n/2}o(V) - V'[F] = - \frac1{4k+2n} \left\{
  \Delta_{G(V)} + 2\nabla_{G(V)dF} + 4k V'[F]\right\}
$$
solves \eqref{eqcond}. Thus we have established Theorem \ref{HCE} and
hence Theorem \ref{MainGHCI}.

In \cite{AGL} we use half-forms and the metaplectic correction to
prove the existence of a Hitchin connection in the context of
half-form quantization. The assumption that the first Chern class of
$(M,\omega)$ is $n [\frac{\omega}{2\pi}]\in H^2(M,\bZ)$ is then just
replaced by the vanishing of the second Stiefel-Whitney class of $M$
(see \cite{AGL} for more details).

Suppose $\Gamma$ is a group which acts by bundle automorphisms of $\L$
over $M$ preserving both the Hermitian structure and the connection in
$\L$. Then there is an induced action of $\Gamma$ on $(M,\omega)$. We
will further assume that $\Gamma$ acts on $\T$ and that $I$ is
$\Gamma$-equivariant. In this case we immediately get the following
invariance.

\begin{lemma}
  The natural induced action of $\Gamma$ on $\cH^{(k)}$ preserves the
  subbundle $H^{(k)}$ and the Hitchin connection.
\end{lemma}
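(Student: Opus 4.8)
The plan is to first make the $\Gamma$-action on $\cH^{(k)}$ explicit, and then to verify separately that it preserves the holomorphic subbundle $H^{(k)}$ and that it intertwines the Hitchin connection $\Nabla$ with itself. For $\phi\in\Gamma$, the bundle automorphism of $\L$ covering $\phi\colon M\to M$ induces, through its $k$-th tensor power on fibres together with the base action, a linear automorphism $\Phi_\phi$ of the fixed vector space $C^\infty(M,\L^k)$. Combined with the action on $\T$, this determines the action on $\cH^{(k)}=\T\times C^\infty(M,\L^k)$, sending a section $s\colon\T\to C^\infty(M,\L^k)$ to the section $(\phi\cdot s)(\sigma)=\Phi_\phi\bigl(s(\phi^{-1}\sigma)\bigr)$. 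Recall also that the induced action on $M$ satisfies $\phi^*\omega=\omega$.

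To show that $H^{(k)}$ is preserved, I would combine the $\Gamma$-equivariance of $I$ with the hypothesis that $\phi$ preserves $\nabla$. Equivariance of $I$ says precisely that $d\phi$ conjugates $I_\sigma$ into $I_{\phi(\sigma)}$, that is, that $\phi$ is a biholomorphism $M_\sigma\to M_{\phi(\sigma)}$. Since the automorphism of $\L$ preserves $\nabla$, it therefore intertwines the $\bar\partial$-operators $\nabla^{0,1}_\sigma=\frac12(1+iI_\sigma)\nabla$ and $\nabla^{0,1}_{\phi(\sigma)}$. Consequently $\Phi_\phi$ carries the kernel $H^0(M_\sigma,\L^k)$ isomorphically onto $H^0(M_{\phi(\sigma)},\L^k)$, which is exactly the statement that the subbundle $H^{(k)}$ is preserved.

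For the Hitchin connection it suffices, in view of $\Nabla_V=\Nablat_V-u(V)$, to show that both $\Nablat$ and the one-form $u$ are $\Gamma$-equivariant. The automorphism $\Phi_\phi$ of the fibre does not depend on $\sigma$, hence it is parallel for the trivial connection, so $\Nablat$ is preserved. For $u$, given by \eqref{equ} and \eqref{eqo}, I would check naturality of each constituent: the K\"ahler metric $g_\sigma=\omega(\cdot,I_\sigma\cdot)$ is equivariant because $\omega$ and $I$ are, and hence so are its Levi-Civita connection $\nabla_\sigma$, the tensor $G(V)$ read off from $V[I]=\tG(V)\omega$, and the Ricci form together with its harmonic part. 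It then follows that the operators $\Delta_{G(V)}$, $\nabla_{G(V)dF}$ and $V'[F]$ transform equivariantly, which gives $\Phi_\phi\,u(V)_\sigma\,\Phi_\phi^{-1}=u(\phi_*V)_{\phi(\sigma)}$, the desired invariance of $\Nabla$.

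The step I expect to require the most care is the naturality of the Ricci potential $F_\sigma$, since it is singled out inside $C^\infty_0(M,\bR)$ by the Hodge-theoretic splitting $\Ric_\sigma=\Ric_\sigma^H+2i\partial_\sigma\dbar_\sigma F_\sigma$ and the normalisation $\int_M F_\sigma\,\omega^m=0$. Here I would use that $\phi\colon M_\sigma\to M_{\phi(\sigma)}$ is an isometry of K\"ahler manifolds, so that it commutes with the associated Laplacians and harmonic projections and preserves the volume form $\omega^m$; uniqueness of the Ricci potential in $C^\infty_0(M,\bR)$ then forces $\phi^*F_{\phi(\sigma)}=F_\sigma$. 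With this in hand, the remaining verifications are formal naturality statements for differential-geometric constructions, and no hypotheses beyond those already assumed are needed.
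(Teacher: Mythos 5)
Your argument is correct and is precisely the naturality verification that the paper treats as immediate (the lemma is stated there without proof, as a direct consequence of the assumptions that $\Gamma$ preserves the Hermitian structure and connection on $\L$ and that $I$ is $\Gamma$-equivariant). You rightly single out the equivariance of the Ricci potential as the only point needing an explicit uniqueness argument, and your treatment of it is sound.
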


We are actually interested in the induced connection $\Nablae$ in the
endomorphism bundle $\End(H^{(k)})$. Suppose $\Phi$ is a section of
$\End(H^{(k)})$. Then for all sections $s$ of $H^{(k)}$ and all vector
fields $V$ on $\T$, we have that
\[(\Nablae_V \Phi) (s) = \Nabla_V \Phi(s) - \Phi(\Nabla_V(s)).\]
Assume now that we have extended $\Phi$ to a section of $\Hom
(\cH^{(k)},H^{(k)})$ over $\T$. Then
\begin{equation}\label{endocon}
  \Nablae_V \Phi = \Nablaet_V \Phi + [\Phi, u(V)],
\end{equation}
where $\Nablaet$ is the trivial connection in the trivial bundle
$\End(\cH^{(k)})$ over $\T$.

\section{Toeplitz operators and Berezin-Toeplitz deformation
  quantization}\label{BZdq}

We shall in this section discuss the Toeplitz operators and their
asymptotics as the level $k$ goes to infinity. The properties we need
can all be derived from the fundamental work of Boutet de Monvel and
Sj\"{o}strand. In \cite{BdMS}, they did a microlocal analysis of the
Szeg\"{o} projection which can be applied to the asymptotic analysis
in the situation at hand, as it was done by Boutet de Monvel and
Guillemin in \cite{BdMG} (in fact in a much more general situation
than the one we consider here) and others following them. In
particular, the applications developed by Schlichenmaier and further
by Karabegov and Schlichenmaier to the study of Toeplitz operators in
the geometric quantization setting is what will interest us here. Let
us first describe the basic setting.

For each $f\in C^\infty(M)$, we consider the prequantum operator,
namely the differential operator $M_f^{(k)}: C^\infty(M,L^k) \ra
C^\infty(M,L^k)$ given by
\[M_f^{(k)}(s) = f s\] for all $s\in H^0(M,L^k)$.

These operators act on $C^\infty(M,\L^k)$ and therefore also on the
bundle $\cH^{(k)}$, however, they do not preserve the subbundle
$H^{(k)}$. In order to turn these operators into operators which acts
on $H^{(k)}$ we need to consider the Hilbert space structure.

Integrating the inner product of two sections against the volume form
associated to the symplectic form gives the pre-Hilbert space
structure on $C^\infty(M,\L^k)$
\[\langle s_1,s_2\rangle = \frac1{m!}\int_M (s_1,s_2) \omega^m.\]
We think of this as a pre-Hilbert space structure on the trivial
bundle $\cH^{(k)}$ which of course is compatible with the trivial
connection in this bundle.  This pre-Hilbert space structure induces a
Hermitian structure $\langle\cdot,\cdot\rangle$ on the finite rank
subbundle $H^{(k)}$ of $\cH^{(k)}$.  The Hermitian structure
$\langle\cdot,\cdot\rangle$ on $H^{(k)}$ also induces the operator
norm $\|\cdot\|$ on $\End(H^{(k)})$.

Since $H^{(k)}_\s$ is a finite dimensional subspace of
$C^\infty(M,\L^k)= \cH_\s^{(k)}$ and therefore closed, we have the
orthogonal projection $\pi_\s^{(k)} \colon \cH_\s^{(k)} \ra
H^{(k)}_\s$. Since $H^{(k)}$ is a smooth subbundle of $\cH^{(k)}$, the
projections $\pi_\s^{(k)}$ form a smooth map $\pi^{(k)}$ from $\T$ to
the space of bounded operators on the $L_2$-completion of
$C^\infty(M,\L^k)$. The easiest way to see this is to consider a local
frame $(s_1, \ldots s_{\rank H^{(k)}})$ of $H^{(k)}$. Let $h_{ij} =
\langle s_i, s_j\rangle$, and let $h^{-1}_{ij}$ be the inverse matrix
of $h_{ij}$. Then
\begin{equation}\label{projf}
  \pi_\s^{(k)}(s) = \sum_{i,j}\langle s, (s_i)_\s\rangle (h^{-1}_{ij})_\s (s_j)_\s.
\end{equation}

From these projections, we can construct the Toeplitz operators
associated to any smooth function $f\in C^\infty(M)$. It is the
operator $T^{(k)}_{f,\s} : \cH_\s^{(k)} \ra H^{(k)}_\s$ defined by
\[T_{f,\s}^{(k)}(s) = \pi_\s^{(k)}(fs)\] for any element $s$ in
$\cH_\s^{(k)}$ and any point $\s\in \T$.  We observe that the Toeplitz
operators are smooth sections $T_{f}^{(k)}$ of the bundle
$\Hom(\cH^{(k)},H^{(k)})$ and restrict to smooth sections of
$\End(H^{(k)})$.

\begin{remark}
  Similarly, for any Pseudo-differential operator $A$ on $M$ with
  coefficients in $\L^k$ (which may even depend on $\s\in \T$), we can
  consider the associated Toeplitz operator $\pi^{(k)} A$ and think of
  it as a section of $\Hom(\cH^{(k)},H^{(k)})$. However, whenever we
  consider asymptotic expansions of such or consider their operator
  norms, we implicitly restrict them to $H^{(k)}$ and consider them as
  sections of $\End(H^{(k)})$ or equivalently assume that they have
  been precomposed with $\pi^{(k)}$.

\end{remark}

Suppose that we have a smooth section $X\in C^\infty(M, T_\s)$ of the
holomorphic tangent bundle of $M_\s$. We then claim that the operator
$\pi^{(k)} \nabla_X$ is a zero-order Toeplitz operator. Supposing that
$s_1\in C^\infty(M,\L^k)$ and $s_2 \in H^0(M_\s,\L^k)$, we have that
\[X(s_1,s_2) = (\nabla_X s_1, s_2).\] Now, calculating the Lie
derivative along $X$ of $(s_1,s_2)\omega^m$ and using the above, one
obtains after integration that
\begin{align*}
  \langle \nabla_X s_1, s_2 \rangle = - \langle \delta(X) s_1, s_2
  \rangle,
\end{align*}
Thus
\begin{equation}\pi^{(k)} \nabla_X = -
  T_{\delta(X)}^{(k)},\label{1to0order}
\end{equation}
as operators from $C^\infty(M,L^k)$ to $H^0(M,L^k)$.

Iterating \eqref{1to0order}, we find for all $X_1,X_2 \in
C^\infty(M,T_\s)$ that
\begin{equation}\pi^{(k)} \nabla_{X_1}\nabla_{X_2} =
  T^{(k)}_{\delta(X_2)\delta(X_1)
    + X_2[\delta(X_1)]},\label{2to0order}
\end{equation}
again as operators from $C^\infty(M,\L^k)$ to $H^0(M_\s,\L^k)$.

We calculate the adjoint of $\nabla_X$ for any complex vector field
$X\in C^\infty(M, TM_{\bC})$. For $s_1, s_2 \in C^\infty(M,\L^k)$, we
have that
\[\bar X (s_1,s_2) = (\nabla_{\bar X} s_1, s_2) + ( s_1, \nabla_X
s_2).\] Computing the Lie derivative along $\bar X$ of
$(s_1,s_2)\omega^m$ and integrating, we get that
\[\langle \nabla_{\bar X} s_1, s_2 \rangle + \langle (\nabla_{X})^*
s_1, s_2 \rangle = - \langle \delta(\bar X) s_1, s_2 \rangle.\] Hence,
we see that
\begin{align}
  \label{eq:7}
  (\nabla_{X})^* = - \nabla_{\bar X} - \delta(\bar X)
\end{align}
as operators on $C^\infty(M, \L^k)$. In particular, if $X \in
C^\infty(M, T_\sigma)$ is a section of the holomorphic tangent bundle,
we see that
\begin{equation}\label{1to0order*}
  \pi^{(k)} (\nabla_{X})^* \pi^{(k)} = - T^{(k)}_{\delta(\bar X)} \vert _{H^0(M_\s,\L^k)},
\end{equation}
again as operators on $H^0(M_\s,\L^k)$.

The product of two Toeplitz operators associated to two smooth
functions will in general not be the Toeplitz operator associated to a
smooth function again.  But, by the results of Schlichenmaier
\cite{Sch}, there is an asymptotic expansion of the product in terms
of such Toeplitz operators on a compact K{\"a}hler manifold.

\begin{theorem}[Schlichenmaier]\label{S}
  For any pair of smooth functions $f_1, f_2\in \C^\infty(M)$, we have
  an asymptotic expansion
  \[T_{f_1,\s}^{(k)}T_{f_2,\s}^{(k)} \sim \sum_{l=0}^\infty
  T_{c_\s^{(l)}(f_1,f_2),\s}^{(k)} k^{-l},\] where
  $c_\s^{(l)}(f_1,f_2) \in C^\infty(M)$ are uniquely determined since
  $\sim$ means the following: For all $L\in \Z_+$ we have that
  \[\|T_{f_1,\s}^{(k)}T_{f_2,\s}^{(k)} - \sum_{l=0}^L
  T_{c_\s^{(l)}(f_1,f_2),\s}^{(k)} k^{-l}\| = O(k^{-(L+1)})\]
  uniformly over compact subsets of $\T$.  Moreover,
  $c_\s^{(0)}(f_1,f_2) = f_1f_2$.
\end{theorem}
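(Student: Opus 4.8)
The plan is to realize the level-$k$ Toeplitz operators as Fourier components of a single \emph{generalized Toeplitz operator} in the sense of Boutet de Monvel and Guillemin \cite{BdMG}, and then to read off the asymptotic expansion of a product from their symbol calculus. Concretely, I would let $\L^{-1}$ be the dual of the prequantum line bundle equipped with the dual Hermitian metric and set $X = \{v\in \L^{-1} : |v| = 1\}$, its unit circle bundle; since $(M_\s,\omega)$ is Kähler and $\L$ has curvature $-i\omega$, the boundary $X$ of the disc bundle is a strictly pseudoconvex CR manifold carrying the natural $S^1$-action. The weight decomposition of $L^2(X)$ under this action identifies the weight-$k$ subspace with $L^2(M,\L^k)$, and the Hardy space $\mathcal H(X)$ of $L^2$-boundary values of holomorphic functions on the disc bundle decomposes as $\mathcal H(X) = \bigoplus_{k\geq 0}\mathcal H_k$ with $\mathcal H_k \cong H^0(M_\s,\L^k)$. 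Under these identifications the Szegő projector $\Pi\colon L^2(X)\to \mathcal H(X)$ is $S^1$-invariant, so its weight-$k$ piece is exactly $\pi^{(k)}_\s$, and for $f\in C^\infty(M)$ pulled back to $X$ the operator $\Pi M_f \Pi$ restricts on weight $k$ to $T^{(k)}_{f,\s}$.

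Next I would invoke the microlocal description of $\Pi$ due to Boutet de Monvel and Sjöstrand \cite{BdMS}: $\Pi$ is a Fourier integral operator whose canonical relation is the diagonal of the symplectic cone $\Sigma_X \subset T^*X\setminus 0$ generated by the contact form, and the operators $\Pi A \Pi$, for $A$ a classical pseudodifferential operator on $X$, form an algebra of generalized Toeplitz operators closed under composition, with principal symbols given by restriction to $\Sigma_X$ and a composition formula that multiplies principal symbols to leading order while producing lower-order corrections through the usual expansion. The multiplication operators $M_{f_1}, M_{f_2}$ have order $0$, so $\Pi M_{f_1}\Pi M_{f_2}\Pi$ is again a generalized Toeplitz operator and admits an expansion $\Pi M_{f_1}\Pi M_{f_2}\Pi \sim \sum_{l\geq 0}\Pi B_l \Pi$ with $B_l$ of order $-l$. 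The crucial translation is that on the weight-$k$ space the cone variable is pinned to homogeneity $k$, so an operator of order $-l$ contributes a factor $k^{-l}$; restricting each $\Pi B_l \Pi$ to weight $k$ and rewriting it as a Toeplitz operator $T^{(k)}_{c^{(l)}_\s(f_1,f_2),\s}$ for suitable $c^{(l)}_\s(f_1,f_2)\in C^\infty(M)$ yields the stated expansion with the asserted uniform remainder.

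The leading term then drops out of the calculus: the principal symbol of $\Pi M_f\Pi$ is the restriction of $f$ to $\Sigma_X$, so multiplying symbols gives $c^{(0)}_\s(f_1,f_2) = f_1 f_2$. For uniqueness of the coefficients I would use the asymptotic injectivity of the Toeplitz map, namely the standard consequence $\lim_{k\to\infty}\|T^{(k)}_{g,\s}\| = \sup_M |g|$: if two expansions $\sum_l T^{(k)}_{c^{(l)}}k^{-l}$ and $\sum_l T^{(k)}_{\tilde c^{(l)}}k^{-l}$ represent the same product, then at the lowest order $l_0$ where they differ one gets $\|T^{(k)}_{c^{(l_0)}-\tilde c^{(l_0)}}\| = O(k^{-1})\to 0$, forcing $\sup_M|c^{(l_0)}-\tilde c^{(l_0)}| = 0$ and hence $c^{(l)} = \tilde c^{(l)}$ for all $l$ by induction.

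The main obstacle is the bookkeeping that converts the homogeneity grading of the Boutet de Monvel--Guillemin calculus into the $1/k$-grading, together with ensuring that all ingredients --- the Szegő parametrix, the symbol composition, and the coefficients $c^{(l)}_\s$ --- depend smoothly on the Kähler parameter $\s$ so that the estimates hold uniformly over compact subsets of $\T$. I would handle this by carrying out the Boutet de Monvel--Sjöstrand parametrix construction with smooth dependence on $\s$ (the Kähler structure $I_\s$ enters only through the symbol of $\Pi$), after which uniformity follows from the uniform control of the symbol expansion on compact parameter sets. The full details are in Schlichenmaier \cite{Sch}.
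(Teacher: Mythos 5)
Your proposal is correct and takes essentially the same route as the paper, which does not prove Theorem \ref{S} itself but attributes it to Schlichenmaier \cite{Sch}, whose argument is precisely the reduction to the Boutet de Monvel--Sj\"{o}strand description of the Szeg\"{o} projector on the unit circle bundle in $\L^{-1}$ and the Boutet de Monvel--Guillemin symbol calculus for generalized Toeplitz operators, with uniqueness of the coefficients following from $\lim_{k\to\infty}\|T^{(k)}_{g,\s}\|=\sup_M|g|$. No substantive discrepancy to report.
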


\begin{remark} \label{rem:1} It will be useful for us to define new
  coefficients ${\tilde c}_\s^{(l)}(f,g) \in C^\infty(M)$ which
  correspond to the expansion of the product in $1/(k+n/2)$ (where $n$
  is some fixed integer):
  \[T_{f_1,\s}^{(k)}T_{f_2,\s}^{(k)} \sim \sum_{l=0}^\infty T_{{\tilde
      c}_\s^{(l)}(f_1,f_2),\s}^{(k)} (k+n/2)^{-l}.\] For future
  reference, we note that the first three coefficients are given by
  $\tilde c^{(0)}_\sigma(f_1,f_2) = c^{(0)}_\sigma(f_1,f_2)$, $\tilde
  c^{(1)}_\sigma(f_1,f_2) = c^{(1)}_\sigma(f_1,f_2)$ and $\tilde
  c^{(2)}_\sigma(f_1,f_2) = c^{(2)}_\sigma(f_1,f_2) + \frac{n}{2}
  c^{(1)}_\sigma(f_1,f_2)$.
\end{remark}

Theorem \ref{S} is proved in \cite{Sch} where it is also proved that
the formal generating series for the $c_\s^{(l)}(f_1,f_2)$'s gives a
formal deformation quantization of symplectic manifold $(M,\omega)$.

We recall the definition of a formal deformation
quantization. Introduce the space of formal functions $C^\infty_h(M) =
C^\infty(M)[[h]]$ as the space of formal power series in the variable
$h$ with coefficients in $C^\infty(M)$. Let $\bC_h = \bC[[h]]$ denote
the formal constants.

\begin{definition}
  A deformation quantization of $(M,\omega)$ is an associative product
  $\star$ on $C^\infty_h(M)$ which respects the $\bC_h$-module
  structure.  For $f,g \in C^\infty(M)$, it is defined as
  \[f \star g = \sum_{l=0}^\infty c^{(l)}(f,g) h^{l},\] through a
  sequence of bilinear operators
$$c^{(l)} : C^\infty(M) \otimes C^\infty(M) \ra C^\infty(M),$$
which musth satisfy
\begin{align*}
  c^{(0)}(f, g) = fg \qquad \text{and} \qquad c^{(1)}(f, g) -
  c^{(1)}(g,f) = -i \{f, g\}.
\end{align*}
The deformation quantization is said to be differential if the
operators $c^{(l)}$ are bidifferential operators. Considering the
symplectic action of $\Gamma$ on $(M,\omega)$, we say that a star
product is $\Gamma$-invariant if
$$ \gamma^*(f \star g) = \gamma^*(f) \star \gamma^*(g)$$
for all $f,g\in C^\infty(M)$ and all $\gamma\in\Gamma$.
\end{definition}

\begin{theorem}[Karabegov \& Schlichenmaier]\label{tKS1}
  The product $\BTstar_\s$ given by
  \[f \BTstar_\s g = \sum_{l=0}^\infty c_\s^{(l)}(f,g) h^{l},\] where
  $f,g \in C^\infty(M)$ and $c_\s^{(l)}(f,g)$ are determined by
  Theorem \ref{S}, is a differentiable deformation quantization of
  $(M,\omega)$.
\end{theorem}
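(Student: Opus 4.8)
The plan is to prove that $\BTstar_\s$ defines a differentiable deformation quantization, i.e. that it is an associative product whose coefficients $c_\s^{(l)}$ are bidifferential operators, with the correct semiclassical limit. The starting point is Schlichenmaier's Theorem \ref{S}, which already supplies the coefficients $c_\s^{(l)}(f_1,f_2)$ as uniquely determined smooth functions via the asymptotic expansion of products of Toeplitz operators, together with the normalization $c_\s^{(0)}(f_1,f_2) = f_1 f_2$. What remains is to upgrade these properties into the formal algebraic statement.

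First I would establish associativity. The key observation is that the Toeplitz operator assignment $f \mapsto T_{f,\s}^{(k)}$ is, for each fixed $k$, \emph{genuinely} associative at the level of composition of operators on $H^0(M_\s,\L^k)$: the triple products $(T_{f_1,\s}^{(k)} T_{f_2,\s}^{(k)}) T_{f_3,\s}^{(k)}$ and $T_{f_1,\s}^{(k)} (T_{f_2,\s}^{(k)} T_{f_3,\s}^{(k)})$ coincide on the nose. Applying the asymptotic expansion of Theorem \ref{S} twice to each side and invoking the uniqueness of the coefficients, one matches the two resulting formal series order by order in $k^{-1}$. Since the substitution $h \mapsto k^{-1}$ is injective on asymptotic expansions in the relevant sense (the coefficients are uniquely determined), this forces $(f_1 \BTstar_\s f_2) \BTstar_\s f_3 = f_1 \BTstar_\s (f_2 \BTstar_\s f_3)$ as formal power series in $h$.

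Next I would check that the product respects the $\bC_h$-module structure, which is immediate by extending $c_\s^{(l)}$ $h$-bilinearly, and that it has the required semiclassical behavior: $c_\s^{(0)}(f,g) = fg$ is given, so it only remains to verify the commutator condition $c_\s^{(1)}(f,g) - c_\s^{(1)}(g,f) = -i\{f,g\}$. This follows from the standard first-order asymptotics of Toeplitz operators established in \cite{BdMG,Sch}, namely that the leading correction to the commutator $[T_{f,\s}^{(k)}, T_{g,\s}^{(k)}]$ reproduces the Poisson bracket with the appropriate factor; concretely one computes the subleading term of $T_{f,\s}^{(k)} T_{g,\s}^{(k)} - T_{g,\s}^{(k)} T_{f,\s}^{(k)}$ and identifies it with $-i T_{\{f,g\},\s}^{(k)} k^{-1}$ up to higher order.

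The remaining and most substantial point is \emph{differentiability}: that each $c_\s^{(l)}$ is a bidifferential operator. This is the main obstacle, since the abstract existence and uniqueness of the coefficients does not by itself guarantee their local differential nature. Here I would rely directly on Schlichenmaier's microlocal analysis of the Szeg\H{o} kernel, following Boutet de Monvel and Guillemin \cite{BdMS,BdMG}: the coefficients $c_\s^{(l)}$ arise from the symbol calculus of the Toeplitz structure, and the stationary-phase expansion of the product produces at each order a finite-order differential expression in $f_1$ and $f_2$ with coefficients built from the Kähler geometry of $(M_\s,\omega)$. Thus the bidifferentiability is inherited from the construction rather than proved afresh, and I would cite \cite{Sch} for the precise verification that the generating series yields a differential star product. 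With associativity, the module property, the normalization, and differentiability all in hand, $\BTstar_\s$ is a differentiable deformation quantization of $(M,\omega)$, as claimed.
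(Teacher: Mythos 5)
The paper does not actually prove this theorem: it is quoted from the literature, with the remark immediately following Theorem \ref{S} that both the expansion and the fact that the $c_\s^{(l)}$ generate a deformation quantization are established in \cite{Sch}, and the differentiability (and identification) of the star product in \cite{KS}. So there is no internal proof to compare against; what you have written is an outline of the standard argument from those references, and as an outline it is essentially correct. Your associativity argument -- exact associativity of operator composition, expand both bracketings via Theorem \ref{S}, and match coefficients -- is the right one; note that the order-by-order matching silently uses the norm asymptotics $\lim_{k\to\infty}\|T^{(k)}_{f,\s}\| = \sup_M|f|$ (equivalently, that an asymptotic series of Toeplitz operators determines its coefficient functions uniquely), which is exactly the content of the uniqueness clause in Theorem \ref{S}, so you are entitled to it. Likewise the commutator condition $c_\s^{(1)}(f,g)-c_\s^{(1)}(g,f)=-i\{f,g\}$ is the Bordemann--Meinrenken--Schlichenmaier commutator estimate, consistent with the paper's later formula $c^{(1)}(f_1,f_2)=-g(\partial f_1,\bar\partial f_2)$.

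The one place where your sketch is thinnest is the one place where the real mathematical weight of the theorem sits: differentiability of the $c_\s^{(l)}$. Saying it is ``inherited from the construction'' understates the issue -- the abstract Boutet de Monvel--Sj{\"o}strand/Guillemin symbol calculus gives control of the expansion but does not by itself hand you bidifferential operators; in \cite{KS} this is obtained by identifying $\BTstar_\s$ with (the opposite of a parameter-shifted) Karabegov star product with separation of variables, whose coefficients are bidifferential by construction. Since you explicitly cite \cite{Sch} and \cite{KS} for this step rather than claiming to prove it, your treatment is no less complete than the paper's own, which defers to the same sources; just be aware that this deferral is carrying the hardest part of the statement.
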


\begin{definition}
  The Berezin-Toeplitz deformation quantization of the compact
  K{\"a}hler manifold $(M_\s,\omega)$ is the product $\BTstar_\s$.
\end{definition}

\begin{remark} Let $\Gamma_\s$ be the $\s$-stabilizer subgroup of
  $\Gamma$.  For any element $\gamma\in \Gamma_\s$, we have that
  \[\gamma^*(T^{(k)}_{f,\s}) = T^{(k)}_{\gamma^*f,\s}.\]
  This implies the invariance of $\BTstar_\s$ under the
  $\s$-stabilizer $\Gamma_\s$.
\end{remark}

\begin{remark} Using the coefficients from Remark \ref{rem:1}, we
  define a new star product by
  \[f\tBTstar_\s g = \sum_{l=0}^\infty {\tilde c}_\s^{(l)}(f,g)
  h^{l}.\] Then
  \[f\tBTstar_\s g = \left( (f\circ \phi^{-1})\BTstar_\s (g\circ
    \phi^{-1})\right) \circ \phi\] for all $f,g\in C_h^\infty(M)$,
  where $\phi(h) = \frac{2h}{2 + nh}$.
\end{remark}

\section{The formal Hitchin connection}\label{fgHc}

In this section, we study the the formal Hitchin. We assume the
conditions on $(M,\omega)$ and $I$ of Theorem \ref{HCE}, thus
providing us with a Hitchin connection $\Nabla$ in $H^{(k)}$ over $\T$
and the associated connection $\Nablae$ in $\End(H^{(k)})$.

Recall from the introduction the definition of a formal connection in
the trivial bundle of formal functions. Theorem \ref{MainFGHCI2},
establishes the existence of a unique formal Hitchin connection,
expressing asymptotically the interplay between the Hitchin connection
and the Toeplitz operators.

We want to give an explicit formula for the formal Hitchin connection
in terms of the star product $\tilde \star^{BT}$. We recall that in
the proof of Theorem \ref{MainFGHCI2}, given in \cite{A5}, it is shown
that the formal Hitchin connection is given by
\begin{equation}
  \tD(V)(f) =  - V[F]f + V[F]\tBTstar f + h (E(V)(f) -
  H(V) \tBTstar f), \label{formalcon}
\end{equation}
where $E$ is the one-form on $\T$ with values in $\D(M)$ such that
\begin{align}
  \label{eq:2}
  T^{(k)}_{E(V)f} = \pi^{(k)} o(V)^* f \pi^{(k)} + \pi^{(k)} f o(V)
  \pi^{(k)},
\end{align}
and $H$ is the one form on $\T$ with values in $C^\infty(M)$ such that
$H(V) = E(V)(1)$. Thus, we must find an explicit expression for the
operator $E(V)$.

The following lemmas will prove helpful.
\begin{lemma}
  \label{lem:2}
  The adjoint of $\Delta_B$ is given by
  \begin{align*}
    \Delta^*_B = \Delta_{\bar B},
  \end{align*}
  for any (complex) symmetric bivector field $B \in C^\infty(M,
  S^2(TM_\bC))$.
\end{lemma}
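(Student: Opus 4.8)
The plan is to compute the formal adjoint of $\Delta_B$ with respect to the $L_2$-pairing $\langle\cdot,\cdot\rangle$ directly from the concise expression $\Delta_B = \nabla^2_B + \nabla_{\delta(B)}$, using the adjoint formula \eqref{eq:7}, namely $(\nabla_X)^* = -\nabla_{\bar X} - \delta(\bar X)$, as the only integration-by-parts input. Since both $\Delta_B^*$ and $\Delta_{\bar B}$ are differential operators depending $\bC$-linearly on $B$, and since a smooth complex symmetric bivector field is, locally, a finite sum of squares $Z\otimes Z$ of smooth complex vector fields, it suffices to establish the identity for $B = Z\otimes Z$; a partition-of-unity argument then recovers the general global statement.

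First I would record the elementary identity $\Delta_{Z\otimes Z} = \nabla_Z\nabla_Z + (\delta Z)\,\nabla_Z$. This follows from $\nabla^2_{Z,Z} = \nabla_Z\nabla_Z - \nabla_{\nabla_Z Z}$ together with the computation $\delta(Z\otimes Z) = (\delta Z)\,Z + \nabla_Z Z$, obtained from $\delta(B) = \tr\nabla B$, so that the two $\nabla_{\nabla_Z Z}$ contributions cancel and only the stated second- and first-order terms survive.

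The core of the proof is then a careful application of \eqref{eq:7}. Writing $A = \nabla_{\bar Z}$ and letting $d$ denote multiplication by $\delta(\bar Z)$, so that $(\nabla_Z)^* = -(A+d)$, I would compute $(\nabla_Z\nabla_Z)^* = (A+d)^2 = \nabla_{\bar Z}\nabla_{\bar Z} + 2(\delta\bar Z)\nabla_{\bar Z} + \bar Z[\delta\bar Z] + (\delta\bar Z)^2$, commuting the function past $A$ via $Ad = dA + \bar Z[\delta\bar Z]$. Similarly, since the adjoint of multiplication by $\delta Z$ is multiplication by $\delta\bar Z$, one gets $((\delta Z)\nabla_Z)^* = (\nabla_Z)^*(\delta\bar Z\,\cdot) = -(\delta\bar Z)\nabla_{\bar Z} - \bar Z[\delta\bar Z] - (\delta\bar Z)^2$. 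Adding the two contributions, the two occurrences of $\bar Z[\delta\bar Z]$ cancel, the two of $(\delta\bar Z)^2$ cancel, and the first-order terms combine to $(\delta\bar Z)\nabla_{\bar Z}$, leaving exactly $\Delta_{Z\otimes Z}^* = \nabla_{\bar Z}\nabla_{\bar Z} + (\delta\bar Z)\nabla_{\bar Z} = \Delta_{\overline{Z\otimes Z}}$.

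The step I expect to require the most care is precisely this cancellation of the lower-order terms: it is a priori not obvious that the first-order corrections produced by taking the adjoint of the second-order part $\nabla^2_B$, together with the adjoint of the divergence term $\nabla_{\delta(B)}$, conspire to leave no residual zeroth- or first-order operator beyond what is already encoded in $\Delta_{\bar B}$. The reduction to the single-square case, and the verification that the decomposition $B=\sum_a Z_a\otimes Z_a$ can be carried out with smooth complex vector fields so that $\bC$-linearity in $B$ applies, is the routine but necessary bookkeeping that makes the argument global.
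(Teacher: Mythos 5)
Your proof is correct and follows essentially the same route as the paper's: decompose $B$ into rank-one terms, use the adjoint formula \eqref{eq:7} for $(\nabla_X)^*$, and observe that the zeroth- and first-order terms cancel, leaving $\Delta_{\bar B}$. The only cosmetic difference is that you reduce to squares $Z\otimes Z$ by polarization, whereas the paper works directly with a general decomposition $B=\sum_r X_r\otimes Y_r$ and invokes the symmetry of $B$ at the last step; the computation is otherwise identical.
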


\begin{proof}
  First, we write $B = \sum_r^R X_r \otimes Y_r$. Then
  \begin{align*}
    \Delta_B = \sum_r^R \nabla_{X_r} \nabla_{Y_r} +
    \nabla_{\delta(X_r) Y_r}.
  \end{align*}
  Now, using \eqref{eq:7}, we get that
  \begin{align*}
    (\nabla_{X_r} \nabla_{Y_r})^* &= (\nabla_{Y_r})^* (\nabla_{X_r})^*
    = (\nabla_{\bar Y_r} + \delta(\bar Y_r) (\nabla_{\bar X_r} +
    \delta(\bar X_r)) \\ &= \nabla_{\bar Y_r} \nabla_{\bar X_r} +
    \nabla_{\bar Y_r} \delta(X_r) + \delta(\bar Y_r) \nabla_{\bar X_r}
    + \delta(\bar Y_r)\delta(\bar X_r),
  \end{align*}
  and
  \begin{align*}
    (\nabla_{\delta(X_r) Y_r})^* &= - \nabla_{\delta(\bar X) \bar Y_r}
    - \delta(\delta(\bar X_r) \bar Y_r) \\ &= - \delta(\bar X_r)
    \nabla_{\bar Y_r} - \bar Y_r[\delta(\bar X_r)] - \delta(\bar
    X_r)\delta(\bar Y_r) \\ &= - \nabla_{\bar Y_r} \delta(\bar X_r) -
    \delta(\bar X_r)\delta(\bar Y_r),
  \end{align*}
  so we conclude that
  \begin{align*}
    \Delta_B^* = \sum_r^R \nabla_{\bar Y_r} \nabla_{\bar X_r} +
    \delta(\bar Y_r) \nabla_{\bar X_r} = \Delta_{\bar B},
  \end{align*}
  since $B$ is symmetric.
\end{proof}

\begin{lemma}
  \label{lem:1}
  The operator $\Delta_B$ satisfies
  \begin{align*}
    \pi^{(k)} \Delta_B s = 0,
  \end{align*}
  for any section $s \in C^\infty(M, \mathcal{L}^k)$ and any symmetric
  bivector field $B$.
\end{lemma}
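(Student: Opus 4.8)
The plan is to prove the vanishing by duality. Since $\pi^{(k)}$ is the orthogonal projection of $\cH^{(k)}_\s$ onto $H^0(M_\s,\L^k)$, the assertion $\pi^{(k)}\Delta_B s = 0$ is equivalent to saying that $\Delta_B s$ is orthogonal to every holomorphic section, i.e. that $\langle \Delta_B s, t\rangle = 0$ for all $t\in H^0(M_\s,\L^k)$. Throughout I take $B$ to be valued in $S^2(T_\s)$, the holomorphic tangent bundle; this is the case in which the lemma is applied (to $B = G(V)$), and, as I note below, it is precisely the hypothesis that makes the statement work.

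First I would move $\Delta_B$ across the Hermitian pairing using the adjoint computed in Lemma \ref{lem:2}, so that $\langle \Delta_B s, t\rangle = \langle s, \Delta_B^* t\rangle = \langle s, \Delta_{\bar B} t\rangle$. It therefore suffices to show that $\Delta_{\bar B}$ annihilates holomorphic sections. Writing $B = \sum_r X_r\otimes Y_r$ with $X_r, Y_r\in C^\infty(M, T_\s)$, the conjugate bivector field is $\bar B = \sum_r \bar X_r\otimes \bar Y_r$ with $\bar X_r, \bar Y_r\in C^\infty(M,\bT_\s)$ anti-holomorphic, and by the formula recorded in the proof of Lemma \ref{lem:2} one has $\Delta_{\bar B} = \sum_r \nabla_{\bar X_r}\nabla_{\bar Y_r} + \nabla_{\delta(\bar X_r)\bar Y_r}$.

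The key step is the observation that $\nabla_{\bar X} t = 0$ for any anti-holomorphic vector field $\bar X\in C^\infty(M,\bT_\s)$ and any $t\in H^0(M_\s,\L^k)$. Indeed, by definition $\nabla^{0,1}_\s = \frac12(1+iI_\s)\nabla$ annihilates holomorphic sections, and the contraction of $\nabla t$ with a vector in $\bT_\s$ is exactly the contraction of its $(0,1)$-part $\nabla^{0,1}_\s t = 0$. Applying this to each term of $\Delta_{\bar B}$: the summand $\nabla_{\delta(\bar X_r)\bar Y_r} t = \delta(\bar X_r)\nabla_{\bar Y_r} t$ vanishes, and $\nabla_{\bar X_r}\nabla_{\bar Y_r} t = \nabla_{\bar X_r}(0) = 0$. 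Hence $\Delta_{\bar B} t = 0$, the pairing vanishes, and the lemma follows.

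I do not expect a serious obstacle here; the content is really just the interplay of the adjoint formula with holomorphicity. The one point requiring care — and the genuine hypothesis being used — is that $B$ must take values in $S^2(T_\s)$, so that $\bar B$ has both legs in $\bT_\s$; for a general complex symmetric bivector field the argument breaks down. As an independent cross-check one can compute directly, without duality: expanding $\pi^{(k)}\nabla_{X_r}\nabla_{Y_r}$ and $\pi^{(k)}\nabla_{\delta(X_r)Y_r}$ via \eqref{2to0order} and \eqref{1to0order} yields the Toeplitz symbols $\delta(Y_r)\delta(X_r) + Y_r[\delta(X_r)]$ and $-\bigl(\delta(X_r)\delta(Y_r) + Y_r[\delta(X_r)]\bigr)$, which cancel term by term.
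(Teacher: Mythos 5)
Your proposal is correct, but your primary argument runs along a genuinely different route from the paper's. The paper proves the lemma by direct computation: writing $B=\sum_r X_r\otimes Y_r$, it applies \eqref{2to0order} to get $\pi^{(k)}\nabla_{X_r}\nabla_{Y_r}s=\pi^{(k)}(\delta(X_r)\delta(Y_r)+Y_r[\delta(X_r)])s$ and \eqref{1to0order} to get $\pi^{(k)}\nabla_{\delta(X_r)Y_r}s=-\pi^{(k)}(\delta(X_r)\delta(Y_r)+Y_r[\delta(X_r)])s$, and the two symbols cancel term by term --- this is exactly the ``independent cross-check'' you append at the end, so you have in fact reproduced the paper's proof as a secondary verification. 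Your main argument instead dualizes: $\pi^{(k)}\Delta_B s=0$ iff $\langle \Delta_B s,t\rangle=\langle s,\Delta_{\bar B}t\rangle=0$ for all holomorphic $t$, by Lemma \ref{lem:2}, and $\Delta_{\bar B}$ annihilates $H^0(M_\s,\L^k)$ because both of its legs lie in $\bT_\s$ and $\nabla_{\bar X}t$ is a contraction of $\nabla^{0,1}_\s t=0$. Both arguments are sound and of comparable length; note that \eqref{1to0order} and \eqref{2to0order} are themselves obtained by integrating a Lie derivative against a holomorphic section, so the two routes are close cousins, with the integration by parts packaged either into the Toeplitz identities (paper) or into the adjoint formula of Lemma \ref{lem:2} (yours). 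Your version has the merit of making visible exactly where the hypothesis enters: the statement's ``any symmetric bivector field'' must be read as $B\in C^\infty(M,S^2(T_\s))$ --- the case needed for $B=G(V)$ and the only one for which either proof works, since the paper's invocation of \eqref{2to0order} likewise requires $X_r,Y_r\in C^\infty(M,T_\s)$ --- and you are right that for a general complex symmetric bivector field the conclusion fails.
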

\begin{proof}
  Again, we write $B = \sum^R_r X_r \otimes Y_r$ and recall from
  \eqref{2to0order} that
  \begin{align*}
    \pi^{(k)} \nabla_{X_r} \nabla_{Y_r} s = \pi^{(k)} (
    \delta(X_r)\delta(Y_r) + Y_r[\delta(X_r)]) s.
  \end{align*}
  On the other hand, we have that
  \begin{align*}
    \pi^{(k)} \nabla_{\delta(X_r) Y_r} s = - \pi^{(k)}\delta
    (\delta(X_r) Y_r) s = - \pi^{(k)}(\delta(X_r)\delta(Y_r) +
    Y_r[\delta(X_r)]) s,
  \end{align*}
  and it follows immediately that
  \begin{align*}
    \pi^{(k)} \Delta_B s = \pi^{(k)} \sum_r^R (\nabla_{X_r}
    \nabla_{Y_r} + \nabla_{\delta(X_r) Y_r}) s = 0,
  \end{align*}
  which proves the lemma.
\end{proof}

Finally, it will prove useful to observe that
\begin{align}
  \label{eq:4}
  \delta( B df) = \Delta_B (f),
\end{align}
for any function $f$ and any bivector field $B$.

Now, the adjoint of $o(V)$ is given by
\begin{align*}
  o(V)^* &= - \frac{1}{4}( \Delta_{\bar G(V)} - 2\nabla_{\bar G(V) dF}
  - 2 \Delta_{\bar G(V)}(F) - 2n V''[F]),
\end{align*}
where we used \eqref{eq:4}. Furthermore, we observe that $o(V)^*$
differentiates in anti-holomorphic directions only, which implies that
\begin{align*}
  \pi^{(k)} o(V)^* f \pi^{(k)} &= \pi^{(k)} o(V)^*(f) \pi^{(k)} \\ &=
  - \frac{1}{4} \pi^{(k)} (\Delta_{\bar G(V)}(f) - 2\nabla_{\bar G(V)
    dF}(f) - 2\Delta_{\bar G(V)}(F)f - 2n V''[F]f) \pi^{(k)}.
\end{align*}
This gives an explicit formula for the first term of \eqref{eq:2}.

To determine the second term of \eqref{eq:2}, we observe that
\begin{align*}
  \Delta_{G(V)} f s = f \Delta_{G(V)} s + \Delta_{G(V)}(f) s + 2
  \nabla_{G(V)df} s.
\end{align*}
Projecting both sides onto the holomorphic sections and applying Lemma
\ref{lem:1} and the formula \eqref{eq:4}, we get that
\begin{align*}
  \pi^{(k)} f \Delta_{G(V)} &= - \pi^{(k)} (\Delta_{G(V)}(f) +
  2\nabla_{G(V)df}) = \pi^{(k)} \Delta_{G(V)}(f).
\end{align*}
Furtermore, observe that
\begin{align*}
  \pi^{(k)} f \nabla_{G(V) dF} &= \pi^{(k)} (\nabla_{G(V) dF} f -
  \nabla_{G(V) dF}(f) ) \\ &= - \pi^{(k)} (\nabla_{G(V)dF}(f) +
  \Delta_{G(V)}(F)f),
\end{align*}
where we once again used \eqref{eq:4} for the last equality. Thus, we
get that
\begin{align*}
  \pi^{(k)} f o(V) \pi^{(k)} &= -\frac{1}{4} \pi^{(k)} ( \Delta_{G(V)}
  (f) - 2\nabla_{G(V)dF}(f) - 2\Delta_{G(V)}(F) f - 2nV'[F] f )
  \pi^{(k)},
\end{align*}
which gives an explicit formula for the second term of \eqref{eq:2}.
Finally, we can conclude that
\begin{align*}
  E(V)(f) = - \frac{1}{4} (\Delta_{\tilde G(V)}(f) - 2\nabla_{\tilde
    G(V)dF}(f) - 2\Delta_{\tilde G(V)}(F) f - 2n V[F] f),
\end{align*}
satisfies \eqref{eq:2} and hence \eqref{formalcon}. Also, we note that
\begin{align*}
  H(V) = E(V)(1) = \frac{1}{2} (\Delta_{\tilde G(V)}(F) + n V[F]).
\end{align*}
Summarizing the above, we have proved the following
\begin{theorem}
  \label{thm:1}
  The formal Hitchin connection is given by
  \begin{align*}
    D_V f &= V[f] - \frac{1}{4}h \Delta_{\tilde G(V)}(f) +
    \frac{1}{2}h \nabla_{\tilde G(V)dF}(f) + V[F] \tBTstar f - V[F] f
    \\ & \quad - \frac{1}{2} h(\Delta_{\tilde G(V)}(F) \tBTstar f +
    nV[F] \tBTstar f - \Delta_{\tilde G(V)}(F) f - n V[F]f)
  \end{align*}
  for any vector field $V$ and any section $f$ of $C_h$.
\end{theorem}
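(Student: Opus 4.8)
The plan is to start from the expression \eqref{formalcon} for the formal Hitchin connection recalled from \cite{A5}, which writes $\tD(V)$ in terms of two auxiliary objects: the one-form $E$ with values in $\D(M)$ defined by \eqref{eq:2}, and the one-form $H$ with values in $C^\infty(M)$ given by $H(V) = E(V)(1)$. Since $D_V f = V[f] + \tD(V)(f)$, once $E(V)$ and $H(V)$ have been made explicit the claimed formula follows by direct substitution. Thus the entire problem reduces to evaluating the right-hand side of \eqref{eq:2} explicitly as a Toeplitz operator and reading off the function $E(V)(f)$.

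First I would compute the adjoint $o(V)^*$ of the operator $o(V)$ from \eqref{eqo}. Using Lemma \ref{lem:2}, which gives $\Delta_B^* = \Delta_{\bar B}$, together with the adjoint formula \eqref{eq:7} for $\nabla_X$ and the identity \eqref{eq:4}, namely $\delta(B df) = \Delta_B(f)$, one obtains an expression for $o(V)^*$ that differentiates only in anti-holomorphic directions. This last property is the crucial simplification: when $o(V)^*$ is sandwiched between projections as in the first term $\pi^{(k)} o(V)^* f \pi^{(k)}$, it acts on the holomorphic section to its right only through the factor $f$, so that $\pi^{(k)} o(V)^* f \pi^{(k)} = \pi^{(k)} o(V)^*(f) \pi^{(k)}$, the Toeplitz operator of the function $o(V)^*(f)$.

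Next I would treat the second term $\pi^{(k)} f o(V) \pi^{(k)}$, where the operator acts first. Here I would use the Leibniz expansions of $\Delta_{G(V)}(fs)$ and of $\nabla_{G(V)dF}(fs)$ and then project onto holomorphic sections. Lemma \ref{lem:1}, stating $\pi^{(k)} \Delta_B s = 0$, kills the leading term, and the residual first-order operators applied to holomorphic sections are converted into zero-order Toeplitz operators via \eqref{1to0order} together with \eqref{eq:4}. Adding the two contributions and using $\tG(V) = G(V) + \overline{G(V)}$ to combine the holomorphic and anti-holomorphic pieces yields the explicit function $E(V)(f)$; setting $f = 1$, so that all derivatives of $f$ drop out, then gives $H(V)$. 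Substituting these into \eqref{formalcon} and adding $V[f]$ produces the stated formula.

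The main obstacle is bookkeeping rather than anything conceptual: because $E(V)$ is only defined implicitly through its action under $\pi^{(k)}(\cdot)\pi^{(k)}$ in \eqref{eq:2}, one must verify at each stage that the sandwiched differential operators genuinely reduce to multiplication operators, and carefully track which terms carry derivatives of $f$, of the Ricci potential $F$, or of both, so that the holomorphic and anti-holomorphic contributions assemble into the symmetric bivector field $\tG(V)$ with the correct coefficients. Keeping the factors of $n$ and the signs consistent through the adjoint and the two projections is where errors would most likely creep in.
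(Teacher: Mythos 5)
Your proposal follows the paper's own proof essentially step for step: starting from \eqref{formalcon}, computing $o(V)^*$ via Lemma \ref{lem:2}, \eqref{eq:7} and \eqref{eq:4}, exploiting that $o(V)^*$ differentiates only anti-holomorphically to reduce $\pi^{(k)} o(V)^* f \pi^{(k)}$ to a Toeplitz operator, handling $\pi^{(k)} f o(V) \pi^{(k)}$ by the Leibniz rule together with Lemma \ref{lem:1} and \eqref{1to0order}, and then assembling $E(V)(f)$, $H(V)=E(V)(1)$ and substituting back. This is precisely the argument given in Section \ref{fgHc}, so the approach is correct and not materially different from the paper's.
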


The next lemma is also proved in \cite{A5}, and it follows basically
from the fact that
\[\Nablae_V (T^{(k)}_{f}T^{(k)}_{g}) =
\Nablae_V (T^{(k)}_{f})T^{(k)}_{g} + T^{(k)}_{f}\Nablae_V
(T^{(k)}_{g}). \] We have

\begin{lemma}\label{Deriv}
  The formal operator $D_V$ is a derivation for $\tBTstar_\sigma$ for
  each $\s\in\T$, i.e.
  \[D_V(f\tBTstar g) = D_V(f)\tBTstar g + f\tBTstar D_V(g)\] for all
  $f,g\in C^\infty(M)$.
\end{lemma}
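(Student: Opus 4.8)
The plan is to deduce the formal derivation property directly from the genuine Leibniz rule for the induced connection $\Nablae$ on $\End(H^{(k)})$, combined with the two asymptotic dictionaries already at our disposal: Theorem \ref{MainFGHCI2}, which translates $\Nablae_V$ into the formal Hitchin connection $D_V$, and the Karabegov--Schlichenmaier expansion (Theorem \ref{S}, in the $1/(k+n/2)$-normalization of Remark \ref{rem:1}), which translates composition of Toeplitz operators into the star product $\tBTstar$. First I would record that, for any two sections $A,B$ of $\End(H^{(k)})$, the endomorphism connection satisfies the exact Leibniz rule $\Nablae_V(AB) = (\Nablae_V A)B + A(\Nablae_V B)$; applied to $A = T^{(k)}_f$ and $B = T^{(k)}_g$ for $f,g\in C^\infty(M)$ this gives
\[\Nablae_V\bigl(T^{(k)}_f T^{(k)}_g\bigr) = \bigl(\Nablae_V T^{(k)}_f\bigr) T^{(k)}_g + T^{(k)}_f \bigl(\Nablae_V T^{(k)}_g\bigr).\]

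Then I would compute the asymptotic expansion of this identity in two ways. On the right-hand side, Theorem \ref{MainFGHCI2} gives $\Nablae_V T^{(k)}_f \sim T^{(k)}_{(D_Vf)(1/(k+n/2))}$ and likewise for $g$, and the product expansion converts each composition into a single Toeplitz operator, yielding
\[\Nablae_V\bigl(T^{(k)}_f T^{(k)}_g\bigr) \sim T^{(k)}_{\bigl((D_Vf)\tBTstar g + f\tBTstar(D_Vg)\bigr)(1/(k+n/2))}.\]
On the left-hand side, I would first expand the product, $T^{(k)}_f T^{(k)}_g \sim T^{(k)}_{(f\tBTstar g)(1/(k+n/2))}$, and then apply $\Nablae_V$, using Theorem \ref{MainFGHCI2} for the (now $\s$-dependent) formal symbol $f\tBTstar g$, to obtain
\[\Nablae_V\bigl(T^{(k)}_f T^{(k)}_g\bigr) \sim T^{(k)}_{\bigl(D_V(f\tBTstar g)\bigr)(1/(k+n/2))}.\]
Finally, the uniqueness of the Toeplitz symbols in an asymptotic expansion (already asserted in Theorem \ref{S}) lets me equate the two formal symbols order by order in $h$, giving $D_V(f\tBTstar g) = (D_Vf)\tBTstar g + f\tBTstar(D_Vg)$, which is exactly the claim.

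The step that requires genuine care — and where I would lean on the analysis of \cite{A5} — is the legitimacy of composing these asymptotic expansions: the families $\Nablae_V T^{(k)}_f$ and $T^{(k)}_f T^{(k)}_g$ are only asymptotically, not exactly, Toeplitz operators, so I must know that the product expansion and the connection expansion of Theorem \ref{MainFGHCI2} continue to hold, with the expected formal symbols, when one factor is itself an asymptotic series of Toeplitz operators with $\s$-dependent formal symbol, and that the resulting error terms remain $O(k^{-(L+1)})$ uniformly on compact subsets of $\T$. Granting this — it is precisely the uniform, composition-compatible formulation of the expansions used throughout \cite{A5} — the two displayed expansions are legitimate, and uniqueness of Toeplitz symbols closes the argument.
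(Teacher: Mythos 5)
Your proposal is correct and follows essentially the same route as the paper, which states that the lemma ``follows basically from'' the exact Leibniz rule $\Nablae_V (T^{(k)}_{f}T^{(k)}_{g}) = \Nablae_V (T^{(k)}_{f})T^{(k)}_{g} + T^{(k)}_{f}\Nablae_V(T^{(k)}_{g})$, combined with the asymptotic dictionaries of Theorem \ref{MainFGHCI2} and Theorem \ref{S}. Your elaboration, including the flagged care point about composing asymptotic expansions with $\sigma$-dependent formal symbols and the uniqueness of Toeplitz symbols, is exactly the content the paper defers to \cite{A5}.
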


If the Hitchin connection is projectively flat, then the induced
connection in the endomorphism bundle is flat and hence so is the
formal Hitchin connection by Proposition 3 of \cite{A5}.

Recall from Definition \ref{formaltrivi2} in the introduction the
definition of a formal trivialization. As mentioned there, such a
formal trivialization will not exist even locally on $\T$, if $D$ is
not flat. However, if $D$ is flat, then we have the following result.

\begin{proposition}\label{Dfc}
  Assume that $D$ is flat and that $\tD = 0$ mod $h$. Then locally
  around any point in $\T$, there exists a formal trivialization. If
  $H^1(\T,\bR) = 0$, then there exists a formal trivialization defined
  globally on $\T$. If further $H^1_\Gamma(\T,D(M)) = 0$, then we can
  construct $P$ such that it is $\Gamma$-equivariant.
\end{proposition}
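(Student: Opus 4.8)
The plan is to reformulate the defining equation of a formal trivialization as an operator-valued gauge equation and then to solve it inductively in the formal parameter $h$. First I would unwind the condition $D_V(P(f)) = 0$. Writing $P$ as a smooth map $\T \ra \D_h(M)$ and using that $f \in C^\infty_h(M)$ is $\sigma$-independent, the condition $D_V(P(f)) = V[P(f)] + \tD(V)(P(f)) = 0$ for all $f$ is equivalent to the single operator equation
\[ V[P] + \tD(V)\circ P = 0 \quad\text{for all vector fields } V. \]
Equivalently $D_V \circ P = P \circ \Nablat_V$, so that $P$ is precisely a gauge transformation carrying the trivial connection $\Nablat$ to $D$; the normalization $\tD = 0 \bmod h$ makes $P = \Id \bmod h$ consistent at zeroth order.

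Next I would expand $P = \Id + \sum_{m\ge1} P^{(m)} h^m$ with $P^{(m)} : \T \ra \D(M)$, and $\tD(V) = \sum_{l\ge1}\tD^{(l)}(V) h^l$. The order-$h^N$ part of the operator equation reads
\[ dP^{(N)} = -R^{(N)}, \qquad R^{(N)}(V) = \sum_{l=1}^{N}\tD^{(l)}(V)\,P^{(N-l)}, \]
a single equation of $\D(M)$-valued one-forms in which $R^{(N)}$ depends only on the already-constructed $P^{(0)},\dots,P^{(N-1)}$. Solving this order by order is the heart of the construction.

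The main step, and the main obstacle, is to show that $R^{(N)}$ is closed, so that $dP^{(N)} = -R^{(N)}$ can be integrated. Here I would invoke flatness of $D$: writing the curvature as $F^D = d\tD + [\tD \wedge \tD]$, with order-$h^N$ component $d\tD^{(N)} + \sum_{l}[\tD^{(l)}\wedge \tD^{(N-l)}]$, the vanishing $F^D = 0$ is exactly the Maurer--Cartan condition for $-\tD$, which is the integrability condition for the system $V[P] = -\tD(V)P$. Concretely I would prove by induction on $N$ that $dR^{(N)} = 0$ by differentiating the definition of $R^{(N)}$, substituting the lower-order equations $dP^{(j)} = -R^{(j)}$ for $j<N$, and collecting terms using the order-$h^N$ piece of $F^D = 0$; the bracket terms arising from $d(\tD^{(l)}\circ P^{(N-l)})$ reorganize, via the curvature identity, into a total derivative that vanishes. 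This is the one genuinely computational lemma, and keeping track of indices and of the noncommutativity of the $\tD^{(l)}$ and $P^{(m)}$ is where the care is needed.

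Granting closedness, the three assertions follow formally. Locally the Poincar\'e lemma for $\D(M)$-valued forms provides $P^{(N)}$, completing the local induction. Globally, since $\D(M)$ enters as a fixed coefficient space, $H^1(\T,\D(M)) \cong H^1(\T,\bR)\otimes \D(M)$, so the hypothesis $H^1(\T,\bR)=0$ makes every closed $R^{(N)}$ exact and yields a globally defined $P^{(N)}$. For equivariance I would run the same induction inside the $\Gamma$-equivariant de Rham complex: $\Gamma$-invariance of $D$, hence of $\tD$, together with the inductive hypothesis that $P^{(0)},\dots,P^{(N-1)}$ are equivariant, makes $R^{(N)}$ an equivariant, equivariantly closed one-form, so the hypothesis $H^1_\Gamma(\T,D(M)) = 0$ furnishes an equivariant primitive $P^{(N)}$ satisfying $P(\phi(\sigma)) = \phi^* P(\sigma)$. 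Induction on $N$ then produces the desired $P$ in each of the three cases.
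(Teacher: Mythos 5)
Your proposal is correct and takes essentially the same approach as the proof the paper relies on (deferred to \cite{A5}): expand $P$ order by order in $h$, use flatness of $D$ to show that the $\D(M)$-valued one-form determined by the lower-order terms is closed, and integrate it locally by the Poincar\'e lemma, globally using $H^1(\T,\bR)=0$, and equivariantly using $H^1_\Gamma(\T,\D(M))=0$. The only cosmetic point is that you do not need the literal isomorphism $H^1(\T,\D(M))\cong H^1(\T,\bR)\otimes\D(M)$, which is delicate for infinite-dimensional coefficients; it suffices that the standard integration argument for closed one-forms applies verbatim to forms with values in the fixed vector space $\D(M)$.
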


In this proposition, $H^1_\Gamma(\T,D(M))$ simply refers to the
$\Gamma$-equivariant first \mbox{de Rham} cohomology of $\T$ with
coefficients in the real $\Gamma$-vector space $D(M)$.

Now suppose we have a formal trivialization $P$ of the formal Hitchin
connection $D$.  We can then define a new smooth family of star
products, parametrized by $\T$, by
\[f\star_\s g = P_\s^{-1}(P_\s(f) \tBTstar_\s P_\s(g))\] for all
$f,g\in C^\infty(M)$ and all $\s\in \T$. Using the fact that $P$ is a
trivialization, it is not hard to prove.

\begin{proposition}\label{ftrdq}
  The star products $\star_\s$ are independent of $\s\in\T$.
\end{proposition}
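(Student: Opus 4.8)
The plan is to show that the $\s$-derivative of $f\star_\s g$ in every direction $V$ on $\T$ vanishes, by exploiting that the formal trivialization $P$ conjugates the formal Hitchin connection $D$ to the trivial connection, together with the fact (Lemma \ref{Deriv}) that $D_V$ is a derivation of $\tBTstar_\s$. The one point demanding care is that $f\star_\s g$ genuinely varies with $\s$, so one cannot directly invoke the defining relation $D_V(P(k))=0$ of Definition \ref{formaltrivi2}, which holds only for a \emph{fixed} $k\in C^\infty_h(M)$.

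First I would extract the intertwining relation implicit in a formal trivialization. Writing $P$ for the operator-valued map $\s\mapsto P_\s$, the condition $D_V(P(k))=0$ for every fixed $k$ is, since $V[k]=0$, equivalent to the operator identity $V[P]+\tD(V)\circ P=0$ on $C^\infty_h(M)$. Applying the Leibniz rule $V[P(u)]=(V[P])(u)+P(V[u])$ to an arbitrary ($\s$-dependent) section $u$ of $\C_h$ then gives
\[D_V(P(u)) = (V[P])(u)+P(V[u])+\tD(V)(P(u)) = P(V[u]),\]
so that $P$ intertwines the trivial connection $u\mapsto V[u]$ with $D$.

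Next I would promote Lemma \ref{Deriv} from constant functions to sections. Evaluated on constant $a,b$, that lemma is exactly a bidifferential-operator identity (at each $\s$) relating $\tD(V)$, $\tBTstar_\s$ and the $\s$-derivative of $\tBTstar_\s$; since it involves no derivatives of $a,b$ in the $\T$-directions, it persists as an identity of bidifferential operators and hence holds for all sections $a,b$. Combined with the Leibniz rule for $V[\cdot]$, this yields $D_V(a\tBTstar b)=D_V(a)\tBTstar b+a\tBTstar D_V(b)$ for all sections. Finally I would assemble: applying this to the sections $P(f),P(g)$, which are covariant constant because $f,g$ are fixed, gives $D_V\bigl(P(f)\tBTstar P(g)\bigr)=0$; but by definition $P_\s(f\star_\s g)=P_\s(f)\tBTstar_\s P_\s(g)$, so with $u_\s=f\star_\s g$ the intertwining relation forces $P(V[u])=D_V(P(u))=0$. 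Since $P_\s=\Id \bmod h$ is invertible as a formal operator, $V[u]=0$, and as $V$ and the base point are arbitrary, $f\star_\s g$ is independent of $\s$. The genuine obstacle is purely the bookkeeping of these first two steps; once the intertwining relation and the section-level derivation property are established, the conclusion is immediate.
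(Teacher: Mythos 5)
Your proof is correct and follows exactly the route the paper intends: the paper omits the argument, remarking only that "using the fact that $P$ is a trivialization, it is not hard to prove," but the two ingredients you use --- the derivation property of $D_V$ for $\tBTstar_\s$ (Lemma \ref{Deriv}) and the trivialization property of $P$ --- are precisely the ones the paper sets up immediately beforehand. Your care in distinguishing the defining relation $D_V(P(k))=0$ for fixed $k$ from the intertwining identity $D_V(P(u))=P(V[u])$ for $\s$-dependent sections, and in promoting Lemma \ref{Deriv} from fixed functions to sections via its bidifferential-operator character, supplies exactly the bookkeeping the paper leaves to the reader.
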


Then, we have the following which is proved in \cite{A5}.

\begin{theorem}[Andersen]\label{general}
  Assume that the formal Hitchin connection $D$ is flat and
$$H^1_\Gamma(\T,D(M)) = 0,$$ then there is a $\Gamma$-invariant
trivialization $P$ of $D$ and the star product
\[f\star g = P_\s^{-1}(P_\s(f) \tBTstar_\s P_\s(g))\] is independent
of $\s\in \T$ and $\Gamma$-invariant. If $H^1_\Gamma(\T,C^\infty(M)) =
0$ and the commutant of $\Gamma$ in $D(M)$ is trivial, then a
$\Gamma$-invariant differential star product on $M$ is unique.
\end{theorem}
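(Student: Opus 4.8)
The plan is to handle the three assertions in turn, reducing the first two to results already in hand and concentrating the work on uniqueness. First I would produce the trivialization: since $\tD = 0 \bmod h$ by Theorem \ref{MainFGHCI2} and $D$ is flat with $H^1_\Gamma(\T,D(M)) = 0$, Proposition \ref{Dfc} applies verbatim and yields a $\Gamma$-equivariant formal trivialization $P:\T\to\D_h(M)$, equal to $\Id$ mod $h$, with $D_V(P(f)) = 0$ and $P(\phi\s) = \phi^* P(\s)$. The $\s$-independence of $f\star_\s g = P_\s^{-1}(P_\s(f)\tBTstar_\s P_\s(g))$ is then exactly Proposition \ref{ftrdq}: by Lemma \ref{Deriv} the operator $D_V$ is a derivation of $\tBTstar_\s$, so $P_\s(f)\tBTstar_\s P_\s(g)$ is $D$-parallel; since $\T$ is contractible a $D$-parallel section is determined by its value at one point, and comparing with $\s\mapsto P_\s(f\star_{\s_0}g)$ shows $f\star_\s g$ is independent of $\s$. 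Write $\star$ for this common product.

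Next I would verify $\Gamma$-invariance by direct substitution, using two inputs: the equivariance $P_{\gamma\s}(\gamma^* w) = \gamma^*(P_\s(w))$ from Definition \ref{formaltrivi2}, and the naturality of the Berezin--Toeplitz product under the $\Gamma$-action, $(\gamma^* u)\tBTstar_{\gamma\s}(\gamma^* v) = \gamma^*(u\tBTstar_\s v)$, which is the general form of the $\Gamma_\s$-invariance recorded in the remark after Theorem \ref{tKS1}. Evaluating $\star$ at the base point $\gamma\s$ and inserting these two identities into $\gamma^* f\star\gamma^* g = P_{\gamma\s}^{-1}(P_{\gamma\s}(\gamma^* f)\tBTstar_{\gamma\s}P_{\gamma\s}(\gamma^* g))$, then using $P_{\gamma\s}^{-1}\circ\gamma^* = \gamma^*\circ P_\s^{-1}$, collapses the expression to $\gamma^*(f\star g)$. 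This is a short computation and I expect no obstruction here.

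The hard part is uniqueness, where the two extra hypotheses enter. My strategy is to compare an arbitrary $\Gamma$-invariant differential star product $\star'$ with $\star$ through the Berezin--Toeplitz family rather than directly: I would build a second $\Gamma$-equivariant, $D$-parallel family $P'_\s$ intertwining $\star'$ with $\tBTstar_\s$, that is $P'_\s(f\star' g) = P'_\s(f)\tBTstar_\s P'_\s(g)$, constructing it order by order starting from $P'=\Id\bmod h$. At each order the failure to extend $P'$ as a $\Gamma$-equivariant, $D$-parallel intertwiner is a cocycle whose class lies in $H^1_\Gamma(\T,C^\infty(M))$ — the flatness of $D$ together with the standing assumption $H^1(M,\bR)=0$ is what repackages the naive obstruction on $M$ into a class on $\T$ — so $H^1_\Gamma(\T,C^\infty(M)) = 0$ lets me solve for the next order. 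The residual freedom at each stage is a $\Gamma$-equivariant $D$-parallel self-equivalence of $\tBTstar_\s$; since $H^1(M,\bR)=0$ such a self-equivalence is inner, and triviality of the commutant of $\Gamma$ in $D(M)$ forces it to be a formal scalar, which the unit axiom forces to be $1$. Once $P'$ is built, $P$ and $P'$ are both $\Gamma$-equivariant trivializations of the single connection $D$; by the uniqueness of such a trivialization up to formal scale (resting on $H^1_\Gamma(\T,D(M))=0$ and on the constancy of $\Gamma$-invariant functions) we get $P' = cP$, and conjugation together with $1\star' f = f$ forces $c=1$, hence $\star' = \star$.

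I expect the genuine obstacle to be precisely this order-by-order analysis: identifying the obstruction cocycle explicitly and confirming that it represents a class in $H^1_\Gamma(\T,C^\infty(M))$, with no surviving $H^2(M)$-type characteristic-class obstruction, so that $\star'$ is in fact $D$-compatibly equivalent to the Berezin--Toeplitz family. This bookkeeping is where the interplay between the flat formal Hitchin connection $D$ and the hypothesis $H^1(M,\bR)=0$ carries the whole argument, and it is the step I would write out in the most detail.
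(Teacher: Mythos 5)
Your handling of the first two assertions is essentially what the paper does: existence of the $\Gamma$-equivariant trivialization is Proposition \ref{Dfc} (note, though, that global existence there also invokes $H^1(\T,\bR)=0$, which you replace by contractibility of $\T$ --- harmless in the intended application but not among the stated hypotheses), the $\s$-independence is Proposition \ref{ftrdq} via Lemma \ref{Deriv}, and the $\Gamma$-invariance computation from equivariance of $P$ and naturality of $\tBTstar_\s$ is routine. Beyond these supporting propositions the paper offers no argument at all --- the theorem, and in particular its uniqueness clause, is imported from \cite{A5} --- so the only substantive thing to assess is your uniqueness argument.

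That argument has a genuine gap. Your plan is to manufacture, order by order, a $\Gamma$-equivariant, $D$-parallel equivalence $P'$ intertwining an arbitrary $\Gamma$-invariant differential star product $\star'$ with the family $\tBTstar_\s$. But the primary obstruction to the existence of \emph{any} differential equivalence between two star products on $(M,\omega)$ is the difference of their characteristic (Fedosov--Deligne) classes in $H^2(M,\bR)[[h]]$, and nothing in the hypotheses kills $H^2(M,\bR)$: the standing assumption is $H^1(M,\bR)=0$, and for the moduli spaces $H^2(M,\bR)\neq 0$. You acknowledge this point only by asserting that there is ``no surviving $H^2(M)$-type characteristic-class obstruction,'' with no justification; yet this is essentially the entire content of the uniqueness claim, since once $\star'$ is known to be intertwined with $\tBTstar_\s$ by a $\Gamma$-equivariant parallel $P'$, the endgame (two equivariant trivializations differ by a $\s$-independent, $\Gamma$-invariant formal differential operator, which the commutant hypothesis forces to be a formal constant, and the unit axiom forces to be $1$) goes through as you describe. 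Similarly, the claim that the order-by-order obstruction ``is a cocycle whose class lies in $H^1_\Gamma(\T,C^\infty(M))$'' is asserted without a mechanism: the natural home of the obstruction to extending an equivalence of star products is cohomology of $M$ (Hochschild, hence de Rham, classes on $M$), and you would have to exhibit explicitly how it is traded for a $C^\infty(M)$-valued one-form on $\T$ before the hypothesis $H^1_\Gamma(\T,C^\infty(M))=0$ can be brought to bear. As written, the proposal assumes the crux of the statement rather than proving it, and the step you flag as ``bookkeeping'' is in fact the place where the argument is missing.
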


We calculate the first term of the equivariant formal trivialization
of the formal Hitchin connection. Let $f$ be any function on $M$ and
suppose that $P(f) = \sum_l \tilde f_l h^l$ is parallel with respect
to the formal Hitchin connection. Thus, we have that
\begin{align*}
  0 = D_V P(f) = h(V[\tilde f_1] - \frac{1}{4} \Delta_{\tilde G(V)}
  (\tilde f_0) + \frac{1}{2}\nabla_{\tilde G(V)dF}(\tilde f_0) +
  c^{(1)}(V[F], \tilde f_0)) + O(h^2).
\end{align*}
But $\tilde f_0 = f$, and so we get in particular
\begin{align}
  \label{eq:5}
  0 = V[\tilde f_1] - \frac{1}{4} \Delta_{\tilde G(V)} (f) +
  \frac{1}{2}\nabla_{\tilde G(V)dF}(f) + c^{(1)}(V[F], f).
\end{align}
By the results of \cite{KS}, $\BTstar$ is a differential star product
with seperation of variables, in the sense that it only differentiates
in holomorphic directions in the first entry and antiholomorphic
directions in the second. As argued in \cite{Kar}, all such star
products have the same first order coefficient, namely
\begin{align}\
  \label{eq:8}
  c^{(1)}(f_1, f_2) = -g(\partial f_1, \bar \partial f_2) = i
  \nabla_{X_{f_1}''} (f_2)
\end{align}
for any functions $f_1, f_2 \in C^\infty(M)$. From this, it is easily
seen that
\begin{align*}
  V[c^{(1)}](f_1, f_2) = \frac{1}{2} df_1 \tilde G(V) df_2 =
  \frac{1}{2} \nabla_{\tilde G(V) df_1} f_2.
\end{align*}
Applying this to \eqref{eq:5}, we see that
\begin{align}
  \label{eq:6}
  V[\tilde f_1] = \frac{1}{4} \Delta_{\tilde G(V)} (f) - V[c^{(1)}(F,
  f)].
\end{align}
But the variation of the Laplace-Beltrami operator is given by
\begin{align*}
  V[\Delta] f = V[\delta(g^{-1}d f)] = \delta(V[g^{-1}] df) = -
  \delta(\tilde G(V)df) = - \Delta_{\tilde G(V)} f,
\end{align*}
and so we conclude that
\begin{align*}
  V[\tilde f_1] = - V [\frac{1}{4} \Delta f + c^{(1)}(F, f)].
\end{align*}
We have thus proved
\begin{proposition}
  \label{prop:1}
  When it exists, the equivariant formal trivialization of the formal
  Hitchin connection has form
  \begin{align*}
    P = \id - h(\frac{1}{4} \Delta + i\nabla_{X''_F}) + O(h^2).
  \end{align*}
\end{proposition}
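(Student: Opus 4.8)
The plan is to impose the defining equation of a formal trivialization from Definition~\ref{formaltrivi2}, namely $D_V(P(f)) = 0$ for all vector fields $V$ on $\T$ and all $f$, and to solve it order by order in $h$. Since $P = \id \bmod h$, I write $P(f) = \sum_{l\ge 0}\tilde f_l\, h^l$ with $\tilde f_0 = f$, and substitute the explicit formula for the formal Hitchin connection furnished by Theorem~\ref{thm:1}. The zeroth order part of $D_V$ is simply $V[\,\cdot\,]$, so the coefficient of $h^0$ vanishes automatically, while the coefficient of $h^1$ yields a single linear relation expressing $V[\tilde f_1]$ in terms of $f$, the bivector $\tilde G(V)$, the Ricci potential $F$, and the leading Berezin--Toeplitz coefficient $c^{(1)}$; this is equation~\eqref{eq:5}.

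The next step is to recognize the right-hand side of that relation as a total $V$-derivative, which requires two inputs. First, using the closed expression~\eqref{eq:8} for $c^{(1)}$ one computes its variation $V[c^{(1)}](f_1,f_2) = \tfrac12\nabla_{\tilde G(V)df_1}f_2$; because the fixed function $f$ does not depend on $\s$, the two terms $\tfrac12\nabla_{\tilde G(V)dF}(f)$ and $c^{(1)}(V[F],f)$ in~\eqref{eq:5} then assemble into the single expression $V[c^{(1)}(F,f)]$, giving~\eqref{eq:6}. Second, I would compute the variation of the Laplace--Beltrami operator along the family of K\"ahler structures: since $V[g^{-1}] = -\tilde G(V)$ and $\delta(B\,df) = \Delta_B f$, one obtains $V[\Delta]f = -\Delta_{\tilde G(V)}f$. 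Substituting this converts the remaining $\tfrac14\Delta_{\tilde G(V)}(f)$ into $-V[\tfrac14\Delta f]$, so the whole relation collapses to $V[\tilde f_1] = -\,V\!\bigl[\tfrac14\Delta_\s f + c^{(1)}(F,f)\bigr]$.

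It remains to integrate this over $\T$. Since the combination $\tilde f_1 + \tfrac14\Delta_\s f + c^{(1)}(F,f)$ is annihilated by every vector field $V$ on $\T$, it is independent of $\s$; the residual $\s$-independent operator is precisely the ambiguity by which a formal trivialization fails to be unique. I would fix it using the normalization $P = \id \bmod h$ together with $\Gamma$-equivariance, invoking (as in Theorem~\ref{general}) that the commutant of $\Gamma$ in $\D(M)$ consists only of scalars, so that this operator may be normalized to zero. Finally, rewriting $c^{(1)}(F,f) = i\nabla_{X''_F}(f)$ via~\eqref{eq:8} gives $\tilde f_1 = -\bigl(\tfrac14\Delta_\s f + i\nabla_{X''_F}f\bigr)$, which is exactly the asserted form of $P$ modulo $h^2$.

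I expect the genuine content to lie not in the algebra but in the two structural identities that make it work: the variation formula $V[\Delta]f = -\Delta_{\tilde G(V)}f$ is what allows both $\tilde G(V)$-dependent terms to combine into an exact $V$-derivative, and without it the order-$h$ equation would not visibly integrate. The remaining delicate point is the clean treatment of the $\s$-independent \emph{constant of integration} in $\tilde f_1$: one must argue that equivariance and the normalization pin it down, so that the stated first-order term is unambiguous.
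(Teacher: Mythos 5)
Your proposal follows essentially the same route as the paper's own proof: expand $D_V P(f)=0$ to first order in $h$ using Theorem~\ref{thm:1} to get \eqref{eq:5}, combine the two $\tilde G(V)$-dependent terms into $V[c^{(1)}(F,f)]$ via the variation of the first-order coefficient \eqref{eq:8}, and use $V[\Delta]f=-\Delta_{\tilde G(V)}f$ to recognize the whole right-hand side as a total $V$-derivative. Your explicit treatment of the $\s$-independent constant of integration, pinned down by the normalization $P=\id \bmod h$ and $\Gamma$-equivariance, is a point the paper leaves implicit but is consistent with its hypothesis ``when it exists.''
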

Using this proposition, one easily calculates that
\begin{align*}
  P(f_1) \tBTstar P(f_2) &= f_1f_2 - h(\frac{1}{4} f_1 \Delta f_2 +
  \frac{1}{4} f_2 \Delta f_1 + i\nabla_{X''_F} f_1 + i\nabla_{X''_F}
  f_2) \\ & \quad + h c^{(1)}(f_1, f_2) + O(h^2).
\end{align*}
Finally, using the explicit formula \eqref{eq:8} for $c^{(1)}$, we get
that
\begin{align*}
  P^{-1}(P(f_1) \tBTstar P(f_2)) &= f_1f_2 - h g(\partial f_1,
  \bar \partial f_2) + \frac{1}{2}h g(df_1, df_2) + O(h^2) \\ &=
  f_1f_2 - h \frac{1}{2} (g(\partial f_1,
  \bar \partial f_2) - g(\bar \partial f_1, \partial f_2)) + O(h^2) \\
  &= f_1 f_2 - ih \frac{1}{2} \{f_1, f_2\} + O(h^2).
\end{align*}
This proves Theorem \ref{star}.

\nocite{*} \bibliographystyle{OUPnamed} \bibliography{biblist}

\end{document}